\newcounter{zaehler}
\theoremstyle{plain}
\newtheorem{introthm}[zaehler]{Theorem}
\newtheorem{introcor}[zaehler]{Corollary}
\newtheorem{thm}{Theorem}[section]
\newtheorem{lem}[thm]{Lemma}
\newtheorem{cor}[thm]{Corollary}
\newtheorem{prop}[thm]{Proposition}
\newtheorem{const}[thm]{Construction}
\theoremstyle{definition}
\newtheorem{defn}[thm]{Definition}
\newtheorem{claim}[thm]{Claim}
\newtheorem{rmk}[thm]{Remark}
\newcommand{\eq}[2]{\begin{equation}\label{#1}#2 \end{equation}}
\newcommand{\Hom}{{\rm Hom}}
\newcommand{\im}{{\rm im}}
\newcommand{\sA}{{\mathcal A}}
\newcommand{\sB}{{\mathcal B}}
\newcommand{\sC}{{\mathcal C}}
\newcommand{\sD}{{\mathcal D}}
\newcommand{\sF}{{\mathcal F}}
\newcommand{\sO}{{\mathcal O}}
\newcommand{\sU}{{\mathcal U}}
\newcommand{\sV}{{\mathcal V}}
\newcommand{\derived}[1]{\mathscr{#1}}
\newcommand{\dX}{\derived{X}}
\newcommand{\dY}{\derived{Y}}
\newcommand{\dZ}{\derived{Z}}
\newcommand{\dD}{\derived{D}}
\newcommand{\dE}{\derived{E}}
\newcommand{\dV}{\derived{V}}
\newcommand{\dho}{{h}}
\newcommand{\dU}{\derived{U}}
\newcommand{\dW}{\derived{W}}
\newcommand{\Bld}{\derived{B}\mathrm{l}}
\newcommand{\QCoh}{\mathrm{QCoh}}
\newcommand{\Perf}{\mathrm{Perf}}
\newcommand{\Sch}{\mathrm{Sch}}
\newcommand{\Ring}{\mathrm{Ring}}
\newcommand{\cRing}{\mathrm{sRing}}
\newcommand{\Cat}{\mathrm{Cat}}
\newcommand{\cat}[1]{\mathbf{#1}}
\newcommand{\sRing}{\cat{sRing}}
\newcommand{\dSch}{\cat{dSch}}
\newcommand{\dAff}{\cat{dAff}}
\newcommand{\dMod}{\mathrm{Mod}}
\newcommand{\A}{{\mathbb A}}
\newcommand{\N}{{\mathbb N}}
\renewcommand{\P}{{\mathbb P}}
\newcommand{\Z}{{\mathbb Z}}
\newcommand{\id}{{\rm id\hspace{.1ex}}}
\DeclareMathOperator{\Ind}{Ind}
\newcommand{\Sp}{{\rm Sp}}
\newcommand{\prolim}[1]{\operatorname{``}\lim\limits_{#1}   \operatorname{''}}
\DeclareMathOperator{\Proj}{Proj}
\DeclareMathOperator{\Tor}{Tor}
\newcommand{\GL}{{\rm GL}}
\newcommand{\Bl}{{\rm Bl}}
\DeclareMathOperator{\coker}{coker}
\DeclareMathOperator*{\colim}{colim}
\DeclareMathOperator{\Spec}{Spec}
\DeclareMathOperator{\Ho}{Ho}
\DeclareMathOperator{\Mod}{Mod}
\newcommand{\ba}{\mathbf{a}}
\newcommand{\Ko}{\mathrm{Ko}}
\newcommand{\GLh}{\widehat{\mathrm{GL}}}
\newcommand{\BGL}{\mathrm{BGL}}
\newcommand{\BGLh}{\mathrm{B\widehat{GL}}}
\newcommand{\rB}{\mathrm{B}}
\newcommand{\hofib}{\mathrm{hofib}}
\newcommand{\rM}{\mathrm{M}}
\newcommand{\rE}{\mathrm{E}}
\newcommand{\rT}{\mathrm{T}}
\newcommand{\rG}{\mathrm{G}}
\newcommand{\cdh}{\mathrm{cdh}}
\newcommand{\ra}{\mathrm{a}}
\newcommand{\rL}{\mathrm{L}}
\newcommand{\Nil}{\mathrm{Nil}}
\newcommand{\Coh}{\mathrm{Coh}}
\newcommand{\Vect}{\mathrm{Vec}}
\newcommand{\frY}{\mathfrak{Y}}
\newcommand{\frU}{\mathfrak{U}}
\numberwithin{equation}{section}
\begin{document}
\date{\today}

\title{Algebraic \texorpdfstring{$K$}{K}-theory and descent for blow-ups} 

\author{Moritz Kerz}
\address{Fakult\"at f\"ur Mathematik, Universit\"at Regensburg, 93040 Regensburg, Germany}
\email{moritz.kerz@ur.de}
\author{Florian Strunk}
\email{florian.strunk@ur.de}
\author{Georg Tamme}
\email{georg.tamme@ur.de}

\thanks{The authors are supported by the DFG through CRC 1085 \emph{Higher Invariants} (Universit\"at Regensburg)}

\begin{abstract}
We prove that algebraic $K$-theory satisfies `pro-descent' for abstract blow-up squares of noetherian schemes.
As an application we derive Weibel's conjecture on the vanishing of negative $K$-groups.
\end{abstract}

\maketitle

\setcounter{tocdepth}{2}

\section{Introduction}
\label{sec:Introduction}

Let $X$ be a noetherian scheme. A cartesian diagram of schemes
\begin{equation}\label{intro.eq1}
\begin{split}
\xymatrix{
\tilde X \ar[d] & E \ar[l] \ar[d] \\
X & \ar[l] Y
}
\end{split}
\end{equation}
is called an \emph{abstract blow-up square} if $\tilde X \to X$ is proper, $Y \to X$ is a
closed immersion, and the induced morphism $\tilde X \setminus E \to X \setminus Y$ is an
isomorphism.
It is an interesting question to ask whether algebraic $K$-theory satisfies descent for
the abstract blow-up square~\eqref{intro.eq1}, i.e.~whether there exists a long exact
sequence of algebraic $K$-groups
\begin{equation}\label{intro:mayervietoris}
\cdots \to K_i(X) \to K_i(Y) \oplus K_i(\tilde X) \to  K_i(E) \to K_{i-1}(X) \to \cdots .
\end{equation}

Historically, one of the first results in this direction states that for $X$ affine and $\tilde X\to X$ finite there is an exact sequence
\begin{equation}\label{intro.eq11}
 K_1(E) \to K_{0}(X) \to K_0(Y) \oplus K_0(\tilde X)  \to K_0(E) \to K_{-1}(X)\to \cdots .
\end{equation}
This is a consequence of the classical excision theorem of Bass
\cite[Thm.~XII.8.3]{Bass68} and of the Artin--Rees lemma, see the proof of Proposition~\ref{pr.findesc}. 

Bass \cite[Thm.~XII.10.4]{Bass68} used the exact sequence~\eqref{intro.eq11} to calculate the negative $K$-groups of an
 affine, one-dimensional, Nagata scheme $X$. Indeed, applying for such $X$ the exact
sequence~\eqref{intro.eq11} for the normalization $\tilde X$ of
$X_{\rm red}$ and a suitable $0$-dimensional subscheme $Y$ of $X_{\rm red}$ one obtains that $K_{i}(X)=0$ for $i<-1$ and that $K_{-1}(X)$ is a finitely
generated free
abelian group depending only on ``combinatorial'' data associated with $X$. For  higher
dimensional analogs of these two observations see Theorem~\ref{intro.thm.wc} and Corollary~\ref{intro.cor} below.

Another important descent result for $K$-theory of blow-up squares is due to Thomason~\cite{Thomason-blowup}. He shows that
$K$-theory satisfies descent for squares \eqref{intro.eq1} in which $Y\to X$ is a
regular closed immersion and  $\tilde X$  is the blow-up of $X$ in $Y$.
However, in general descent for $K$-theory of an abstract
blow-up square does not hold. 

In view of the following  observation of Grothendieck about coherent
sheaf cohomology it  is natural to ask for a pro-version of the descent
sequence \eqref{intro:mayervietoris} as formulated in our main Theorem~\ref{intro.main.thm} below.
Denote by $Y_{n}$ respectively $E_{n}$ the $n$-th infinitesimal thickening of $Y$ in
$X$ respectively $E$ in $\tilde X$. 
In the setting of the abstract blow-up square~\eqref{intro.eq1} Grothendieck's theorem on formal functions
\cite[III.4.1]{EGA} says that for any coherent sheaf $\sF$ on $X$ the square
\begin{equation}\label{into.eq1}
\begin{split}
\xymatrix{
  R\Gamma (X, \sF_{X}) \ar[r] \ar[d] & \prolim{n} R\Gamma ( Y_n, \sF_{ Y_n}) \ar[d]  \\
R\Gamma (\tilde X, \sF_{\tilde X}) \ar[r]  & \prolim{n} R\Gamma ( E_n, \sF_{ E_n})  
}
\end{split}
\end{equation}
is homotopy cartesian in the sense of pro-systems, 
i.e.\ it induces a long exact sequence of cohomology pro-groups. Here for a scheme $T$ over $X$ we write $\sF_T$ for the pullback of $\sF$ to $T$.

Our main result says that the analog of the square~\eqref{into.eq1} for
algebraic \mbox{$K$-theory} is homotopy cartesian.
\begin{introthm}\label{intro.main.thm}
For any abstract blow-up square~\eqref{intro.eq1} the diagram of pro-spectra of non-connective algebraic $K$-theory
\begin{equation}\label{into.eq2}
\begin{split}
\xymatrix{
K(X) \ar[r] \ar[d] & \prolim{n} K(Y_{n}) \ar[d] \\
K(\tilde X) \ar[r] & \prolim{n} K(E_{n})
}
\end{split}
\end{equation}
is  homotopy cartesian.
\end{introthm}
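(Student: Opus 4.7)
The plan is to reduce the theorem to a pro-excision statement for algebraic $K$-theory, via derived-geometric techniques.

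First, by Nisnevich descent for non-connective $K$-theory (Thomason--Trobaugh) and the fact that Nisnevich covers of $X$ restrict compatibly to Nisnevich covers of every infinitesimal thickening $Y_n$ and $E_n$, the descent passes through the pro-limit. This reduces the problem to the case where $X=\Spec(A)$ is affine.

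Second, I would introduce the derived blow-up $\Bld_Y X$, whose underlying classical scheme is $\Bl_Y X$ but whose exceptional divisor carries derived structure reflecting the possibly non-regular embedding $Y\hookrightarrow X$. The derived blow-up satisfies a projective bundle formula that extends Thomason's theorem~\cite{Thomason-blowup} to arbitrary closed immersions, so pro-descent for the \emph{derived} blow-up square is accessible directly. Comparing the abstract blow-up $\tilde X\to X$ with $\Bld_Y X\to X$ via their fiber product, combined with noetherian induction on $\dim Y$, should then reduce the general case to the case in which $\tilde X\to X$ is finite.

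Third, the finite affine case amounts to a pro-excision statement: for a finite ring map $A\to A'$ inducing an isomorphism $A/I\xrightarrow{\sim} A'/IA'$, the pro-square
\[\xymatrix{K(A) \ar[r]\ar[d] & \prolim{n} K(A/I^n) \ar[d] \\ K(A') \ar[r] & \prolim{n} K(A'/I^n A')}\]
should be homotopy cartesian. Bass's classical excision~\cite[Thm.~XII.8.3]{Bass68} combined with Artin--Rees handles degrees $\le 0$, as already recalled for the sequence~\eqref{intro.eq11}, but extending to all $K$-degrees at the spectrum level is nontrivial.

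The principal obstacle is precisely this extension, which goes beyond the Tor-unital hypothesis of the Suslin--Wodzicki and Geisser--Hesselholt excision theorems. I expect the resolution is to prove pro-excision in a derived-geometric framework, replacing the classical quotients $A/I^n$ by suitable derived thickenings where Tor-vanishing is automatic, and then to show that the classical and derived pro-systems of thickenings induce pro-isomorphic systems in $K$-theory under the noetherian hypothesis. Establishing this derived pro-excision theorem and its compatibility with classical thickenings is the heart of the argument.
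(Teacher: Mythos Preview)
Your outline captures the right ingredients---derived blow-ups, pro-excision, and the comparison of derived vs.\ classical thickenings---but there is a genuine gap at the derived blow-up step. The Thomason-style descent theorem for the derived blow-up (Theorem~\ref{thm:descent-for-derived-blow-up} in the paper) gives a homotopy cartesian square only at a \emph{single} level: $K(X) \to K(\dY)$, $K(\tilde\dX) \to K(\dE)$. It does \emph{not} directly give pro-descent over the thickenings $\dY(n)$, $\dE(n)$, so your claim that ``pro-descent for the derived blow-up square is accessible directly'' is where the argument breaks. The paper's solution is a \emph{tower of derived blow-ups}: for each $n$ one forms the derived blow-up $\tilde\dX(2^n) = \Bld_{\ba(2^n)} X$ in the $2^n$-th powers of the sequence, observes that there are finite transition maps $\tilde\dX(2^n) \to \tilde\dX(2^{n+1})$, applies Theorem~\ref{thm:descent-for-derived-blow-up} at each level of the tower, and then takes the pro-system over $n$. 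Only after this tower trick does pro-excision for simplicial rings (Corollary~\ref{ex.corex2}) let one pass from the derived to the classical pro-systems. This tower is the missing key lemma.

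Two further points. First, the underlying classical scheme of $\Bld_{\ba} X$ is \emph{not} $\Bl_{(\ba)} X$; one only has a closed immersion $\Bl_{(\ba)} X \hookrightarrow t(\Bld_{\ba} X)$ (Remark~\ref{rmk:ordinary-and-derived-blow-up}), and the discrepancy is handled via the finite case (Proposition~\ref{pr.findesc}). Second, your reduction from a general abstract blow-up to the blow-up/finite case via ``fibre product and noetherian induction on $\dim Y$'' is not how the paper proceeds: it uses Raynaud--Gruson's \emph{platification par \'eclatement} to find a blow-up $\Bl_{Y'} X \to X$ factoring through $\tilde X$, then runs a mono/epi argument (Subsection~\ref{ex.sub.redcl}). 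Your pro-excision discussion in the final paragraph, by contrast, is on target: the paper does exactly what you predict, proving pro-excision for simplicial rings (Theorem~\ref{ex.mainthm}) and using the Koszul derived thickenings, whose pro-system is pro-equivalent to the classical one (Lemma~\ref{ex.lemmodid}).
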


The notion of  homotopy cartesian square of pro-spectra we use, see
Definition~\ref{ex.defn.prospec}, is slightly weaker than the one resulting from the model
category structure of \cite{Isaksen04}. However, the squares~\eqref{into.eq1}
and~\eqref{into.eq2} are homotopy cartesian for both notions, see Remark~\ref{ex.rmkcohoca}.

More concretely, Theorem~\ref{intro.main.thm} is equivalent to one of the following two equivalent
statements.
\begin{itemize}
\item[(i)]
The canonical morphism of pro-systems of relative $K$-groups
\[
\prolim{n} K_i(X,Y_n) \to \prolim{n} K_i(\tilde X,E_n)
\]
is an isomorphism for all $i\in \Z$.
\item[(ii)]
There exists a natural long exact
sequence of pro-groups
\begin{multline}\label{into.eq3}
\cdots \to K_i(X) \to \prolim{n}  K_i(Y_{n}) \oplus K_i(\tilde X) \to\\\to   \prolim{n}
K_i(E_{n}) \to K_{i-1}(X) \to \cdots .
\end{multline}
\end{itemize}
In particular, Theorem~\ref{intro.main.thm} can be seen as a generalization of Bass' exact
sequence~\eqref{intro.eq11} for a finite abstract blow-up, since for affine schemes the
non-positive $K$-groups  only depend on the underlying reduced scheme.

Morrow has shown Theorem~\ref{intro.main.thm} for certain schemes in characteristic zero,
comprising varieties in characteristic zero, and for varieties in positive characteristic
under a strong assumption on resolution of singularities~\cite{Mor16}.
His approach is to
decompose the problem into cdh-descent for infinitesimal $K$-theory, which is due to
Haesemeyer \cite{Haes}, and into the analogous descent property for (topological) cyclic homology,
which is based on the homotopy cartesian square~\eqref{into.eq1}.

\smallskip

Combining Theorem~\ref{intro.main.thm} with the techniques of \cite{KerzStrunk16} we
obtain a proof of Weibel's conjecture on the vanishing of negative $K$-groups, formulated
as a question in~\cite[Qu.~2.9]{Wei80}.

\begin{introthm}\label{intro.thm.wc}
For a noetherian scheme $X$ of dimension $d<\infty$ the following hold.
\begin{itemize}
\item[(i)]\label{intro.thm.wc.i}
For $i<-d$ we have $K_i(X)=0$.
\item[(ii)]\label{intro.thm.wc.ii}
For $i\le -d$ and any integer $r\ge 0$ the map
\[
K_i(X) \xrightarrow{\cong} K_i(\mathbb A^r_X) 
\] 
is an isomorphism.
\end{itemize}
\end{introthm}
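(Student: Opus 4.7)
The plan is to deduce the theorem from Theorem~\ref{intro.main.thm} by induction on $d = \dim X$, combined with the techniques of \cite{KerzStrunk16}.

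For the base case $d = 0$, the scheme $X$ is a finite disjoint union of spectra of artinian local rings. Pro-nilpotent invariance of $K$-theory---itself a special case of Theorem~\ref{intro.main.thm} applied to the square with $\tilde X = Y = E = X_\red$---reduces the claim to the classical vanishing $K_i(k) = 0$ for a field $k$ and $i < 0$, together with the Bass--Heller--Swan isomorphism $K_0(k) \xrightarrow{\cong} K_0(\A^r_k)$.

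For the inductive step, suppose both (i) and (ii) hold in all dimensions $<d$, and let $X$ be noetherian of dimension $d$; we may assume $X$ reduced. By generic regularity there is a dense open regular $U\subseteq X$; set $Y = X\setminus U$ with the reduced structure, so that $\dim Y \le d-1$. Pick a proper birational $\tilde X\to X$ that is an isomorphism over $U$---for instance a blow-up of $X$ along an ideal supported on $Y$---so that $E = \tilde X\times_X Y$ has dimension $\le d-1$ as well. Theorem~\ref{intro.main.thm} then yields the pro-long-exact sequence
\begin{equation*}
\cdots \to K_i(X) \to \prolim{n} K_i(Y_n) \oplus K_i(\tilde X) \to \prolim{n} K_i(E_n) \to K_{i-1}(X) \to \cdots .
\end{equation*}
Since $\dim Y_n, \dim E_n \le d-1$, the inductive hypothesis forces $\prolim{n} K_i(Y_n) = \prolim{n} K_i(E_n) = 0$ for $i<-d$, yielding $K_i(X) \cong K_i(\tilde X)$ in that range; a parallel sequence for $\A^r_X$ together with a diagram chase reduces part (ii) on $X$ to part (ii) on $\tilde X$.

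The main obstacle is that $\tilde X$ itself need not be regular---resolution of singularities being unavailable in positive characteristic---so one has to propagate the vanishing back to $\tilde X$ by other means. I would do this by a nested noetherian induction on the non-regular locus: iterate the above construction, replacing $X$ with $\tilde X$ and shrinking the singular locus at each step; the process terminates at a regular noetherian scheme, for which Quillen's theorems supply (i) and (ii) directly. The pro-formalism of Theorem~\ref{intro.main.thm}, being robust under the nilpotent phenomena that obstruct resolution, is precisely the ingredient that makes this iteration close off and extends the strategy of \cite{KerzStrunk16} from its previous scope to arbitrary noetherian $X$.
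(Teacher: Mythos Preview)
Your inductive setup and the use of Theorem~\ref{intro.main.thm} to produce the pro-exact sequence are correct, and you correctly deduce from the inductive hypothesis on $Y_n,E_n$ that $K_i(X)\cong K_i(\tilde X)$ for $i<-d$. The genuine gap is the last paragraph: the proposed ``nested noetherian induction on the non-regular locus'' does not terminate. Blowing up a noetherian scheme along (an ideal supported on) its singular locus need not shrink the singular locus, neither in dimension nor in any other well-founded sense; producing a tower of blow-ups that eventually reaches a regular scheme is exactly resolution of singularities, which you yourself flag as unavailable. There is also no noetherian induction to appeal to, since the successive $\tilde X$'s are new schemes, not closed subschemes of a fixed ambient one.

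The paper sidesteps this completely. Rather than seeking a regular model of $X$, it works one class at a time: given $\xi\in K_i(X)$ with $i<-d$ (after reducing to $X$ affine and reduced), it invokes \cite[Prop.~5]{KerzStrunk16}---whose input is Raynaud--Gruson's \emph{platification par \'eclatement}, not resolution---to produce a projective birational $p\colon X'\to X$, depending on $\xi$, with $p^*(\xi)=0\in K_i(X')$. The pro-exact sequence from Theorem~\ref{intro.main.thm}, together with the inductive vanishing on $Y_n$ and $Y'_n=E_n$, then shows only that $p^*\colon K_i(X)\to K_i(X')$ is \emph{injective}; combined with $p^*(\xi)=0$ this gives $\xi=0$. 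No global information about $K_i(X')$ is ever needed, and no iteration of blow-ups occurs. Part~(ii) follows the same template, applying \cite[Prop.~5]{KerzStrunk16} to the projection $\A^r_X\to X$ to kill a given class in $K_i(\A^r_X)$ and then chasing the resulting diagram of pro-exact sequences.
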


Weibel's conjecture, i.e.\ Theorem~\ref{intro.thm.wc}, has been known under one of the
following additional assumptions:
\begin{itemize}
\item[(i)] $d\le 1$, as can be deduced from \cite[Thm.~XII.10.4]{Bass68} (see the explanations in the paragraph following \eqref{intro.eq11} above),
\item[(ii)] $d=2$ and $X$ excellent \cite[Thm.~4.4]{Wei01},
\item[(iii)] $X/k$ a variety over a field $k$ with ${\rm ch}(k)=0$ \cite{CHSW},
\item[(iv)] $X/k$  a variety such that a strong form of resolution of singularities holds over
  the field $k$ \cite{GeiHe}, \cite{Krish} (only part of
Theorem~\ref{intro.thm.wc}). 
\end{itemize}

As further evidence, Kelly showed in~\cite{Kelly} that $K_i(X)\otimes \mathbb{Z}[1/p]=0$
for  $i<-d$ and  a
quasi-excellent scheme $X$ of dimension $d$  on which the prime $p$ is nilpotent; see also~\cite{KerzStrunk16} for
another proof of this result, which is very similar to the proof of
Theorem~\ref{intro.thm.wc} given in Subsection~\ref{subsec:weibel}.

Combining Theorem~\ref{intro.main.thm} and Theorem~\ref{intro.thm.wc} with a simple spectral
sequence argument we also obtain a new proof of cdh-descent for homotopy
$K$-theory, originally due to
Cisinski~\cite{Cisi13}. His proof relies on proper base change in stable motivic homotopy
theory. 

\begin{introthm}\label{intro.thm.kh}
For any abstract blow-up square~\eqref{intro.eq1} with $X$ finite dimensional the square of
homotopy $K$-theory spectra
\[
\xymatrix{
KH(X) \ar[r] \ar[d] &  KH(Y) \ar[d] \\
KH(\tilde X) \ar[r] & KH(E)
}
\]
is homotopy cartesian.
\end{introthm}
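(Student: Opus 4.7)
\emph{Proof plan.} The plan is to deduce the statement from Theorems~\ref{intro.main.thm} and~\ref{intro.thm.wc} via the standard simplicial model $KH(X) = |K(X \times \Delta^\bullet)|$, where $\Delta^\bullet$ denotes the cosimplicial affine scheme. For each $p \geq 0$, the product of the abstract blow-up square~\eqref{intro.eq1} with $\Delta^p$ is again an abstract blow-up square, whose $n$-th infinitesimal thickenings are $Y_n \times \Delta^p$ and $E_n \times \Delta^p$. Defining
$$F_p^{(n)} := \hofib\bigl(K(X \times \Delta^p) \longrightarrow K(Y_n \times \Delta^p) \times^h_{K(E_n \times \Delta^p)} K(\tilde X \times \Delta^p)\bigr),$$
Theorem~\ref{intro.main.thm} asserts that for each fixed $p$, the pro-spectrum $\{F_p^{(n)}\}_n$ is pro-zero.

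Let $F^{(n)} := |F_\bullet^{(n)}|$ denote the realization in $p$. The nil-invariance of $KH$ (a consequence of its $\A^1$-invariance, applied to the nilpotent thickenings $Y \hookrightarrow Y_n$ and $E \hookrightarrow E_n$) implies $KH(Y_n) \simeq KH(Y)$ and $KH(E_n) \simeq KH(E)$ for every $n$; hence $F^{(n)}$ is independent of $n$ up to equivalence and equals the total fiber
$$F = \hofib\bigl(KH(X) \longrightarrow KH(Y) \times^h_{KH(E)} KH(\tilde X)\bigr)$$
that we wish to prove contractible. Thus the pro-system $\{F^{(n)}\}_n$ is essentially constant at $F$, and Theorem~\ref{intro.thm.kh} reduces to showing $F \simeq 0$.

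The simple spectral sequence argument alluded to in the introduction is the Bousfield--Friedlander spectral sequence
$$E^1_{p,q}(n) = \pi_q F_p^{(n)} \Longrightarrow \pi_{p+q} F^{(n)}$$
for the simplicial spectrum defining $F^{(n)}$. Setting $d = \dim X$ and noting $\dim(W \times \Delta^p) \le \dim W + p$ for each scheme $W$ in the square, Theorem~\ref{intro.thm.wc} yields the vanishing $E^1_{p,q}(n) = 0$ for $q < -(d+p)$. This dimensional control provides the strong convergence needed to lift pro-vanishing of each $E^1_{p,q}(n)$ (established by Theorem~\ref{intro.main.thm}) to pro-vanishing of the abutment $\pi_{*} F^{(n)}$. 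Combined with the essential constancy at $F$, this forces $F \simeq 0$.

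The hard part will be the controlled interchange of the geometric realization in $p$ with the pro-limit over $n$: although each $E^1$-layer is pro-zero, the filtration on the abutment need not be finite, and one must verify that the vanishing range coming from Theorem~\ref{intro.thm.wc} is sharp enough to transport pro-vanishing through the filtration in each total degree. This is precisely the role of Weibel's conjecture, and it is what distinguishes the finite-dimensional hypothesis on $X$ from a purely formal descent argument.
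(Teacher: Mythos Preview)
Your plan matches the paper's proof in structure: form the simplicial birelative spectrum $p\mapsto F_p^{(n)}$, realize to get the $KH$-total-fibre (which is independent of $n$ by nil-invariance), apply Theorem~\ref{intro.main.thm} levelwise so that $\prolim{n}F_p^{(n)}\simeq *$ for each $p$, and then transport this through the spectral sequence of the realization.

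The gap is at the convergence step, and it is precisely the point you yourself flag. The bound you extract, $E^1_{p,q}(n)=0$ for $q<-(d+p)$, comes from Theorem~\ref{intro.thm.wc}(i) applied to the schemes $W\times\Delta^p$ of dimension $\le\dim W+p$. But this condition is equivalent to $E^1_{p,q}=0$ for $p+q<-d$: for any fixed total degree $m\ge -d$ the condition $m-p\ge -(d+p)$ is automatic, so \emph{every} column $p=0,1,2,\ldots$ can contribute. The filtration on $\pi_m F^{(n)}$ is then genuinely infinite, and pro-vanishing of the individual $E^1$-terms does not propagate to the abutment. Thus the vanishing range you wrote down is not sharp enough, and the argument as stated does not close.

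What is missing is part~(ii) of Theorem~\ref{intro.thm.wc}. The $K_{-\dim W}$-regularity says $K_i(W\times\Delta^p)\cong K_i(W)$ for $i\le -\dim W$, hence $K_i(W\times\Delta^p)=0$ for $i<-\dim W$ \emph{uniformly in $p$}. Feeding this through the two fibre sequences that build $F_p^{(n)}$ out of the four $K$-theory spectra yields a bound $\pi_q F_p^{(n)}=0$ for all $q$ below a constant independent of $p$ (the paper obtains $q<-d-2$ in Lemma~\ref{app.lemvan}(ii)). Now for each total degree $m$ only the finitely many columns $0\le p\le m+d+2$ survive, the spectral sequence has finite filtration uniformly in $n$, and the interchange of pro-limit and realization goes through.
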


As a Corollary to Theorem~\ref{intro.thm.wc} and Theorem~\ref{intro.thm.kh} we get a
description of the lowest potentially non-vanishing $K$-group in terms of {\em
  cdh-cohomology}, see \cite[Def.~5.7]{SV}.

\begin{introcor}\label{intro.cor}
For a noetherian scheme $X$ of dimension $d<\infty$ there is a canonical isomorphism
\[
H^d_{\cdh}(X,\Z) \cong K_{-d}(X).
\]
\end{introcor}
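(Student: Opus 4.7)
The strategy is to bridge $K_{-d}(X)$ and $H^d_{\cdh}(X,\Z)$ through the common intermediate $KH_{-d}(X)$, invoking Theorem~\ref{intro.thm.wc} for the first identification and Theorem~\ref{intro.thm.kh} for the second.

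For the identification $K_{-d}(X) \cong KH_{-d}(X)$, I would use the spectral sequence arising from the simplicial definition $KH(X) = |K(\Delta^\bullet_X)|$,
$$E^1_{p,q} = K_q(\Delta^p_X) \Longrightarrow KH_{p+q}(X).$$
On the line $p+q = -d$, only the entry at $(p,q) = (0,-d)$ survives: for $p > 0$ the entry $K_{-d-p}(\Delta^p_X) = K_{-d-p}(\A^p_X)$ identifies with $K_{-d-p}(X)$ by Theorem~\ref{intro.thm.wc}(ii) and vanishes by Theorem~\ref{intro.thm.wc}(i). Thus the rows for $q < -d$ are zero and the row for $q = -d$ is constant with value $K_{-d}(X)$, so the edge map provides the isomorphism $K_{-d}(X) \cong KH_{-d}(X)$.

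For the identification $KH_{-d}(X) \cong H^d_{\cdh}(X,\Z)$, Theorem~\ref{intro.thm.kh} combined with the Nisnevich descent property of $K$-theory yields cdh-descent for $KH$, and hence a descent spectral sequence
$$E_2^{p,q} = H^p_{\cdh}(X,\mc{F}_q) \Longrightarrow KH_{q-p}(X),$$
where $\mc{F}_q$ denotes the cdh-sheafification of $U \mapsto KH_q(U)$. The next step is to identify $\mc{F}_0 \cong \Z$ (since $K_0$ is $\Z$ on connected local schemes) and to show $\mc{F}_q = 0$ for $q < 0$ by applying Theorem~\ref{intro.thm.wc} at cdh-points, on which negative $K$-theory vanishes. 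Combined with the cohomological dimension bound $H^p_{\cdh}(X,-) = 0$ for $p > d$ on a $d$-dimensional noetherian scheme (see~\cite{SV}), the only surviving term on the antidiagonal $q-p = -d$ is $E_2^{d,0} = H^d_{\cdh}(X,\Z)$; since differentials into and out of this spot are forced to land in (or come from) groups that vanish for the same reasons, we obtain $KH_{-d}(X) \cong H^d_{\cdh}(X,\Z)$, and combining with the first step completes the proof.

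The main obstacle is the vanishing of the cdh-sheaves $\mc{F}_q$ for $q < 0$: this is a cdh-local version of Weibel's vanishing and requires a careful analysis of cdh-points (most naturally handled via henselian valuation rings, on which negative $K$-theory is known to vanish), or alternatively an induction on dimension using Theorem~\ref{intro.thm.wc} together with suitable cdh-hypercovers.
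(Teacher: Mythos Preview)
Your overall strategy matches the paper's: pass through $KH_{-d}(X)$, using Theorem~\ref{intro.thm.wc} for the first isomorphism and cdh-descent (Theorem~\ref{intro.thm.kh}) for the second. The first step is handled exactly as in the paper (point~(iii) in Subsection~\ref{subsec.cdh}).

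For the second step, the paper sidesteps your ``main obstacle'' by a cleaner setup: rather than sheafifying $KH_{q}$ or $K_{q}$, it proves (Theorem~\ref{subsec.cdh.thm}) that the cdh-sheafification of \emph{connective} $K$-theory $\tilde K$ is already equivalent to $KH$. The resulting descent spectral sequence then has $E_{2}^{pq}=H^{p}_{\cdh}(X,\ra_{\cdh}\tilde K_{-q})$, and the vanishing for $q>0$ is automatic since $\tilde K$ is connective. Combined with $\ra_{\cdh}\tilde K_{0}\cong\Z$ (already true Zariski-locally) and the cohomological dimension bound from~\cite{SV}, only $E_{2}^{d,0}=H^{d}_{\cdh}(X,\Z)$ contributes to $KH_{-d}(X)$.

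Of course, proving $\rL_{\cdh}\tilde K\simeq KH$ still requires showing $\ra_{\cdh}K_{i}=0$ for $i<0$, which is exactly your obstacle. The paper does \emph{not} do this via cdh-points. Instead it argues by induction on $\dim(Y)$ that $K_{i}(Y)\to\ra_{\cdh}K_{i}(Y)$ is zero for affine $Y$ and $i<0$: given $\xi\in K_{i}(Y)$, \cite[Prop.~5]{KerzStrunk16} (a consequence of platification par \'eclatement) produces a projective birational $Y'\to Y$ killing $\xi$; together with a lower-dimensional closed center $Z$ this gives a cdh-cover $Y'\amalg Z\to Y$ on which $\xi$ dies by induction. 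Your suggestion~(b) is close in spirit, but Theorem~\ref{intro.thm.wc} alone is not enough---it only gives vanishing below $-\dim$, whereas you need it for all $i<0$; the extra input is precisely the class-killing result from~\cite{KerzStrunk16}. Your suggestion~(a) via henselian valuation rings would require identifying cdh-points and proving their negative $K$-theory vanishes, neither of which is available within the paper (and note Theorem~\ref{intro.thm.wc} does not apply, as valuation rings are typically non-noetherian).
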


For $X$ a variety in characteristic zero Corollary~\ref{intro.cor} is shown
in~\cite{CHSW}. In the course of proving it in Subsection~\ref{subsec.cdh} we also generalize the main result
of~\cite{Haes}, i.e.~the spectral sequence
\[
E^{p q}_2 = H^p_\cdh (X, \ra_\cdh K_{-q} ) \Rightarrow KH_{-p-q}(X),
\]
to any noetherian scheme $X$ of finite dimension, see Theorem~\ref{subsec.cdh.thm}. Here $\ra_\cdh$ is the
cdh-sheafification.

\subsection*{Overview}

In the proof of Theorem~\ref{intro.main.thm}
we essentially use two new methods. The first is a generalization of Thomason's descent
for algebraic $K$-theory of blow-ups in regularly immersed centers to the case of 
derived blow-ups, see Theorem~\ref{thm:descent-for-derived-blow-up}. The second is a
version of pro-excision for the $K$-theory of simplicial rings which allows us to pass from derived schemes back
to ordinary schemes, see Corollary~\ref{ex.corex2}.

Here is an overview of the proof of Theorem~\ref{intro.main.thm}.
Given a noetherian ring
$A'$, $X'=\Spec A'$, and a
regular sequence $\mathbf a'= (a'_0,\ldots , a'_r)\in (A')^{r+1}$ we consider the blow-up
$\tilde X'(2^n)$ 
with respect to the closed subscheme $Y'(2^n)$ corresponding to the ideal generated by
$(a'_0)^{2^n}, \ldots , (a'_r)^{2^n}$ and the blow-up square
\begin{equation}\label{intro.eq44}
\begin{split}
\xymatrix{
\tilde X'(2^n) \ar[d] &  \ar[l] E'(2^n) \ar[d]\phantom{.} \\ 
X' & \ar[l] Y'(2^n).
}
\end{split}
\end{equation}
For varying $n$ we get canonical finite transition morphisms from the $2^{n}$-square
to the $2^{n+1}$-square, so we get a whole tower of blow-up squares. We now consider an affine
noetherian scheme $X$ and take both the derived and the ordinary  pullback of the square~\eqref{intro.eq44} along a
morphism $X\to X'$. This derived pullback is the front square and the ordinary pullback is
the back square of  the commutative diagram
\begin{equation}\label{intro.eq45}
\begin{split}
 \xymatrix@R=2.5ex@C=1.2ex{ 
 \tilde X(2^n) \ar[dd] \ar[dr] &&  E(2^n) \ar[dr] \ar'[d][dd] \ar[ll] &  \\
& \tilde \dX(2^n)  \ar[dd]  &&   \ar[ll] \ar[dd]  \dE(2^n)\phantom{.}  \\
X  \ar@{=}[dr]   & & Y(2^n) \ar[dr]^\alpha \ar'[l][ll] &  \\
&  X &&  \ar[ll] \dY(2^n).  
}
\end{split}
\end{equation}
In this overview we would like to explain the proof of Theorem~\ref{intro.main.thm} for the
blow-up square
\begin{equation}\label{intro.eq46}
\begin{split}
\xymatrix{
 \Bl_{Y(1)} X\ar[d] & \ar[l] \ar[d] Y(1)\times_X \Bl_{Y(1)}X\\
X & \ar[l]  Y(1).
}
\end{split}
\end{equation}
Using pro-excision, see Proposition~\ref{pr.findesc}, we relate the
square~\eqref{intro.eq46} to the back square of~\eqref{intro.eq45}.
So we have to prove that the back square in~\eqref{intro.eq45} induces a cartesian square of
$K$-theory pro-spectra in the index $n$. The latter is established in the following
steps.
\begin{itemize}
\item[\it Step 1.]  For fixed $n$ the front square of~\eqref{intro.eq45} induces a cartesian square of
  $K$-theory spectra, see Theorem~\ref{thm:descent-for-derived-blow-up}.
\item[\it Step 2.] The map $\alpha$ induces an equivalence on $K$-theory pro-spectra in
  the index $n$, see Lemma~\ref{ex.lem.coclde2}.
\item[\it Step 3.] The upper square  induces a
cartesian square of $K$-theory pro-spectra in the index $n$. This is based on a version of pro-excision
for simplicial rings, see Corollary~\ref{ex.corex2} and Lemma~\ref{ex.lem.coclde2}.
\end{itemize}

\smallskip

In Section~\ref{sec:DerivedSchemes} we recall some foundations of the theory of derived
schemes and their perfect modules. Then we recall the Waldhausen--Thomason--Trobaugh
$K$-theory of derived schemes and in the affine case the relation to Waldhausen's plus
construction for simplicial rings. 
In Section~\ref{sec:DerivedBlowUp} we introduce our major
derived schemes, namely the Koszul scheme and the derived blow-up. We generalize
Thomason's descent for $K$-theory of blow-ups in regularly immersed centers to derived blow-ups in general sequences.
In Section~\ref{sec:ProExcision} we generalize part of Suslin's work on excision for
$K$-theory  to commutative simplicial rings.
Section~\ref{sec.proof} contains the reduction steps which lead to a proof of
Theorem~\ref{intro.main.thm}.
In Section~\ref{sec:application} we prove Theorem~\ref{intro.thm.wc},
Theorem~\ref{intro.thm.kh} and Corollary~\ref{intro.cor}.

\subsection*{Acknowledgement}

Ofer Gabber suggested to us to use derived algebraic geometry in order to generalize Thomason's
calculation of the $K$-theory of a blow-up in a regularly immersed center, see Section~\ref{sec:DerivedBlowUp}.
We would like to thank him for this essential remark.  We would like to thank Matthew
Morrow for several helpful discussions about his work on pro-calculations in $K$-theory.
Finally, we thank the referee for his or her careful reading of the manuscript and several comments to improve the presentation.

\subsection*{Notation}

If not explicitly stated otherwise a ring is commutative and unital. The object of a
pro-category corresponding to a system $(X_n)_{n\in I}$ indexed by a cofiltered category $I$
is denoted in Deligne's notation $\prolim{n} X_n$. All our indexing categories
of pro-objects are  countable. 
Weak equivalences of spaces or spectra are simply called equivalences.

A simplicial ring $A$ is called noetherian if $\pi_0 A$ is
a noetherian ring and $\pi_i A$ is a finitely generated $\pi_0 A$-module for all $i\ge 0$.
Schemes are denoted by italic letters $X,Y,\dots$,  derived schemes by calligraphic letters $\dX, \dY,  \dots$

By an $\infty$-category we mean more precisely an $(\infty,1)$-category. For the purpose of this paper, the choice of model of $\infty$-categories does not play any essential role. 
However, to be definite, and in accordance with main reference \cite{Blumberg-Tabuada-Gepner} for the $K$-theory of stable $\infty$-categories, we use the language of quasi-categories as developed by Joyal and Lurie \cite{htt}. 
We consider an ordinary category as an $\infty$-category via its nerve and abstain from
an extra decoration.

\section{Derived schemes and \texorpdfstring{$K$}{K}-theory}
\label{sec:DerivedSchemes}

We use this section to fix some notation for derived schemes and to recollect some facts about their $K$-theory that we will need.

\subsection{Simplicial rings and derived schemes}\label{subsec:simplicial-rings}
Basic references for the material of this subsection are \cite{ToenVezzosiHAGII} or  \cite{sag}.
An overview is \cite{ToenSurvey}.

Let $\cRing$ be the category of simplicial commutative rings.
A map in $\cRing$ is called a weak equivalence if the underlying map of simplicial sets is a weak equivalence.
We denote by $\sRing$ the $\infty$-category obtained from $\cRing$ by inverting the weak equivalences and call it the $\infty$-category of \emph{derived rings}.

\begin{defn}
  A \emph{derived scheme} is a pair $\dX = (|\dX|, \sO)$ consisting of a topological space
  $|\dX|$ and a sheaf of derived rings $\sO$ on $|\dX|$ which is
  hypercomplete\footnote{Note that every sheaf is hypercomplete if the topological space $|\dX|$ is noetherian of finite
    dimension.} as a sheaf of spaces satisfying the following conditions:
\begin{itemize}
\item[(i)] $(|\dX|, \pi_{0}\sO)$ is a scheme in the usual sense,
\item[(ii)] each of the sheaves $\pi_{n}\sO$ is quasi-coherent as a sheaf of $\pi_{0}\sO$-modules.
\end{itemize}
Here $\pi_{n}\sO$ denotes the sheaf of abelian groups associated to the presheaf $U \mapsto \pi_{n}(\sO(U))$ on $|\dX|$.
In fact, $\pi_{0}\sO$ is a sheaf of rings and each $\pi_{n}\sO$ is a sheaf of $\pi_{0}\sO$-modules.
\end{defn}
\begin{rmk}
Replacing `sheaf of derived rings' by `sheaf of $E_{\infty}$-ring spectra with $\pi_{n}\sO \simeq 0$ for $n<0$' in this definition, we obtain the notion of a \emph{spectral scheme} \cite[Def.~1.1.2.8]{sag}. Via the Eilenberg--MacLane functor, every derived scheme $\dX$ has an associated spectral scheme $H(\dX)$. 
\end{rmk}

Derived schemes are the objects of an $\infty$-category $\dSch$ (cf.~\cite[\S 1.1.5]{sag}). Since we do not need the precise construction of this $\infty$-category, we only describe its ($1$-)morphisms:
A morphism of derived schemes $\dX \to \dY$  is a pair  consisting of a morphism of topological spaces $f\colon |\dX| \to |\dY|$ and a morphism of sheaves $f^{\sharp}\colon \sO_{\dY} \to f_{*}\sO_{\dX}$ such that the induced morphism of ringed spaces $(|\dX|, \pi_{0}\sO_{\dX}) \to (|\dY|, \pi_{0}\sO_{\dY})$ 
 is a morphism of schemes, i.e.~is local.

The $\infty$-category $\dSch$ admits all finite limits.
If $\dX$ is a derived scheme, we denote the \emph{underlying scheme} $(|\dX|, \pi_{0}\sO)$ by $t\dX$.
Every ordinary scheme $X$ defines a derived scheme $iX$ with discrete structure sheaf. We will often omit $i$ from the notation. These constructions give rise 
to an adjunction
\begin{equation}\label{eq:constantderivedschemeadjunction}
i :  \Sch \leftrightarrows \dSch : t
\end{equation}
of $\infty$-categories, where $\Sch$ denotes the category of ordinary schemes.
As a right adjoint, $t$ sends  cartesian squares of derived schemes to cartesian squares of schemes. To avoid confusion, we will call cartesian squares in $\dSch$ `derived cartesian'. Given morphisms of derived schemes $\dX \to \dY$, $\dZ \to \dY$, we denote their fibre product by $\dX \times_{\dY}^{\dho} \dZ$. If $X,Y,Z$ are ordinary schemes, their derived fibre product $\dW =X\times_{Y}^{\dho} Z$ is a derived scheme with underlying scheme $X\times_{Y}Z$ and $\pi_{n}\sO_{\dW} \cong \mathcal{T}or_{n}^{\sO_{Y}}(\sO_{X},\sO_{Z})$ given by the higher Tor-sheaves.

A derived scheme $\dX$ is called \emph{affine} if $t\dX$ is an affine scheme.
Let $\dAff \subseteq \dSch$ denote the full $\infty$-subcategory spanned by the affine derived schemes.
The global sections functor induces an equivalence $\dAff \xrightarrow{\simeq} \sRing^{op}$ whose inverse is denoted by $\Spec$.
The latter functor can be described explicitly as follows:
Let $A$ be a simplicial ring.
The underlying space of $\Spec(A)$ is equal to that of the ordinary scheme $\Spec(\pi_{0}A)$, a closed subscheme of $\Spec(A_{0})$.
Now $A$ defines a sheaf of simplicial $A_{0}$-algebras on $|\Spec(A_0)|$ whose restriction to $|\Spec(\pi_{0}A)|$ is equivalent to the structure sheaf of the derived scheme $\Spec(A)$.
The  adjunction \eqref{eq:constantderivedschemeadjunction} restricts to the  adjunction $const:\Ring^{op}\leftrightarrows \sRing^{op}:\pi_0$ of $\infty$-categories.

\begin{defn}
A morphism of derived schemes $\dX \to \dY$ is called \emph{affine} if  for every map $\dZ \to \dY$ from an affine derived scheme the base change $\dX \times_{\dY}^{\dho} \dZ$ is an affine derived scheme, or, equivalently, if and only if the morphism of underlying schemes $t\dX \to t\dY$ is affine.
A derived scheme $\dX$ is called \emph{quasi-compact} if the topological space $|X|$ is quasi-compact.
It is called \emph{quasi-separated} if the diagonal morphism is quasi-compact, i.e.~pullbacks along morphisms from an affine derived scheme are quasi-compact.
We will use the abbreviation \emph{qcqs} for quasi-compact, quasi-separated.
\end{defn}

\begin{defn}
We call a  simplicial ring $A$ \emph{noetherian} if $\pi_{0}(A)$ is a noetherian ring and each $\pi_{n}(A)$ is a finitely generated $\pi_{0}(A)$-module. Similarly, we call a derived scheme $\dX$ \emph{noetherian} if the underlying scheme $t\dX$ is noetherian and each $\pi_{n}\sO$ is coherent as sheaf of $\pi_{0}\sO$-modules.
\end{defn}

Via the Eilenberg--MacLane functor, a simplicial commutative ring $A$ gives rise to an $E_{\infty}$-ring spectrum $HA$, and we denote by $\dMod(A)$ the stable presentable symmetric monoidal $\infty$-category of $HA$-module spectra \cite[\S 7.1]{halg}.

\begin{rmk}
A similar argument as in the proof of \cite[Thm.~7.1.2.13]{halg} yields
the following  more concrete description of $\Mod(A)$. The normalization $NA$ of $A$ is a commutative dg-algebra (cf.~\cite{SchwedeShipley03}). The $\infty$-category $\Mod(A)$ is equivalent to the $\infty$-category of unbounded $NA$-dg-modules with  quasi-isomorphisms inverted.
\end{rmk}

For a derived scheme $\dX$ we denote by $\QCoh(\dX)$ the stable presentable symmetric monoidal $\infty$-category of quasi-coherent $\sO_{\dX}$-modules \cite[Def.~2.2.2.1]{sag}. In fact, $\QCoh(\dX)$ is equivalent to the $\infty$-category of quasi-coherent sheaves on the spectral scheme $H(\dX)$ (cf.~\cite[Cor.~2.15]{Shipley02}).  If $\dX=\Spec(A)$ is affine, we have an equivalence $\QCoh(\dX)\simeq \dMod(A)$. In general, it can be described as the $\infty$-categorical limit
\[
\QCoh(\dX) \simeq \lim_{\Spec(A) \in (\dAff/\dX)^{op}} \dMod(A)
\]
of $\infty$-categories.
This generalizes the classical notion: If $X$ is an ordinary scheme, the homotopy category $\Ho(\QCoh(iX))$ is equivalent to the unbounded derived category of  $\sO_{X}$-modules with quasi-coherent cohomology sheaves \cite[Cor.~2.2.6.2]{sag}. 
As a stable $\infty$-category $\QCoh(\dX)$ admits mapping spectra which we denote by $\Hom(-,-)$.

The following lemma is well known, see \cite[Prop.~3.6]{BFN} or \cite[Prop.~6.2.6.2]{sag}.
\begin{lem}
An object $F \in \QCoh(\dX)$ is dualizable if and only if it is \emph{perfect}, i.e.~its restriction to each affine $\Spec(A)\to \dX$ lies in the the smallest idempotent complete stable $\infty$-subcategory of $\Mod(A)$ containing $A$ as an object.
\end{lem}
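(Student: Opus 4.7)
My plan is to reduce the lemma to the affine case via the symmetric monoidal limit description
\[
\QCoh(\dX) \simeq \lim_{\Spec(A) \in (\dAff/\dX)^{op}} \Mod(A),
\]
and then to handle the affine case by identifying dualizable objects of $\Mod(A)$ with compact ones. For the global-to-local reduction I would note that each pullback $\QCoh(\dX) \to \Mod(A)$ is symmetric monoidal and hence automatically preserves dualizable objects; this gives the ``only if'' direction as soon as the affine case is settled. Conversely, in a limit of symmetric monoidal $\infty$-categories an object is dualizable as soon as its image in every factor is, since the componentwise duals together with their unit and counit maps assemble canonically into a dualizing datum in the limit. This global-to-local step is essentially formal, so the real content lies at the affine level.

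For the affine case, one direction is straightforward: the unit $A \in \Mod(A)$ is self-dual, and in any stable symmetric monoidal $\infty$-category the full subcategory of dualizable objects is closed under shifts, finite colimits, and retracts. Hence the smallest idempotent-complete stable $\infty$-subcategory of $\Mod(A)$ containing $A$, namely the perfect modules, consists entirely of dualizable objects.

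For the converse, suppose $F \in \Mod(A)$ is dualizable with dual $F^\vee$. The tensor--hom identification $\Hom(F,-) \simeq \Hom(A, F^\vee \otimes -)$, combined with the fact that $A$ is a compact object of $\Mod(A)$ and that $F^\vee \otimes -$ commutes with filtered colimits, shows that $F$ is itself compact. The load-bearing input is then the identification of compact objects in $\Mod(A)$ with perfect ones, which follows from $A$ being a compact generator of the presentable stable $\infty$-category $\Mod(A)$ together with the standard argument that every compact object of such a category is a retract of a finite iterated extension of shifts of the generator; both ingredients are available in \cite{BFN} and \cite{sag}. The main obstacle, if one wants a fully self-contained proof, is this last step, as everything else is either formal or a direct check; in the spirit of the paper it seems cleanest simply to cite the affine case from the references and isolate the limit argument as the new content.
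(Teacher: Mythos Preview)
Your proposal is correct and is essentially the standard argument; note, however, that the paper does not supply its own proof of this lemma but simply cites \cite[Prop.~3.6]{BFN} and \cite[Prop.~6.2.6.2]{sag}, whose arguments proceed along exactly the lines you describe (local detection of dualizability via the symmetric monoidal limit presentation, and the affine identification dualizable $\Leftrightarrow$ compact $\Leftrightarrow$ perfect). So there is nothing to compare: you have reconstructed the cited proof.
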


\begin{defn}
We denote by $\Perf(\dX)$ the full stable idempotent complete $\infty$-subcategory  of $\QCoh(\dX)$ consisting of the perfect modules.
\end{defn}

\begin{rmk}\label{rmk:perfect-complexes}
If $X$ is an ordinary scheme, then the homotopy category $\Ho(\Perf(iX))$ is equivalent to the usual derived category of perfect complexes of $\sO_{X}$-modules.
\end{rmk}

\begin{prop}
Let $\dX$ be a qcqs derived scheme. Then $\QCoh(\dX)$ is compactly generated, and the compact objects coincide with the perfect ones.
\end{prop}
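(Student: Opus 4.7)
The plan is to reduce to the affine case by induction on a finite cover by affine opens, and then invoke Thomason's extension principle for perfect complexes, adapted to the derived setting.

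First, I would handle the affine case $\dX = \Spec(A)$. Here $\QCoh(\dX) \simeq \Mod(A)$, and the free module $A$ is a compact generator: it is a generator because the mapping spectrum functor $\Hom(A,-)$ is equivalent to the identity on underlying spectra and detects triviality, and it is compact because the identity functor commutes with all colimits. By a standard result on compactly generated stable $\infty$-categories with a single compact generator (e.g.\ \cite[\S 7.2.4]{halg}), the full subcategory of compact objects then coincides with the smallest idempotent complete stable subcategory generated by $A$, which is $\Perf(A)$ by definition.

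For the general qcqs case, I would pick a finite cover by affine open subschemes $\dU_1,\dots,\dU_n$ whose pairwise intersections are themselves qcqs, and proceed by induction on $n$. The inductive step rests on the Mayer--Vietoris square
\[
\xymatrix{
\QCoh(\dX) \ar[r] \ar[d] & \QCoh(\dU) \ar[d] \\
\QCoh(\dU_n) \ar[r] & \QCoh(\dU \cap \dU_n),
}
\]
where $\dU = \dU_1 \cup \cdots \cup \dU_{n-1}$; this square is cartesian in the $\infty$-category of presentable stable $\infty$-categories, which is Zariski descent for $\QCoh$ (available in the derived setting by \cite{BFN, sag}). That compact objects in the corner are perfect, and conversely, will follow once we know restriction to each affine piece preserves compacts and that there are enough compact objects globally.

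The heart of the argument, and the main obstacle, is the existence of enough compact objects in $\QCoh(\dX)$. For this I would use Thomason's extension principle in its derived incarnation: given a perfect $\sF$ on a quasi-compact open $\dU \subseteq \dX$ with closed complement $\dZ$, there exists a perfect $\sG$ on $\dU$ such that $\sF \oplus \sG$ extends to a perfect complex on $\dX$. Concretely, one first produces compact objects of $\QCoh(\dX)$ supported on $\dZ$ by choosing perfect Koszul-type resolutions for a sequence of local generators of the ideal cutting out $\dZ$, and then glues via the localization sequence $\QCoh_{\dZ}(\dX) \to \QCoh(\dX) \to \QCoh(\dU)$ (with $\QCoh_{\dZ}(\dX)$ the kernel of restriction to $\dU$). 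Combining this extension with the inductive hypothesis yields both that $\QCoh(\dX)$ is compactly generated and that compact agrees with perfect: any compact object restricts to a perfect object on each affine chart, hence is perfect by descent along the cover, and conversely any perfect object is locally compact and, by Zariski descent together with the extension lemma, globally compact.
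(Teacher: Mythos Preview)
The paper does not actually prove this proposition: it simply cites \cite[Prop.~9.6.1.1]{sag} and adds a remark that generation can be checked on the level of triangulated homotopy categories. Your sketch therefore supplies far more than the paper itself.

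Your outline is the standard Thomason--Trobaugh/Neeman strategy, which is indeed what underlies the proofs in \cite{BFN} and \cite{sag}, and it is correct in substance. One point worth tightening in the inductive step: the intersection $\dU \cap \dU_n$ is a quasi-compact open in the affine $\dU_n$, but the opens $\dU_1 \cap \dU_n, \dots, \dU_{n-1} \cap \dU_n$ need not themselves be affine unless $\dX$ is separated, so the induction as literally stated does not close up. The usual fix is to refine to a cover of $\dU \cap \dU_n$ by distinguished affine opens of $\dU_n$, whose pairwise intersections are again distinguished affines; a separate sub-induction then handles quasi-compact opens of affines directly. You should also make explicit why restriction to an open preserves compact objects: this is because pushforward along a qcqs open immersion preserves filtered colimits, so its left adjoint preserves compacts. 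With these two clarifications, your argument via the localization sequence $\QCoh_{\dZ}(\dX) \to \QCoh(\dX) \to \QCoh(\dU)$ and Koszul-type compact generators supported on $\dZ$ goes through.
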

This is \cite[Prop.~9.6.1.1]{sag}. Note that, since the $\infty$-categories in question are stable, one can check whether they are generated by  a set  $S$ of objects on the level of (triangulated) homotopy categories  \cite[Rem.~1.4.4.3]{halg}.

\subsection{\texorpdfstring{$K$}{K}-theory of derived schemes}
Using $\infty$-categorical analogs of Waldhausen's $S_{\bullet}$-construction and Schlichting's construction of non-connective $K$-theory of exact and derived categories, Blumberg--Gepner--Tabuada introduce non-connective $K$-theory of small stable $\infty$-categories in \cite[\S 9.1]{Blumberg-Tabuada-Gepner}. If $\sC$ is such an $\infty$-category, we denote its non-connective $K$-theory spectrum by $K(\sC)$. 

Let $\Cat^{\mathrm{ex}}_{\infty}$ be the $\infty$-category of small stable $\infty$-categories and exact functors.
If $\sA \to \sB$ is a fully faithful exact functor between small stable \mbox{$\infty$-categories}, one can form its cofibre $\sB/\sA$ in $\Cat^{\mathrm{ex}}_{\infty}$. This is a model for the Verdier quotient: $\Ho(\sB)/\Ho(\sA) \xrightarrow{\simeq} \Ho(\sB/\sA)$ (see~\cite[Prop.~5.14]{Blumberg-Tabuada-Gepner}).
A sequence $\sA \to \sB \to \sC$ in $\Cat^{\mathrm{ex}}_{\infty}$ is called \emph{exact} if the composite is trivial, $\sA \to \sB$ is fully faithful, and the induced functor $\sB/\sA \to \sC$ becomes an equivalence after idempotent completion; this is equivalent to requiring that $\Ho(\sA) \to \Ho(\sB) \to \Ho(\sC)$ be an exact sequence of triangulated categories \cite[Prop.~5.15]{Blumberg-Tabuada-Gepner}.

A direct consequence of \cite[Thm.~9.8]{Blumberg-Tabuada-Gepner} is:
\begin{thm} \label{thm:K-is-localizing}
If $\sA \to \sB \to \sC$ is an exact sequence in $\Cat^{\mathrm{ex}}_{\infty}$, then the sequence of non-connective $K$-theory spectra
\[
K(\sA) \to K(\sB) \to K(\sC)
\]
is a cofibre sequence.
\end{thm}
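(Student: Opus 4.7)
The statement is designed to be an immediate corollary of \cite[Thm.~9.8]{Blumberg-Tabuada-Gepner}, so my plan is not to reprove that theorem but to explain how it implies the formulation we want. The content of loc.\ cit.\ is that non-connective $K$-theory is a \emph{localizing invariant} on $\Cat^{\mathrm{ex}}_{\infty}$; unwinding this, I need to check that the notion of exact sequence used there (a fibre sequence in $\Cat^{\mathrm{ex}}_{\infty}$ with fully faithful first map) agrees with the one recalled above (fully faithful first map, vanishing composite, quotient equivalent to the target after idempotent completion), and that `localizing' there means precisely taking such exact sequences to cofibre sequences of spectra.

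The first step is to recall the construction: connective $K$-theory $K^{\geq 0}(\sC)$ is defined for a small stable $\sC$ via the $\infty$-categorical $S_{\bullet}$-construction, and the non-connective version $K(\sC)$ is built from it by the $\infty$-categorical analog of Schlichting's (or Bass's) delooping, using the suspension of $\sC$ in $\Cat^{\mathrm{ex}}_{\infty}$ obtained from the exact sequence $\sC \to \mathrm{Ind}(\sC)^{\omega} \to (\mathrm{Ind}(\sC)/\sC)^{\omega}$ attached to the idempotent completion of countable sums. By construction, $\Omega K(\Sigma \sC) \simeq K(\sC)$, and each stage fits into a cofibre sequence which, by induction, forces the localization property on all homotopy groups as soon as one has it on the connective part above $\pi_0$.

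The second step, the real content borrowed from \cite{Blumberg-Tabuada-Gepner}, is the $\infty$-categorical Waldhausen fibration theorem: given an exact sequence $\sA \to \sB \to \sC$, the induced sequence $K^{\geq 0}(\sA) \to K^{\geq 0}(\sB) \to K^{\geq 0}(\sC)$ becomes a fibre sequence after taking the connective cover of the total fibre; that is, it is a fibre sequence of spaces away from $\pi_0$, with a possible defect on $\pi_0$ captured in the long exact sequence by a term of the form $K_{-1}(\sA)$. Combined with the delooping construction above, this defect is exactly cancelled, and one concludes that $K(\sA) \to K(\sB) \to K(\sC)$ is a fibre, hence (by stability) a cofibre sequence of spectra.

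The main obstacle is entirely on the side of \cite{Blumberg-Tabuada-Gepner}, namely the $\infty$-categorical fibration theorem: making sense of $S_{\bullet}$ for small stable $\infty$-categories and proving that quotients of stable $\infty$-categories induce fibre sequences on connective $K$-theory up to the $\pi_0$-defect. Once this is granted, together with the compatibility of the construction of $K$ with suspension in $\Cat^{\mathrm{ex}}_{\infty}$, the passage from `localizing up to $\pi_0$' to `localizing' is formal, and the theorem as stated follows by citing \cite[Thm.~9.8]{Blumberg-Tabuada-Gepner} directly.
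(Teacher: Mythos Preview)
Your proposal is correct and matches the paper's approach: the paper gives no proof beyond the sentence ``A direct consequence of \cite[Thm.~9.8]{Blumberg-Tabuada-Gepner} is:'' preceding the statement, and your proposal is precisely this citation together with an optional sketch of what goes into that result. The extra explanation you provide (reconciling the two phrasings of exact sequence, the Waldhausen fibration theorem for connective $K$-theory, and the Schlichting-type delooping) is accurate in outline and harmless, though strictly more than the paper asks for.
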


\begin{defn}
Let $\dX$ be a qcqs derived scheme. We define its $K$-theory spectrum by $K(\dX) = K(\Perf(\dX))$. If $\dX = \Spec(A)$ is affine, we write $K(A)= K(\dX)$ for simplicity.
\end{defn}
Using Remark~\ref{rmk:perfect-complexes} we see that, for $X$ an ordinary qcqs scheme, $K(iX)$ is equivalent to the $K$-theory spectrum constructed in \cite{Thomason-Trobaugh}.

In \cite[Prop.~A.13]{ClausenMathewNaumannNoel16}, the authors generalize a descent result of Thomason to the setting of spectral algebraic spaces. In our setting it gives:

\begin{thm}\label{thm:Zariski-descent}
The $K$-theory of derived schemes with underlying noetherian topological space satisfies \v{C}ech-descent for the Zariski topology. 
\end{thm}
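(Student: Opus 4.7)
The plan is to adapt the Zariski descent proof of Thomason--Trobaugh to the derived setting, using the $\infty$-categorical tools recalled above. Since $|\dX|$ is noetherian, every Zariski cover admits a finite subcover; by a standard induction on the cardinality of a finite cover (reducing to the two-element case via Mayer--Vietoris), Čech descent for all Zariski covers follows once we know that for any two quasi-compact open $\dU, \dV \subseteq \dX$ with $\dU \cup \dV = \dX$ the square
\[
\xymatrix{
K(\dX) \ar[r] \ar[d] & K(\dU) \ar[d] \\
K(\dV) \ar[r] & K(\dU \cap \dV)
}
\]
is homotopy cartesian.

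The main input is a localization sequence for perfect modules along a quasi-compact open immersion $j\colon \dU \hookrightarrow \dX$ with closed complement $\dZ$. Let $\Perf_{\dZ}(\dX) \subseteq \Perf(\dX)$ denote the full stable $\infty$-subcategory of perfect modules whose restriction to $\dU$ vanishes. I would show that
\[
\Perf_{\dZ}(\dX) \to \Perf(\dX) \xrightarrow{j^{*}} \Perf(\dU)
\]
is an exact sequence in $\Cat^{\mathrm{ex}}_{\infty}$. The first map is fully faithful by construction and the composite is trivial; the nontrivial point is that $j^{*}$ is essentially surjective up to idempotent completion. For this one uses that $\QCoh(\dX)$ is compactly generated by $\Perf(\dX)$: any perfect $F$ on $\dU$ has pushforward $j_{*}F \in \QCoh(\dX)$ which is a filtered colimit of perfect modules on $\dX$, and a truncation/summand argument as in \cite{Thomason-Trobaugh} then produces a perfect $G$ on $\dX$ whose restriction is $F$ up to a direct summand. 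Theorem~\ref{thm:K-is-localizing} then gives a fibre sequence $K(\Perf_{\dZ}(\dX)) \to K(\dX) \to K(\dU)$.

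Finally, a Zariski gluing argument shows that restriction induces an equivalence $\Perf_{\dZ}(\dX) \xrightarrow{\simeq} \Perf_{\dZ}(\dV)$, since all objects in question are trivial on $\dU \cap \dV$ and $\QCoh$ satisfies descent. Combining this excision equivalence with the localization sequence applied to $\dU \hookrightarrow \dX$ and $\dU \cap \dV \hookrightarrow \dV$ identifies the fibres of the horizontal arrows in the square above, yielding the homotopy cartesian property. The main obstacle is the Thomason--Trobaugh approximation underlying the localization sequence: the naive extension by zero of a perfect complex from an open is not perfect, and producing a global perfect lift requires the $\infty$-categorical analogue of Thomason's argument, i.e.~compact generation of $\QCoh(\dX)$ for qcqs $\dX$ and the compatibility of $\Perf$ with filtered colimits in $\QCoh$. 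The rest of the ingredients --- Mayer--Vietoris induction and excision --- are formal given the framework set up in this section.
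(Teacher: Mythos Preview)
Your outline is essentially correct and follows the classical Thomason--Trobaugh strategy transported to the derived/spectral setting: reduce to a two-element Mayer--Vietoris square, establish the localization sequence via the exact sequence $\Perf_{\dZ}(\dX)\to\Perf(\dX)\to\Perf(\dU)$, and then identify the fibre terms by excision. The key technical point you flag---that $j^{*}$ is essentially surjective up to idempotent completion---is indeed the heart of the matter, and it follows from Neeman's abstract compact-generation arguments once one knows $\QCoh(\dX)$ is compactly generated with compact objects the perfect ones (which was recalled just above the statement).

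The paper, however, does not give a proof at all: it simply records the theorem as a direct consequence of \cite[Prop.~A.13]{ClausenMathewNaumannNoel16}, where the authors prove the analogous descent statement for spectral algebraic spaces. So there is no argument in the paper to compare against; your write-up is a reasonable sketch of what lies behind that citation. If you want to turn your sketch into a self-contained proof, the only place that needs real care is the Thomason approximation step---rather than the ad hoc truncation/summand argument you allude to, it is cleaner to invoke Neeman's theorem on Bousfield localizations of compactly generated stable $\infty$-categories, which immediately gives that $\Perf(\dX)/\Perf_{\dZ}(\dX)\to\Perf(\dU)$ is an equivalence after idempotent completion.
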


To prove the excision result in Section~\ref{sec:ProExcision} we need the description of the connective part of the $K$-theory of affine derived schemes via the plus-construction.
Following Waldhausen \cite{Waldhausen87}, we consider for an integer $n\geq 0$ and for a
not necessarily unital simplicial ring $I$ the group-like simplicial monoid $\GLh(I)_n$
defined by the cartesian square
\begin{equation}\label{eq:waldhausenglhatconstruction}
\begin{split}
\xymatrix{
\GLh_n(I) \ar[d]\ar[r] & \rM_{n,n}(I) \ar[d] \\
\GL_n(\pi_0 I)\ar[r] &  \rM_{n,n}(\pi_0 I)
}
\end{split}
\end{equation}
which is also homotopy cartesian, as the right vertical map is a fibration. Here, as usual
for a nonunital ring $J$, the group $\GL_n(J)$ is defined by
\[
\GL_n(J) = \ker (  \GL_n(\tilde J )\to \GL_n(\Z ) )
\]
where $\tilde J =  \Z \ltimes J$ is the unitalization of $J$.

Taking the colimit over $n$ of the diagram \eqref{eq:waldhausenglhatconstruction}, one
obtains the cartesian and homotopy cartesian square
\begin{equation}\label{eq:waldhausenglhatconstructionforideals}
\begin{split}
\xymatrix{
\GLh(I) \ar[d]\ar[r] & \rM (I) \ar[d]\phantom{.} \\
\GL(\pi_0 I)\ar[r] &  \rM (\pi_0 I).
} 
\end{split}
\end{equation}
Clearly, $\pi_0( \GLh(I)) \cong \GL(\pi_0 I) $. As the upper horizontal map
in~\eqref{eq:waldhausenglhatconstructionforideals} induces an isomorphism of connected
components of the identity elements, we get $\pi_i( \GLh(I) ) \cong \rM (\pi_i I) $ for $i>0$.
In particular, the simplicial monoid $\GLh(I)$ is grouplike. From the classical \cite[Prop.~1.5]{Segal74} one obtains

\begin{lem}\label{lemma:segalstheoremappliedtobglhat}
The connected classifying space $\BGLh(I)$ has homotopy groups 
\[
\pi_i(\BGLh(I))\cong
  \pi_{i-1}(\GLh (I)) \cong   \begin{cases} 
   \GL(\pi_0(I))  & \text{if } i=1\\
   \rM(\pi_{i-1}(I)) & \text{if } i>1.
  \end{cases} 
\] 
\end{lem}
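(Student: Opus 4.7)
The plan is short: the lemma follows by combining the concrete computation of $\pi_*(\GLh(I))$ already indicated in the paragraph preceding the statement with Segal's classical delooping theorem. So the work splits into two independent pieces which we can carry out in sequence.

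First I would verify the homotopy groups of $\GLh(I)$. From the homotopy cartesian square~\eqref{eq:waldhausenglhatconstructionforideals}, the associated long exact sequence of homotopy groups (pointed at the identity element of each group-like piece) immediately gives $\pi_0(\GLh(I)) \cong \GL(\pi_0 I)$, since $\rM(\pi_0 I)$ surjects onto itself and $\GL(\pi_0 I)$ is the fibre over the identity component. For $i \geq 1$ the spaces $\GL(\pi_0 I)$ and $\rM(\pi_0 I)$ in the lower row are discrete, so they contribute nothing to the higher homotopy groups, and the top-to-bottom fibre sequence yields $\pi_i(\GLh(I)) \cong \pi_i(\rM(I)) \cong \rM(\pi_i I)$, the latter isomorphism coming from the fact that $\rM(-)$ is the filtered colimit of the matrix functors $\rM_{n,n}(-)$ which commute with homotopy groups. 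These computations, together with the remark that the top-horizontal map in~\eqref{eq:waldhausenglhatconstructionforideals} is an isomorphism on identity components, in particular show that $\pi_0(\GLh(I)) = \GL(\pi_0 I)$ is a group, so $\GLh(I)$ is a group-like simplicial monoid.

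Next I would invoke Segal's theorem \cite[Prop.~1.5]{Segal74}: for any group-like simplicial monoid $M$ the canonical map $M \to \Omega BM$ is a weak equivalence. Applied to $M = \GLh(I)$ this yields $\pi_i(\BGLh(I)) \cong \pi_{i-1}(\GLh(I))$ for all $i \geq 1$, and $\BGLh(I)$ is connected. Substituting the values of $\pi_{i-1}(\GLh(I))$ computed in the previous step gives exactly the formulas claimed in the lemma.

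There is no substantial obstacle here; the only thing that needs care is the verification that the top-horizontal map in~\eqref{eq:waldhausenglhatconstructionforideals} is an isomorphism on identity components (so that group-likeness of $\GLh(I)$ and the identification of $\pi_i$ for $i > 0$ with $\rM(\pi_i I)$ both follow), which reduces to the analogous statement at each finite level $n$ in~\eqref{eq:waldhausenglhatconstruction} and is immediate from the construction.
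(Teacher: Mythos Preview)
Your proposal is correct and follows essentially the same approach as the paper: the paragraph immediately preceding the lemma already computes $\pi_i(\GLh(I))$ from the homotopy cartesian square~\eqref{eq:waldhausenglhatconstructionforideals} and observes that $\GLh(I)$ is group-like, and the lemma is then stated as a direct consequence of Segal's \cite[Prop.~1.5]{Segal74}. Your write-up simply spells out these two steps in a bit more detail.
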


For a unital simplicial ring $A$ 
we can apply Quillen's plus construction \cite[IV.1]{Weib13} and obtain a natural transformation
$\BGLh(A)\to \BGLh(A)^+$ which is characterized up to homotopy by the fact that it is
acyclic and that it kills the commutator subgroup of $ \pi_1 ( \BGLh(A)) \cong \GL(\pi_0
A) $. So we get

\begin{lem}\label{ds.lemk1}
 The canonical map
\[
\pi_1 (\BGLh(A)^+ )\xrightarrow{\cong}  \pi_1 ( \BGL(\pi_0 A)^+ )\cong K_1( \pi_0 A)
\]
is an isomorphism.
\end{lem}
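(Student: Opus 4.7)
The plan is to deduce this directly from Lemma~\ref{lemma:segalstheoremappliedtobglhat} together with the characterizing properties of the plus construction. First, I would observe that the defining pullback square~\eqref{eq:waldhausenglhatconstructionforideals} provides a morphism of simplicial monoids $\GLh(A) \to \GL(\pi_0 A)$, which upon delooping yields a map of classifying spaces $\BGLh(A) \to \BGL(\pi_0 A)$. By Lemma~\ref{lemma:segalstheoremappliedtobglhat}, this map is an isomorphism on $\pi_1$, both sides being identified with $\GL(\pi_0 A)$.

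Next, by the naturality of the plus construction (with respect to the commutator subgroups of the relevant $\pi_1$'s), the above map extends to a commutative square
\[
\xymatrix{
\BGLh(A) \ar[r] \ar[d] & \BGL(\pi_0 A) \ar[d] \\
\BGLh(A)^+ \ar[r] & \BGL(\pi_0 A)^+.
}
\]
Since the plus-construction kills precisely the commutator subgroup of $\pi_1$, and since the induced map on $\pi_1$ of the top row is an isomorphism, the induced map on $\pi_1$ of the bottom row is the identity of $\GL(\pi_0 A)/[\GL(\pi_0 A),\GL(\pi_0 A)]$. The latter group equals $\GL(\pi_0 A)/E(\pi_0 A)=K_1(\pi_0 A)$ by Whitehead's lemma (which ensures that the commutator subgroup of $\GL(\pi_0 A)$ coincides with the elementary subgroup $E(\pi_0 A)$, so that the quotient agrees with the usual definition of $K_1$).

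There is really no obstacle here: the content has been absorbed into Lemma~\ref{lemma:segalstheoremappliedtobglhat}, and the remainder is formal naturality of the plus construction plus the standard identification $\pi_1(\BGL(R)^+) \cong K_1(R)$. The only point to verify is that the perfect normal subgroup killed by the plus construction on both sides agrees under the isomorphism of fundamental groups, which is automatic because both are the commutator subgroup of the same group $\GL(\pi_0 A)$.
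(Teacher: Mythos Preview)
Your argument is correct and matches the paper's approach: the paper does not give a separate proof but derives the lemma from the sentence immediately preceding it, namely that the plus construction is acyclic and kills the commutator subgroup of $\pi_1(\BGLh(A)) \cong \GL(\pi_0 A)$, which is exactly what you spell out. Your additional invocation of Whitehead's lemma to justify that the commutator subgroup is $E(\pi_0 A)$ (hence perfect, and the quotient is $K_1$) makes explicit a point the paper leaves implicit.
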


The following result follows directly from Waldhausen \cite[Thm.~2.3.2]{Waldhausen85} in conjunction with \cite[Prop.~9.31, 9.32]{Blumberg-Tabuada-Gepner}.
Alternatively, see \cite[Lemma~9.39, Prop.~9.40]{Blumberg-Tabuada-Gepner}.
\begin{prop}
\label{prop:plusequalswaldhausen}
The infinite loop space associated to the spectrum $K(A)$ is equivalent to $K_0(\pi_0(A))\times B\GLh(A)^+$. 
\end{prop}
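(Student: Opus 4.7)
The plan is to compare the $\infty$-categorical definition $K(A) = K(\Perf(HA))$ with Waldhausen's classical construction of $K$-theory for simplicial rings, and then to invoke Waldhausen's plus-construction theorem.

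First I would identify the connective cover of $K(A)$ with the classical Waldhausen $K$-theory of the category $\mathcal{F}(A)$ of finitely generated free simplicial $A$-modules, with cofibrations the split injections and weak equivalences the homotopy equivalences. By \cite[Prop.~9.31, 9.32]{Blumberg-Tabuada-Gepner}, the connective cover of the non-connective $K$-theory of a small stable $\infty$-category agrees with the Waldhausen $S_\bullet$-construction applied to any sufficiently rich Waldhausen model. Since every perfect $HA$-module is a retract of a finite free one, a cofinality argument lets me reduce to $\mathcal{F}(A)$, up to the passage to $K_0$.

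Second, I would invoke Waldhausen's Theorem~2.3.2 of \cite{Waldhausen85}, which computes the $K$-theory of $\mathcal{F}(A)$ and identifies its underlying infinite loop space with $K_0 \times B\GLh(A)^+$. The key point is that, by the definition of $\GLh$ recalled in \eqref{eq:waldhausenglhatconstructionforideals}, $B\GLh_n(A)$ classifies rank-$n$ free simplicial $A$-modules up to homotopy equivalence, so $\coprod_{n \geq 0} B\GLh_n(A)$ is the underlying $E_\infty$-monoid of the symmetric monoidal groupoid attached to $\mathcal{F}(A)$ under direct sum; the group-completion theorem then identifies the identity component of its group completion with $B\GLh(A)^+$. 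Finally, $K_0$ equals $K_0(\pi_0 A)$ since isomorphism classes of finitely generated projective modules over the simplicial ring $A$ correspond bijectively to those over $\pi_0 A$.

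The main subtlety is the comparison between the $\infty$-categorical $\Perf(HA)$ and the classical Waldhausen category $\mathcal{F}(A)$ of simplicial modules, together with the identification of their $K$-theory spectra. This comparison is worked out in \cite[Lem.~9.39, Prop.~9.40]{Blumberg-Tabuada-Gepner}, and once it is in hand, the rest of the argument is a direct translation; for this reason, rather than reproving the statement, I would simply cite the two chains of references indicated.
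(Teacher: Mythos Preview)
Your proposal is correct and follows essentially the same approach as the paper: the paper does not give a self-contained proof either, but simply cites Waldhausen \cite[Thm.~2.3.2]{Waldhausen85} together with \cite[Prop.~9.31, 9.32]{Blumberg-Tabuada-Gepner}, and alternatively \cite[Lem.~9.39, Prop.~9.40]{Blumberg-Tabuada-Gepner}, which is exactly the chain of references you invoke. Your additional exposition (cofinality, group completion, the identification of $K_0$) fleshes out what is behind those citations, but the logical route is the same.
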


As an application, we get:
\begin{thm}\label{thm:identificationofthenegativekgroups}
For an affine derived scheme $\Spec(A)$, the natural map \[\Spec(\pi_{0}A) \cong t\Spec(A) \to \Spec(A)\] induces isomorphisms $K_{n}(A) \cong K_{n}(\pi_0 A)$ for all integers $n\leq 1$.
\end{thm}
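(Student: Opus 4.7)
The plan is to split the proof by whether $n$ lies in the connective range $\{0,1\}$, which is handled directly by Proposition \ref{prop:plusequalswaldhausen}, or in the strictly negative range $n \le -1$, which I would address by Bass's fundamental theorem for non-connective $K$-theory.

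For $n = 0, 1$ I invoke Proposition \ref{prop:plusequalswaldhausen}, which identifies $\Omega^\infty K(A)$ with $K_0(\pi_0 A) \times B\GLh(A)^+$. Since $B\GLh(A)^+$ is connected, taking $\pi_0$ immediately gives $K_0(A) \cong K_0(\pi_0 A)$. Taking $\pi_1$ gives $K_1(A) \cong \pi_1(B\GLh(A)^+)$, which Lemma \ref{ds.lemk1} identifies with $K_1(\pi_0 A)$.

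For $n \le -1$, I would argue by downward induction on $n$, the key input being Bass's fundamental theorem: for every simplicial ring $B$, a natural split exact sequence
\[
0 \to K_n(B) \to K_n(B[T]) \oplus K_n(B[T^{-1}]) \to K_n(B[T, T^{-1}]) \to K_{n-1}(B) \to 0.
\]
Naturality with respect to the augmentation $B \to \pi_0 B$, together with the identifications $\pi_0(B[T]) = (\pi_0 B)[T]$ and $\pi_0(B[T,T^{-1}]) = (\pi_0 B)[T,T^{-1}]$, produces a map of such sequences connecting the Bass sequence for $A$ to the one for $\pi_0 A$. Assuming inductively that $K_m(B') \cong K_m(\pi_0 B')$ holds for all simplicial rings $B'$ and all $m$ with $n \le m \le 1$, the five-lemma applied to this map of exact sequences, with the middle terms supplied by the inductive hypothesis applied to $B' \in \{A[T], A[T^{-1}], A[T, T^{-1}]\}$, yields the isomorphism $K_{n-1}(A) \cong K_{n-1}(\pi_0 A)$. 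The base case $n = 0$ (or $n=1$) is what was established in the previous paragraph.

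The main obstacle is thus the availability of Bass's fundamental theorem in the simplicial setting. The strategy would be to combine Zariski descent (Theorem \ref{thm:Zariski-descent}), applied to the standard cover $\P^1_A = \Spec(A[T]) \cup \Spec(A[T^{-1}])$ with intersection $\Spec(A[T,T^{-1}])$, with the projective bundle formula $K(\P^1_A) \simeq K(A)^{\oplus 2}$. The latter follows from the localizing property (Theorem \ref{thm:K-is-localizing}) via a semiorthogonal decomposition of $\Perf(\P^1_A)$ generated by $\sO$ and $\sO(-1)$, extending Thomason--Trobaugh's classical argument to the derived setting; granting it, the induction goes through formally.
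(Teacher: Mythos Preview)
Your proposal is correct and matches the paper's own argument essentially verbatim: the cases $n=0,1$ are handled via Proposition~\ref{prop:plusequalswaldhausen} and Lemma~\ref{ds.lemk1}, and the negative range by downward induction using the Bass fundamental theorem for derived rings (Theorem~\ref{thm:Bass-fundamental}), whose proof in turn rests on Zariski descent and the projective bundle formula for $\P^1$ exactly as you sketch. The only thing to note is that the paper establishes Theorem~\ref{thm:Bass-fundamental} in Section~\ref{sec:DerivedBlowUp} independently of Theorem~\ref{thm:identificationofthenegativekgroups}, so there is no circularity in invoking it here.
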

In \cite[Thm.~9.53]{Blumberg-Tabuada-Gepner} this is proved for connective $A_{\infty}$-ring spectra.
We offer the following alternative proof, which works for simplicial commutative rings, using some results of Section~\ref{sec:DerivedBlowUp}, because it fits nicely in the framework of the current paper.
\begin{proof}
By Proposition~\ref{prop:plusequalswaldhausen}, the claim is true for $n=0$. By
Lemma~\ref{ds.lemk1} it is true for $n=1$.
Since both versions of $K$-theory satisfy the Bass fundamental theorem (see Theorem~\ref{thm:Bass-fundamental} for the version for derived schemes), we deduce that the map in question is an isomorphism for all $n\leq 1$ by induction.
\end{proof}

\begin{lem}\label{lemma:nilinvarianceinsmalldegrees}
For a noetherian derived scheme $\dX$ there exists $i_0 \in \mathbb Z$ such that $K_i(\dX)\to K_i (t(\dX)_{\rm red}) $ is an isomorphism for $i\le i_0$.
\end{lem}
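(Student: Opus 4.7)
The plan is to reduce to the affine case via Zariski descent.

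\textbf{Affine case.} Suppose $\dX = \Spec A$ with $A$ noetherian. Factor the comparison map as
\[
K(A) \longrightarrow K(\pi_0 A) \longrightarrow K((\pi_0 A)_{\rm red}).
\]
By Theorem~\ref{thm:identificationofthenegativekgroups} the first arrow is an isomorphism on $K_i$ for $i\le 1$. For the second, set $R = \pi_0 A$ and let $N$ be its nilradical; since $R$ is noetherian, $N$ is nilpotent. I invoke the classical nil-invariance of non-positive $K$-theory of ordinary rings: $K_i(R) \to K_i(R/N)$ is an isomorphism for every $i\le 0$. For $i=0$ this is the bijective lifting of finitely generated projective modules along a nilpotent (hence Jacobson-radical) ideal. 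For $i<0$ one argues by downward induction using Bass's fundamental theorem, noting that $R[t]$, $R[t^{-1}]$, and $R[t,t^{-1}]$ have nilradicals $N[t]$, $N[t^{-1}]$, $N[t,t^{-1}]$, so reducing modulo the nilradical yields the corresponding polynomial rings over $R_{\rm red}$; the five-lemma then propagates nil-invariance from $K_i$ to $K_{i-1}$. Combining both isomorphisms gives $i_0 = 0$ in the affine case.

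\textbf{General case.} Let $F = \hofib(K(\dX) \to K(t\dX_{\rm red}))$; it suffices to show $\pi_n F(\dX) = 0$ for $n$ sufficiently small. Since $|\dX|$ is a noetherian topological space it is quasi-compact, so by Zariski descent (Theorem~\ref{thm:Zariski-descent}) the spectrum $F(\dX)$ is the totalisation of the cosimplicial spectrum $F(\dU_\bullet)$ attached to the \v{C}ech nerve of a finite affine open cover. The iterated intersections of these affines are in general only quasi-compact, but each is again a noetherian derived scheme covered by finitely many affines; refining the nerve in this way produces a bounded hypercover whose terms are finite disjoint unions of affines. The Bousfield--Kan spectral sequence
\[
E_2^{s,t}\Rightarrow \pi_{t-s}F(\dX)
\]
has $E_2^{s,t}=0$ for $t\le 0$ by the affine case applied termwise, and $E_2^{s,t}=0$ for $s>M$ by the boundedness of the refined hypercover, for some $M$ depending on $\dX$. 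Hence $\pi_n F(\dX) = 0$ for all $n\le -M$, which gives $K_n(\dX) \xrightarrow{\cong} K_n(t\dX_{\rm red})$ for $n\le -M$.

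\textbf{Main obstacle.} The principal subtlety lies in producing the bounded affine hypercover when $\dX$ is not semi-separated, so that intersections of affine opens need further refinement. This is resolved by iteratively refining each intersection by a finite affine cover; the process terminates for noetherian $\dX$ by a noetherian induction on the closed subsets of $|\dX|$, furnishing the finite bound $M$ used above.
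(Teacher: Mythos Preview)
Your affine case is correct and matches the paper's argument: Theorem~\ref{thm:identificationofthenegativekgroups} handles the passage to $\pi_0 A$, and classical nil-invariance for $K_i$ with $i\le 0$ handles the passage to the reduction, giving $i_0=0$.

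The general case has the right strategy (Zariski descent plus a spectral sequence with bounded horizontal strip), but your justification for the bounded affine hypercover is not valid. ``Noetherian induction on the closed subsets of $|\dX|$'' does not control the depth of an iterative refinement of \emph{open} covers; nothing in that induction forces the process to terminate after finitely many steps. Without a bound $M$, your Bousfield--Kan argument does not conclude.

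The paper avoids hypercovers entirely by a clean two-step \v{C}ech reduction that you are missing. The key observation is that any intersection of affine open subschemes of $\dX$ is an open subscheme of an affine scheme, hence \emph{separated}. So one application of the \v{C}ech spectral sequence for a finite affine cover reduces the statement to separated noetherian derived schemes. For separated schemes, intersections of affine opens are affine, so a second \v{C}ech spectral sequence (for a finite affine cover of each such intersection) reduces to the affine case. Two finite \v{C}ech covers give a concrete bound $M$ and the convergence you need. Replacing your noetherian-induction paragraph with this separatedness observation fixes the gap and in fact simplifies the argument.
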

\begin{proof}
We reduce the statement of the lemma first to separated and then to affine derived schemes. By the noetherian assumption, we find a finite Zariski covering $\mathcal{U}$ of $\dX$ by affine derived schemes.
In particular, every possible intersection of the involved open subschemes is separated. As $K$-theory satisfies Zariski descent (Theorem~\ref{thm:Zariski-descent}),
 there is a convergent spectral sequence
\[
 E_2^{pq}=H^p(\mathcal{U},K_{-q}) \Rightarrow K_{-p-q}(\dX).
\]
Hence the statement of the lemma is reduced to separated derived schemes. Now each of the finitely many intersections has a finite Zariski covering such that all of its intersections are affine.
This reduces the statement to affine derived schemes  by the same argument as above.

In the affine situation, the statement (with $i_{0}=0$) is a direct consequence of the previous Theorem \ref{thm:identificationofthenegativekgroups} and nil-invariance for non-positive algebraic $K$-theory of discrete rings.
The latter follows from the definition of negative $K$-theory \cite[Def.~III.4.1]{Weib13} through $K_0$ and nil-invariance of $K_0$ for rings \cite[Lemma~II.2.2]{Weib13}. 
\end{proof}

\section{Derived blow-ups}
\label{sec:DerivedBlowUp}

In this section we define  derived blow-ups of  affine schemes and prove a descent theorem for derived blow-up squares similar to Thomason's descent theorem for blow-ups in regularly immersed centers.
Using the same techniques we also prove a projective bundle theorem and a Bass fundamental theorem for derived schemes.

\subsection{Derived blow-up squares}\label{subsec:derived-blow-up-squares}

Consider a noetherian ring $A$ and  a finite sequence $\ba=(a_{0},\dots, a_{r}) \in A^{r+1}$.
Write $X=\Spec(A)$. In the following, we introduce the notion of the derived blow-up $\Bld_{\ba}(X)$ of $X$  and the Koszul derived subscheme $\dV(\ba)$ of $X$ associated to the sequence $\ba$.

For this we choose an auxiliary noetherian ring $A'$ and a regular sequence $\ba' \in A^{\prime, r+1}$ together with a map $f\colon A' \to A$ sending $\ba'$ to $\ba$ (e.g.~a polynomial ring).
Let $Y' = V((\ba')) \subseteq X'=\Spec(A')$, $\tilde X'$ the blow-up of $X'$ along $Y'$, $E'$ the exceptional divisor:
\begin{equation}\label{diag:regular-blow-up}
\begin{split}
\xymatrix{
\tilde X'  \ar[d]_{p'} &  E'  \ar[l]_{j'} \ar[d]^{q'} \\
X'  & Y' \ar[l]_{i'}
}
\end{split}
\end{equation}

\begin{defn}\label{dfn:derived-pullback}
We denote the derived pullback of \eqref{diag:regular-blow-up} along $f\colon X \to X'$ by
\begin{equation}\label{diag:derived-blow-up}
\begin{split}
\xymatrix{
 \tilde \dX \ar[d]_{p} & \dE \ar[l]_{j} \ar[d]^{q}  \\
X & \dY \ar[l]_{i} 
}
\end{split}
\end{equation}
and call $\Bld_{\ba}(X) = \tilde\dX$ the \emph{derived blow-up} of $X$ in $\ba$ and $\dV(\ba) = \dY$ the \emph{Koszul derived scheme} associated to $\ba$.
A square of the form \eqref{diag:derived-blow-up} is called \emph{derived blow-up square}.
We call $\dE$ the \emph{semi-derived exceptional divisor} and write $\dD = \tilde\dX \times_{X}^h \dY$ for  the \emph{derived exceptional divisor}.

We write $\Ko(A; \ba) $ for the derived ring corresponding to the affine derived scheme $\dV(\ba)$.
Its homotopy groups are the usual  Koszul homology groups of the sequence $\ba$.
\end{defn}

\begin{rmk}\label{rmk:helpfulremark}
The following diagram might help the reader to relate the terminology introduced in the previous Definition~\ref{dfn:derived-pullback}.
\begin{equation}\label{diag:helpfulsquare}
\begin{split}
 \xymatrix@R=2.5ex@C=1.2ex{ 
              && \tilde\dX \ar[ddd] \ar[drr]  &  &                           &   &&  \dE \ar[drr] \ar'[d][ddd] \ar[lllll]    &&                                         \\
              &&                             &   & \tilde X'  \ar[ddd]      &   &&                                      &&  \ar[lllll] \ar[ddd] E'     \\
X\phantom{'}\ar@{=}[drr]  &&                             &   &            \ar'[ll][llll]               & Y\phantom{'} \ar@{-}[l]\ar[drr] &&                                      &&                                         \\
              && X\phantom{'}  \ar@{->}[drr]_(0.4)f             &   &                           &   && \dY\phantom{'} \ar[drr] \ar'[lll][lllll]    &&                                         \\
              &&                             &   &  X'                       &   &&                                      &&  \ar[lllll] Y'            
}
\end{split}
\end{equation} 
Here we set $Y=V((\ba))$.
The left, the right, the bottom and the top faces of this cube are derived cartesian, whereas the front and the back face need not to be derived cartesian.
In particular, the morphism $\dE\to\dD$ is not an equivalence in general, even though the underlying schemes $t\dE\xrightarrow{\simeq}t\dD$ are isomorphic.
\end{rmk}

We will also consider the following thickenings. For $n\in \N$ write $\ba(n)=(a_{0}^{n}, \dots, a_{r}^{n})\in A^{r+1}$. 
In the situation above we write $Y'(n) = V((\ba'(n)))$ 
and $E'(n)= \tilde X' \times_{X'} Y'(n)$ for the corresponding thickening of the exceptional divisor.
As before we define $\dY(n)$ and $\dE(n)$ by a derived pullback along $X\to X'$. Note that $\dY(n)$ is just the Koszul derived scheme associated to the sequence $\ba(n)$.
Finally, we write $\dD(n) = \tilde\dX \times_{X}^{h} \dY(n)$ for the thickened derived exceptional divisor.

\begin{rmk}\label{rmk:ordinary-and-derived-blow-up}
We always have a closed immersion of the ordinary blow-up $\Bl_{(\ba)}(X)$ into the underlying scheme  $t(\Bld_{\ba}(X)) \cong X \times_{X'} \Bl_{(\ba')}(X')$ of the derived blow-up.
\end{rmk}

In the remainder of this subsection we prove that, up to equivalence, the notions introduced above do not depend on the choices made.

\begin{lem}\label{lem:V-derived-cartesian}
Let $A'\to A$ be a homomorphism of rings sending the regular sequence $\ba'$ to the regular sequence $\ba$. Then the diagram of schemes
\[
\xymatrix{
V((\ba)) \ar[r] \ar[d] & V((\ba')) \ar[d] \\
\Spec(A) \ar[r] & \Spec(A')
}
\]
is derived cartesian.
\end{lem}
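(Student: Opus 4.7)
The plan is to reduce the statement to a computation of derived tensor products of modules and to exploit the regularity of both sequences through the Koszul resolution.

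First I would unwind what it means for the square to be derived cartesian. Setting $B' = A'/(\ba')$ and $B = A/(\ba)$, the claim is that the canonical map of derived rings
\[
A \otimes_{A'}^{\mathbb{L}} B' \longrightarrow B
\]
is an equivalence. Since the forgetful functor from simplicial commutative $A'$-algebras to simplicial $A'$-modules preserves and detects equivalences, it is enough to verify this after forgetting down to $A'$-modules, i.e.\ to check that the higher Tor-sheaves $\mathcal{T}or_n^{A'}(A, B')$ vanish for $n > 0$ and that $A \otimes_{A'} B' \cong B$ for $n = 0$. The latter identification is clear from the hypothesis that $f(\ba') = \ba$.

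Next I would compute the Tor-groups using the Koszul complex. Because $\ba'$ is a regular sequence in $A'$, the Koszul complex $K_\bullet(A'; \ba')$ is a free $A'$-resolution of $B'$. Hence
\[
A \otimes_{A'}^{\mathbb{L}} B' \simeq A \otimes_{A'} K_\bullet(A'; \ba') \cong K_\bullet(A; \ba),
\]
so the higher Tor groups in question are identified with the higher homology of the Koszul complex of the sequence $\ba$ in $A$. Since $\ba$ is by hypothesis also a regular sequence in $A$, this Koszul complex is acyclic in positive degrees and has $H_0 = A/(\ba) = B$, which gives the desired vanishing and identification.

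The main subtle point is the passage from the equivalence of underlying $A'$-modules to an equivalence of derived rings; this is handled once and for all by the fact that the forgetful functor $\mathbf{sRing}_{A'} \to \mathrm{Mod}(A')$ is conservative on equivalences, so there is no genuine obstacle beyond correctly interpreting the Koszul calculation in the derived setting. (Alternatively one can simply observe that $K_\bullet(A; \ba)$ already represents $\Spec(A) \times_{\Spec(A')}^{h} V((\ba'))$ as a derived ring, since the Koszul cdga is a model for $A \otimes_{A'}^{\mathbb{L}} B'$ via its free simplicial resolution, and then invoke regularity of $\ba$ in $A$ to identify it with the discrete ring $B$.)
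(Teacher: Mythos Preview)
Your proof is correct and follows essentially the same approach as the paper: both reduce to showing that $\Tor^{A'}_{*}(A, A'/(\ba'))$ vanishes in positive degrees, compute this via the Koszul resolution of $A'/(\ba')$ afforded by regularity of $\ba'$, and then invoke regularity of $\ba$ to conclude that the resulting Koszul complex $K_\bullet(A;\ba)$ is acyclic in positive degrees. The paper's version is terser and does not spell out the passage from module-level to ring-level equivalence, relying implicitly on the earlier description of derived fibre products in terms of Tor-sheaves.
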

\begin{proof}
The square is cartesian as a square of schemes. Hence it suffices to show that $\Tor^{A'}_{*}(A, A'/(\ba'))$ vanishes in positive degrees.
Since $\ba'$ is a regular sequence, we can compute these using the Koszul complex of $\ba'$. Tensoring with $A$ then gives the Koszul complex of the sequence $\ba$. It is exact in positive degrees, since $\ba$ was assumed to be regular, too.
\end{proof}

\begin{lem}
In the situation of Lemma~\ref{lem:V-derived-cartesian}, write $\tilde X = \Bl_{V((\ba))}X$ for the ordinary blow-up of
$X=\Spec(A)$ in $V((\ba))$,  $E$ for the exceptional divisor, and $E(n)$ for its thickenings defined by the sequence $\ba(n)$,  and similarly for the primed version. Then the diagrams 
\[
\xymatrix{
\tilde X \ar[r]\ar[d]_{p} & \tilde X' \ar[d]^{p'} \\
X \ar[r] & X'
} \quad \text{ and } \quad
\xymatrix{
E(n) \ar[r]\ar[d] & E'(n) \ar[d] \\
X \ar[r] & X'
}
\]
are derived cartesian.
\end{lem}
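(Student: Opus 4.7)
The plan is to verify both squares Zariski-locally on $\tilde X'$ using the standard affine charts of the blow-up of a regular sequence, and then to apply a Koszul resolution argument analogous to the one in Lemma~\ref{lem:V-derived-cartesian}.

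Since $\ba'$ is regular, $\tilde X'$ is covered by the standard affine opens $U'_i = \Spec B'_i$ for $i=0,\dots,r$, with
\[ B'_i = A'[u_j : j\neq i] / (a'_j - u_j a'_i)_{j\neq i}, \]
and analogously $\tilde X$ is covered by $U_i = \Spec B_i$ for the sequence $\ba$. It is a classical consequence of the regularity of $\ba'$ (resp.\ $\ba$) that the defining sequence $(a'_j - u_j a'_i)_{j\neq i}$ is itself regular in the polynomial ring $A'[u_j]_{j\neq i}$ (resp.\ $(a_j - u_j a_i)_{j\neq i}$ in $A[u_j]_{j\neq i}$), and that $a'_i$ (resp.\ $a_i$) remains a non-zero-divisor in $B'_i$ (resp.\ $B_i$), cutting out the exceptional divisor on that chart. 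Moreover, the relations $a'_j = u_j a'_i$ in $B'_i$ (and analogously downstairs) give $(\ba'(n))B'_i = ((a'_i)^n)$ and $(\ba(n))B_i = (a_i^n)$.

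For the first square, on the $i$-th chart I compute
\[ B'_i \otimes^L_{A'} A \simeq B'_i \otimes^L_{A'[u_j]} A[u_j], \]
using flatness of $A'[u_j]$ over $A'$. Resolving $B'_i$ by its Koszul complex over $A'[u_j]$, which is a free resolution by regularity of $(a'_j - u_j a'_i)$, and tensoring yields the Koszul complex of the sequence $(a_j - u_j a_i)$ over $A[u_j]$; this is concentrated in degree zero by regularity of the latter sequence, giving $B'_i \otimes^L_{A'} A \simeq B_i$. Hence the derived pullback of $\tilde X' \to X'$ along $X \to X'$ is discrete and equal to $\tilde X$. For the second square, I use that the first part allows one to rewrite
\[ E'(n) \times^h_{X'} X \simeq E'(n) \times^h_{\tilde X'} \tilde X. \]
On the $i$-th chart $E'(n) \cap U'_i = \Spec(B'_i/(a'_i)^n)$, and the regularity of $a'_i$ in $B'_i$ gives a free resolution $B'_i/(a'_i)^n \simeq [B'_i \xrightarrow{(a'_i)^n} B'_i]$. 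Tensoring over $B'_i$ with $B_i$ yields $[B_i \xrightarrow{a_i^n} B_i]$, which by the regularity of $a_i$ in $B_i$ is quasi-isomorphic to $B_i/a_i^n = E(n) \cap U_i$.

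The main technical input, and the main potential obstacle, is the regularity of the sequences $(a'_j - u_j a'_i)_{j\neq i}$ in $A'[u_j]$ and the non-zero-divisor property of $a'_i$ in $B'_i$ (and their unprimed analogs). These are standard properties of blow-ups along regular immersions, but they are what makes the Koszul reductions above collapse to discrete objects. Everything else is formal bookkeeping with derived tensor products and Zariski gluing.
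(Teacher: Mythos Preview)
Your approach is essentially the same as the paper's: both reduce to the standard affine charts $U_i$ of the blow-up and compute the derived fibre product there via the Koszul resolution of the chart ring, invoking Lemma~\ref{lem:V-derived-cartesian} (explicitly in the paper, implicitly in your Koszul argument). The one difference is that the paper first localizes at a point $x\in X$ before claiming that $(a_j - u_j a_i)_{j\neq i}$ is a regular sequence in $A[u_j]_{j\neq i}$: the deduction from Fulton's Lemma~A.6.1 uses that every \emph{permutation} of $\ba$ is regular, which holds in noetherian local rings but not in general. Since exactness of the Koszul complex is a local property, your argument goes through once this localization is inserted, and you have correctly flagged this regularity as the main technical input; just be aware that the global assertion is not entirely innocent without it.
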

\begin{proof}
We begin with the left-hand square.
Since $\ba$ is a regular sequence, we have 
$\tilde X = \Proj(A[T_{0}, \dots, T_{r}]/(a_{i}T_{j} - a_{j}T_{i})_{1\leq i,j\leq r})$ \cite[Ch.~I, Thm.~1]{Micali} and similarly for $\tilde X'$, and we see that $\tilde X \cong X \times_{X'} \tilde X'$.

To prove that the square is derived cartesian, we  argue locally. Consider a point $x\in X$ with image $x' \in X'$. If $x\not\in V((\ba))$, and hence $x' \not\in V((\ba'))$, then $p$ respectively $p'$ is an isomorphism near $x$ respectively $x'$, and we are done.
Now assume that $x\in V((\ba))$. Then the image of the sequence $\ba$ in the localization of $A$ at $x$ is still a regular sequence. The analog is true for $x', A'$, and $\ba'$. Hence we may assume that $A, A'$ are noetherian local rings. Then any permutation of the sequence $\ba$ is again a regular sequence. Hence we can deduce from \cite[Lem.~A.6.1]{Fulton} that  the standard affine open subset $U_{i} \cong\Spec(A[S_{0}, \dots,\hat S_{i} ,\dots, S_{r}]/(a_{i}S_{j} - a_{j})_{0\leq j \leq r, j\not=i} )$ of $\tilde X$ where  $\sO_{\tilde X}(1)$ is generated by $a_{i}$ is defined by the regular sequence $(a_{i}S_{0}-a_{0}, \dots)$ in $A[S_{0}, \dots, \hat S_{i}, \dots, S_{r}]$.
Again, the same holds with primes.
From Lemma~\ref{lem:V-derived-cartesian} we deduce that
\[
\xymatrix{
U_{i} \ar[d]\ar[r] & U'_{i} \ar[d] \\
\A^{r}_{X} \ar[r] & \A^{r}_{X'}
}
\]
is derived cartesian. This implies the statement for the first square, since $\A^{r}_{X'} \to X'$ is flat.

For the second square  we observe that $E$  and also its thickening $E(n)$ is locally cut out in $\tilde X$ by one regular element $a_{i}$ respectively $a_{i}^{n}$, and similar for $E', E'(n)$. We conclude by what we have already shown and Lemma~\ref{lem:V-derived-cartesian}.
\end{proof}

Summarizing we have:
\begin{lem}\label{lem:independence}
Up to equivalence, the Koszul derived scheme $\dY=\dV(\ba)$, the derived blow-up square \eqref{diag:derived-blow-up},  the thickenings $\dY(n), \dE(n)$, and hence $\dD(n)$ do not depend on  the choice of  $A'$ and $\ba'$.
\end{lem}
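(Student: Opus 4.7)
The strategy is to compare any choice of auxiliary data $(A',\ba',f\colon A'\to A)$ to a single canonical (``universal'') choice coming from a polynomial ring, and then invoke the two preceding lemmas together with transitivity of derived pullback.

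Let $P=\Z[T_0,\dots,T_r]$ equipped with its tautological regular sequence $\underline T=(T_0,\dots,T_r)$, and let $e\colon P\to A$ be the evaluation $T_i\mapsto a_i$. Given any auxiliary choice $(A',\ba',f)$ as in the paragraph before Definition~\ref{dfn:derived-pullback}, the assignment $T_i\mapsto a'_i$ defines a ring map $g\colon P\to A'$ that factors $e$ as $f\circ g=e$. Since both $\underline T$ in $P$ and $\ba'=g(\underline T)$ in $A'$ are regular sequences, Lemma~\ref{lem:V-derived-cartesian} applied to $g$ shows that $V((\ba'))$ is the derived pullback of $V((\underline T))$ along $\Spec(g)$, and the unnamed lemma preceding Lemma~\ref{lem:independence} applied to $g$ shows that the ordinary blow-up square of $\Spec(A')$ in $V((\ba'))$, together with its thickened exceptional divisors $E'(n)$ (produced from the regular sequence $\ba'(n)$, which remains regular because powers of a regular sequence are regular), is the derived pullback along $\Spec(g)$ of the corresponding universal blow-up square over $\Spec(P)$ associated to $\underline T(n)=(T_0^n,\dots,T_r^n)$.

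Taking a further derived pullback along $\Spec(f)\colon X\to\Spec(A')$ and composing pullback squares through the factorization $X\to\Spec(A')\to\Spec(P)$, we conclude that $\dV(\ba)$, the derived blow-up square \eqref{diag:derived-blow-up}, and the thickenings $\dY(n)$ and $\dE(n)$ are each canonically equivalent to the derived pullback along $\Spec(e)\colon X\to\Spec(P)$ of the corresponding objects built from the universal pair $(P,\underline T(n))$. The latter depend only on $A$ and the sequence $\ba$, not on $(A',\ba',f)$, which gives the asserted independence. The statement for $\dD(n)=\tilde\dX\times_X^{\dho}\dY(n)$ then follows formally from the independence of $\tilde\dX$ and $\dY(n)$ together with functoriality of the derived fibre product.

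\textbf{Main obstacle.} There is no real obstacle: the two preceding lemmas have already isolated the technical content (the vanishing of higher Tor computed by the Koszul complex, and the local description of the blow-up of a regular ideal). The only mild point to verify is that powers of a regular sequence are again regular, so that both universal and intermediate comparisons also apply to the thickenings $\ba(n)$.
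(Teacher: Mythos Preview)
Your proof is correct and follows essentially the same approach as the paper: both reduce to the universal choice $P=\Z[T_0,\dots,T_r]$ with its tautological regular sequence, and then invoke the two preceding lemmas together with composability of derived pullbacks. The paper phrases this as ``given two choices $(A'_1,\ba'_1)$ and $(A'_2,\ba'_2)$, find a common refinement $A'_3=\Z[T_0,\dots,T_r]$ mapping to both,'' while you phrase it as ``compare any choice directly to the universal one''; these are the same argument. One minor remark: your aside about powers of a regular sequence being regular is not actually needed, since the unnamed lemma preceding Lemma~\ref{lem:independence} already handles the thickenings $E(n)$ directly (its proof cuts $E(n)$ out locally in $\tilde X$ by a single regular element $a_i^n$, rather than appealing to regularity of the full sequence $\ba(n)$).
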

\begin{proof}
Given two rings $A'_{1}, A'_{2}$ with maps $A'_{i} \to A$ and regular sequences $\ba'_{i}$ mapping to $\ba$, we find a third ring $A'_{3}$ with maps $A'_{3} \to A'_{i}, i=1,2$, and a regular sequence $\ba'_{3}$ mapping to $\ba'_{1}, \ba'_{2}$ respectively (take $A'_{3} = \Z[T_{0}, \dots, T_{r}]$, $\ba'_{3}=(T_{0}, \dots, T_{r})$).
Hence the claim follows from the preceding lemmas.
\end{proof}

\subsection{Descent for derived blow-up squares}

The goal of this subsection is to prove the following descent statement for the derived
blow up square \eqref{diag:derived-blow-up}. It generalizes Thomason's descent theorem for
blow-ups in regularly immersed centers \cite{Thomason-blowup}.
\begin{thm}\label{thm:descent-for-derived-blow-up}
The square of non-connective $K$-theory spectra
\[
\xymatrix{
K(\tilde \dX) \ar[r]^{j^{*}} & K(\dE) \\
K(X) \ar[u]^{p^{*}} \ar[r]^{i^{*}} & K(\dY) \ar[u]_{q^{*}}
}
\]
is homotopy cartesian.
\end{thm}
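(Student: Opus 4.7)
My strategy is to reduce to Thomason's original result~\cite{Thomason-blowup} for the classical regular blow-up via derived base change. By construction the derived blow-up square~\eqref{diag:derived-blow-up} is obtained as the derived pullback along $f\colon X \to X'$ of the regular blow-up square of $X'$ along $Y' = V((\ba'))$, for which Thomason already proves homotopy cartesianness. His argument rests on exhibiting semi-orthogonal decompositions
\[
\Perf(\tilde X') \simeq \bigl\langle p'^{*}\Perf(X'),\; \Perf(Y')_{1},\; \ldots,\; \Perf(Y')_{r}\bigr\rangle,
\]
\[
\Perf(E') \simeq \bigl\langle \Perf(Y')_{0},\; \Perf(Y')_{1},\; \ldots,\; \Perf(Y')_{r}\bigr\rangle,
\]
where each $\Perf(Y')_{i}$ denotes a copy of $\Perf(Y')$ embedded via the appropriate twist of the tautological line bundle on the projective bundle $E' = \P(N_{Y'/X'})$, and such that $j'^{*}$ sends $p'^{*}\Perf(X')$ into $\Perf(Y')_{0}$ via $i'^{*}$ and is (up to normalization) the identity on the remaining $r$ pieces.

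These decompositions transport to the derived setting by derived pullback along $f$. Concretely, there is a base change equivalence $\Perf(W') \otimes_{\Perf(X')} \Perf(X) \simeq \Perf(W' \times^{\dho}_{X'} X)$ for each $W' \in \{X', Y', \tilde X', E'\}$; this holds because perfect modules assemble into a compactly generated symmetric monoidal stable $\infty$-category and derived pullback on $\QCoh$ is tensoring over $\QCoh(X')$ (cf.~\cite{BFN}). Tensoring the classical decompositions with $\Perf(X)$ over $\Perf(X')$ yields analogous decompositions
\[
\Perf(\tilde\dX) \simeq \bigl\langle p^{*}\Perf(X),\; \Perf(\dY)_{1},\; \ldots,\; \Perf(\dY)_{r}\bigr\rangle,
\]
\[
\Perf(\dE) \simeq \bigl\langle \Perf(\dY)_{0},\; \Perf(\dY)_{1},\; \ldots,\; \Perf(\dY)_{r}\bigr\rangle,
\]
compatibly with $j^{*}$. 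The required fully faithfulness and ext-vanishings defining semi-orthogonality are inherited from the classical case by the tensor product, since all generators involved are perfect over $\Perf(X')$. Along the way this gives, as a special case, the projective bundle formula for $\dE = \P(N_{Y'/X'}|_{\dY}) \to \dY$ needed in the first decomposition.

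By Theorem~\ref{thm:K-is-localizing}, these decompositions produce splittings
\[
K(\tilde\dX) \simeq K(X) \oplus \bigoplus_{i=1}^{r} K(\dY), \qquad K(\dE) \simeq \bigoplus_{i=0}^{r} K(\dY),
\]
with respect to which $j^{*}$ acts as $i^{*}$ on the $K(X)$ summand (via the derived base change equivalence $j^{*}p^{*} \simeq q^{*}i^{*}$, which uses that the lower square of~\eqref{diag:derived-blow-up} is derived cartesian) and as the identity on the remaining $r$ summands. Both horizontal cofibres in the $K$-theory square thus identify canonically with $\bigoplus_{i=1}^{r} K(\dY)$ and the induced map between them is an equivalence, so the square is homotopy cartesian. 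The principal obstacle is the base-change statement for $\Perf$ together with the verification that Thomason's triangulated-category decompositions lift to the $\infty$-categorical setting in a way compatible with the tensor product over $\Perf(X')$; equivalently, that the embeddings $p^{*}$ and $j_{*}(q^{*}(-)\otimes\sO(-i))$ are fully faithful on $\Perf$ in the derived setting. This in turn reduces to the relative cohomology computations $p_{*}\sO_{\tilde\dX} \simeq \sO_{X}$ and $p_{*}j_{*}\sO(-i) \simeq 0$ for $1\leq i \leq r$, which can be checked by derived pullback from the regular case where they are classical.
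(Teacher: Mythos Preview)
Your approach is essentially the same as the paper's: both transport Thomason's semi-orthogonal decompositions from the regular blow-up square over $X'$ to the derived blow-up square over $X$ via derived base change, then read off the homotopy cartesianness from the resulting filtrations on $\Perf(\tilde\dX)$ and $\Perf(\dE)$. The paper carries this out by hand---Lemmas~\ref{lem:pullbacks-generate}--\ref{lem:subquotients-equivalent} reprove generation, fully faithfulness, and the ext-vanishings in the derived setting, reducing each to the classical computation by base change---whereas you package the same reduction more abstractly via the equivalence $\Perf(W')\otimes_{\Perf(X')}\Perf(X)\simeq\Perf(W'\times^{\dho}_{X'}X)$. Both routes bottom out in the same cohomological facts you name at the end ($p_{*}\sO_{\tilde\dX}\simeq\sO_{X}$ and $q_{*}\sO_{\dE}(-k)\simeq 0$ for $1\le k\le r$), so neither avoids real work the other does.

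One imprecision to flag: $j^{*}$ does \emph{not} act as the identity on the last $r$ summands. Since $j^{*}j_{*}\sO_{\dE}\simeq\sO_{\dE}\oplus\sO_{\dE}(1)[1]$ (equation~\eqref{eq:jupperstarjlowerstar}), the induced map on $K$-theory is only upper triangular with identities on the diagonal with respect to the filtration by twists. This is still an equivalence on the vertical cofibres, so your conclusion stands, but the argument should be phrased in terms of $j^{*}$ respecting the filtrations $\Perf^{l}$ and inducing equivalences on associated graded pieces (the paper's Lemma~\ref{lem:subquotients-equivalent}), not in terms of a direct-sum splitting preserved by $j^{*}$.
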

The proof follows closely \cite[Sec.~1]{CHSW} with a few additional twists.
If $\sD$ is a small stable $\infty$-category and  $S$ is a set of objects of $\sD$, we write $\langle S \rangle \subseteq \sD$ for the smallest full stable subcategory of $\sD$ which contains $S$ and is closed under retracts in $\sD$. We say that $S$ generates $\sD$ if $\langle S \rangle = \sD$.

A useful criterion is the following.
Assume that  $\sC$ is a compactly generated stable $\infty$-category \cite[Def.~5.5.7.1]{htt}. Write $\sC^{\omega}$ for the small stable idempotent complete subcategory of its compact objects. Then $\Ind(\sC^{\omega}) \simeq \sC$. If $S$ is a set of objects of $\sC^{\omega}$, we have $\langle S \rangle \subseteq \sC^{\omega}$, and this  induces a fully faithful functor $i\colon \Ind(\langle S \rangle) \to \Ind(\sC^{\omega}) \simeq \sC$. The functor $i$ admits a right adjoint~$r$ \cite[Prop.~5.3.5.13]{htt}. 
If the right orthogonal of $S$ in $\sC$ vanishes, then $r$ is fully faithful, too, hence $i$ is an equivalence, hence $\langle S \rangle \simeq \sC^{\omega}$ \cite[Lem.~5.4.2.4]{htt}, i.e.~$S$ generates $\sC^{\omega}$.

\begin{lem}\label{lem:pullbacks-generate}
Let $f\colon \dX\to \dY$ be an affine morphism of qcqs derived schemes and let  $S$ be a set of perfect modules that generates $\Perf(\dY)$. Then the set $\{f^{*}F\,|\,F\in S\}$ generates $\Perf(\dX)$.
\end{lem}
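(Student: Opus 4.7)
My plan is to apply the generation criterion recalled in the paragraph immediately preceding the lemma to the compactly generated stable $\infty$-category $\sC=\QCoh(\dX)$ and the set of objects $T=\{f^{*}F\mid F\in S\}$. Since $f^{*}$ is a symmetric monoidal colimit-preserving functor between stable presentable $\infty$-categories, it takes dualizable objects to dualizable ones, so $T\subseteq \Perf(\dX)=\QCoh(\dX)^{\omega}$ by the proposition preceding the lemma. It will therefore suffice to show that the right orthogonal $T^{\perp}$ in $\QCoh(\dX)$ vanishes.

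The first move will be to transport this orthogonality condition to $\dY$ via the adjunction $f^{*}\dashv f_{*}$: a module $G\in\QCoh(\dX)$ lies in $T^{\perp}$ if and only if $\Hom(F,f_{*}G)\simeq 0$ for every $F\in S$. Because $S$ generates $\Perf(\dY)$ by hypothesis, and $\Perf(\dY)=\QCoh(\dY)^{\omega}$ is a set of compact generators of $\QCoh(\dY)$ by the proposition preceding the lemma, the right orthogonal of $S$ in $\QCoh(\dY)$ is zero. This forces $f_{*}G\simeq 0$.

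To finish I will invoke that the pushforward $f_{*}\colon\QCoh(\dX)\to\QCoh(\dY)$ is conservative whenever $f$ is affine. This is the only geometric input of the argument and is the one place where the hypothesis on $f$ is used; it is therefore where I expect the single mild obstacle to lie. Working locally on $\dY$ the question reduces to an affine situation $\dY=\Spec(B)$, $\dX=\Spec(A)$, in which case $f_{*}$ identifies with the restriction-of-scalars functor $\dMod(A)\to\dMod(B)$ along $B\to A$, and the underlying spectrum of an $A$-module coincides with that of its restriction to $B$. Consequently $f_{*}G\simeq 0$ implies $G\simeq 0$, which by the criterion yields $\langle T\rangle=\Perf(\dX)$, as claimed.

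The remainder of the proof is purely formal: all the work happens in the adjunction unwinding of step two, and the substantive ingredients (compact generation of $\QCoh$ on qcqs derived schemes, and conservativity of $f_{*}$ for affine $f$) are already in place.
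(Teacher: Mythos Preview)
Your proof is correct and follows essentially the same route as the paper's: use the adjunction $f^{*}\dashv f_{*}$ to translate right-orthogonality into $f_{*}G\simeq 0$, then localize on $\dY$ and use that for an affine morphism $f_{*}$ is restriction of scalars, hence conservative. If anything you are slightly more explicit than the paper about why $S^{\perp}=0$ in $\QCoh(\dY)$ (the right orthogonal of $S$ equals that of $\langle S\rangle=\Perf(\dY)$, which vanishes by compact generation).
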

\begin{proof}
First note that $f^{*}$ preserves perfect modules.

Now take any $G\in \QCoh(\dX)$ and assume that $\Hom(f^{*}F, G) \simeq 0$ for all $F\in S$. By adjunction we get $\Hom(F, f_{*}G) \simeq 0$ for all $F\in S$, 
and hence $f_{*}G \simeq 0$.

We claim that $G\simeq 0$. This is a local question, so we may assume that $\dY$ and hence $\dX$ are affine, say $\dX=\Spec(B), \dY=\Spec(A)$. In this case $\QCoh(\dX) \simeq \Mod(B)$ and similarly for $\dY$, and $f_{*}$ is equivalent to the forgetful functor $\Mod(B) \to \Mod(A)$, which detects 0-objects.  
\end{proof}

We now consider the derived blow-up square \eqref{diag:derived-blow-up} and retain the notation from the beginning of Subsection~\ref{subsec:derived-blow-up-squares}.
We denote the morphisms induced from $f\colon X \to X'$ by derived base change by $\tilde f\colon \tilde \dX \to \tilde X'$, $f_{\dY}\colon \dY \to Y'$, and \mbox{$f_{\dE}\colon \dE \to E'$}, respectively.
Since affine morphisms are stable under base change, all these are affine.

We write $\sO_{\tilde \dX}(1) = \tilde f^{*}\sO_{\tilde X'}(1)$, etc.
\begin{lem}\label{lem:explicit-generators} We have:
\begin{enumerate}
\item[(i)]  $\Perf(\tilde \dX)$ is generated by $\sO_{\tilde \dX}, j_{*}\sO_{\dE} \otimes \sO_{\tilde \dX}(-l)$ for $l=1, \dots, r$.
\item[(ii)] $\Perf(\dE)$  is generated by $\sO_{\dE}(-l)$, for $l=0, \dots, r$.
\end{enumerate}
\end{lem}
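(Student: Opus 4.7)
The plan is to reduce both statements to the classical regular-sequence case by pulling back along the affine morphisms $\tilde f\colon\tilde\dX\to\tilde X'$ and $f_\dE\colon\dE\to E'$ that arise from the defining derived cartesian squares. These maps are affine, being derived base changes of the affine morphism $f\colon X\to X'$. By Lemma~\ref{lem:pullbacks-generate} it therefore suffices to exhibit suitable generators for $\Perf(\tilde X')$ and $\Perf(E')$ on the auxiliary regular side and to verify that they pull back to the sheaves listed in (i) and (ii).

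For (ii), the exceptional divisor $E'\to Y'$ of the regular blow-up is the projective bundle $\P^r_{Y'}$, so Beilinson's projective bundle decomposition shows that $\Perf(E')$ is generated by $q'^{*}\Perf(Y')\otimes\sO_{E'}(-l)$ for $l=0,\dots,r$. Since $Y'=\Spec(A'/(\ba'))$ is affine, $\Perf(Y')$ is generated by the structure sheaf, so $\Perf(E')$ is generated by $\sO_{E'}(-l)$, $l=0,\dots,r$. Pulling back under the affine morphism $f_\dE^{*}$ yields the sheaves $\sO_\dE(-l)$, and Lemma~\ref{lem:pullbacks-generate} gives~(ii).

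For (i), I would invoke Thomason's blow-up formula \cite{Thomason-blowup} for the regular blow-up $\tilde X'\to X'$: $\Perf(\tilde X')$ admits a semi-orthogonal decomposition with pieces $p'^{*}\Perf(X')$ and $j'_{*}\bigl(q'^{*}\Perf(Y')\otimes\sO_{E'}(-l)\bigr)$ for $l=1,\dots,r$. Since $X'$ and $Y'$ are affine, these pieces are generated by $\sO_{\tilde X'}$ and by $j'_{*}\sO_{E'}\otimes\sO_{\tilde X'}(-l)$, respectively. The pullback $\tilde f^{*}$ sends $\sO_{\tilde X'}$ to $\sO_{\tilde\dX}$, and by the monoidality of $\tilde f^{*}$,
\[
\tilde f^{*}\bigl(j'_{*}\sO_{E'}\otimes\sO_{\tilde X'}(-l)\bigr)\simeq \tilde f^{*}j'_{*}\sO_{E'}\otimes\sO_{\tilde\dX}(-l).
\]
The square relating $j',j,f_\dE,\tilde f$ is derived cartesian by construction (Lemma~\ref{lem:independence} and Remark~\ref{rmk:helpfulremark}), and $j'$ is a closed immersion, hence proper. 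Derived proper base change therefore supplies an equivalence $\tilde f^{*}j'_{*}\sO_{E'}\simeq j_{*}f_\dE^{*}\sO_{E'}=j_{*}\sO_\dE$. A final application of Lemma~\ref{lem:pullbacks-generate} identifies the pulled-back generators with those listed in~(i).

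The main subtlety is the use of derived proper base change for the closed immersion $j'$ along the (generally non-flat) affine morphism $\tilde f$: it is precisely this step that necessitates working in the derived framework, since on underlying ordinary schemes the naive base-change morphism typically fails to be an isomorphism. Once the base-change equivalence is in hand, the generation assertions follow formally from Thomason's and Beilinson's classical decompositions together with the general criterion of Lemma~\ref{lem:pullbacks-generate}.
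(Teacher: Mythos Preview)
Your proposal is correct and follows essentially the same route as the paper: invoke the classical generators for $\Perf(\tilde X')$ and $\Perf(E')$ in the regular case (the paper cites \cite[Lemma~1.2]{CHSW}, which packages the Thomason and Beilinson decompositions you name), transport them along the affine maps $\tilde f$ and $f_{\dE}$ via Lemma~\ref{lem:pullbacks-generate}, and identify $\tilde f^{*}j'_{*}\sO_{E'}\simeq j_{*}\sO_{\dE}$ using monoidality of $\tilde f^{*}$ and derived base change for the relevant derived cartesian square. Your emphasis on the base-change step as the place where the derived framework is essential is apt and matches the paper's treatment.
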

\begin{proof}
  By \cite[Lemma 1.2]{CHSW} the corresponding statements are true for the underived
  blow-up in diagram~\eqref{diag:regular-blow-up} (use that in our situation $X'$ and $Y'$
  are affine, hence $\Perf(X')$ is generated by $\sO_{X'}$ and $\Perf(Y')$ by $ \sO_{Y'}$). Using Lemma~\ref{lem:pullbacks-generate} we deduce that
  $\Perf(\tilde \dX)$ is generated by $\sO_{\tilde \dX}$ and
\[
 \tilde f^{*}(j'_{*}\sO_{E'} \otimes \sO_{\tilde X'}(-l)) \simeq j_{*}\sO_{\dE} \otimes \sO_{\tilde \dX}(-l)
\]
for  $l=1, \dots , r$. Here we used that $\tilde f^{*}$ is symmetric monoidal and base change \cite[Cor.~3.4.2.2]{sag} for the derived cartesian square
\[
\xymatrix{
\dE \ar[r]^{f_{\dE}} \ar[d]_-{j}     & E' \ar[d]^-{j'}\phantom{.} \\
\tilde \dX \ar[r]^{\tilde f} & \tilde X'.
}
\]
This proves (i). Even easier we get (ii).
\end{proof}

\begin{lem}\label{lem:fully-faithful}
The functors
$p^{*}\colon \Perf(X) \to \Perf(\tilde \dX)$ and
$q^{*}\colon \Perf(\dY) \to \Perf(\dE)$
are fully faithful.
\end{lem}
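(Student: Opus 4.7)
The plan is to show that both $p^*$ and $q^*$ are left adjoints whose unit maps $\id \to p_*p^*$ and $\id \to q_*q^*$ are equivalences on perfect objects; this is equivalent to full faithfulness. Since $p^*, q^*$ are symmetric monoidal and the source objects $F$ are dualizable, the projection formula gives $p_*p^*F \simeq F \otimes p_*\sO_{\tilde\dX}$ and $q_*q^*F \simeq F \otimes q_*\sO_{\dE}$, so everything reduces to proving
\[
\sO_X \xrightarrow{\simeq} p_*\sO_{\tilde\dX} \qquad \text{and} \qquad \sO_{\dY} \xrightarrow{\simeq} q_*\sO_{\dE}.
\]

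To establish these two equivalences I would exploit base change along the derived cartesian squares relating the derived blow-up square \eqref{diag:derived-blow-up} to the regular blow-up square \eqref{diag:regular-blow-up}, namely
\[
\xymatrix{
\tilde\dX \ar[r]^{\tilde f} \ar[d]_{p} & \tilde X' \ar[d]^{p'} \\
X \ar[r]^{f} & X'
} \qquad \text{and} \qquad
\xymatrix{
\dE \ar[r]^{f_{\dE}} \ar[d]_{q} & E' \ar[d]^{q'} \\
\dY \ar[r]^{f_{\dY}} & Y'.
}
\]
Both squares are derived cartesian by construction (and by the lemmas of Subsection~\ref{subsec:derived-blow-up-squares}). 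Applying the base change formula $f^* p'_* \simeq p_* \tilde f^*$ (available in derived algebraic geometry, see \cite[Cor.~3.4.2.2]{sag}) to $\sO_{\tilde X'}$ yields
\[
p_*\sO_{\tilde\dX} \simeq p_*\tilde f^* \sO_{\tilde X'} \simeq f^* p'_*\sO_{\tilde X'}.
\]
The classical computation for a blow-up in a regularly immersed center gives $p'_*\sO_{\tilde X'} \simeq \sO_{X'}$, so the right-hand side is $f^*\sO_{X'} = \sO_X$. The same argument applied to $q'_*\sO_{E'}\simeq \sO_{Y'}$ (an exceptional divisor of a regular blow-up is a projective bundle over the center, hence $q'_*\sO_{E'(n)}\simeq\sO_{Y'}$ in particular for $n=1$) gives the second equivalence.

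The main conceptual point, and the only place where derivedness matters, is the application of base change: the projection formula and the base change isomorphism are valid in the derived setting precisely because the defining squares are \emph{derived} cartesian, not merely cartesian. Once these are in place, the argument is purely formal. I would therefore expect the write-up to be short, with the only subtlety being a careful invocation of base change and the projection formula in the $\infty$-categorical framework.
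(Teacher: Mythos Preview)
Your proposal is correct and follows essentially the same route as the paper: reduce full faithfulness to the unit being an equivalence, then transport the question along the derived cartesian squares to the regular situation over $X'$ via base change, where it is Thomason's classical result \cite[Lemme~2.3]{Thomason-blowup}. The only cosmetic difference is that the paper avoids the projection-formula step by checking the unit after applying $f_*$ (which is conservative since $f$ is affine) and then using base change in the form $\tilde f_* p^* \simeq p'^* f_*$ to rewrite $f_* p_* p^* \simeq p'_* p'^{*} f_*$, landing directly on the unit for $p'^*$; your version instead reduces to the structure sheaf first and then applies base change in the form $f^* p'_* \simeq p_* \tilde f^*$.
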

\begin{proof}
For the first functor, we may equivalently show that the unit $1 \to p_{*}p^{*}$ is an equivalence. Since $f\colon X \to X'$ is affine, it is enough to prove that $f_{*} \to f_{*}p_{*}p^{*}$ is an equivalence.
Using again base change for the derived cartesian square
\[
\xymatrix{
\tilde \dX \ar[r]^{\tilde f} \ar[d]_-{p} & \tilde X' \ar[d]^-{p'} \\
X \ar[r]^{f} & X'
}
\]
we have $f_{*}p_{*}p^{*} \simeq p'_{*}\tilde f_{*}p^{*} \simeq p'_{*}p^{\prime *}f_{*}$ and the claim follows because $1 \to p'_{*}p^{\prime *}$ is an equivalence \cite[Lemme 2.3]{Thomason-blowup}.

The same argument works for $q^{*}$.
\end{proof}

We have the fundamental fibre sequence
\(
\sO_{\tilde X'}(1) \to \sO_{\tilde X'} \to j'_{*}(\sO_{E'})
\)
in $\Perf(\tilde X')$. Pullback via $\tilde f$ and base change $\tilde f^{*}j'_{*}\simeq j_{*}f_{E}^{*}$  give the fibre sequence
\(
\sO_{\tilde \dX}(1) \to \sO_{\tilde \dX} \to j_{*}\sO_{\dE}
\)
in $\Perf(\tilde \dX)$. After we apply $j^{*}$ to this fibre sequence the last map has a retraction given by the counit $j^{*}j_{*}\sO_{\dE} \to \sO_{\dE}$. Hence we see that
\begin{equation}\label{eq:jupperstarjlowerstar}
 j^{*}j_{*}\sO_{\dE} \simeq \sO_{\dE} \oplus \sO_{\dE}(1)[1]
\end{equation}
in $\Perf(\dE)$.

\begin{lem}\label{lem:vanishing}
For $1 \leq k \leq r$ we have $\Hom(\sO_{\dE}, \sO_{\dE}(-k)) \simeq 0$.
\end{lem}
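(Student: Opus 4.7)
My plan is to identify the mapping spectrum with $R\Gamma(\dE, \sO_{\dE}(-k))$ and then, via derived base change, reduce the vanishing to the classical cohomology vanishing for line bundles of intermediate negative degree on a projective space.

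Since $\sO_{\dE}$ is the monoidal unit, $\Hom(\sO_{\dE}, \sO_{\dE}(-k)) \simeq R\Gamma(\dE, \sO_{\dE}(-k))$. By construction (Definition~\ref{dfn:derived-pullback}), the square
\[
\xymatrix{
\dE \ar[r]^{f_{\dE}}\ar[d]_{q} & E' \ar[d]^{q'} \\
\dY \ar[r]^{f_{\dY}} & Y'
}
\]
is derived cartesian and $\sO_{\dE}(-k) = f_{\dE}^{*}\sO_{E'}(-k)$. Since $\ba'$ is a regular sequence of length $r+1$, the exceptional divisor is the standard projective bundle $E' \cong \P^{r}_{Y'}$ with $\sO_{E'}(1)$ the tautological line bundle, so in particular $q'$ is projective. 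Derived base change \cite[Cor.~3.4.2.2]{sag} then yields
\[
q_{*}\sO_{\dE}(-k) \simeq f_{\dY}^{*}q'_{*}\sO_{E'}(-k),
\]
reducing the problem to proving $q'_{*}\sO_{E'}(-k) \simeq 0$ for $1 \leq k \leq r$.

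This vanishing is the familiar statement $H^{i}(\P^{r}, \sO(-k)) = 0$ for all $i$ and all $1 \leq k \leq r$, applied fibrewise over the affine base $Y'$. Thus the main (minor) obstacle is to verify that the cited derived base change result indeed applies to the projective morphism $q'$ (which is not an issue since the target is the classical $Y'$ and the square is derived cartesian by construction); the remaining computation on $\P^{r}_{Y'}$ is classical.
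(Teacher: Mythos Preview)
Your proof is correct and follows essentially the same approach as the paper: both identify the mapping spectrum with derived global sections, apply derived base change along the derived cartesian square relating $\dE \to \dY$ and $E' \to Y'$, and reduce to the classical vanishing $q'_{*}\sO_{E'}(-k)\simeq 0$ on $E'\cong\P^{r}_{Y'}$. The only cosmetic difference is that the paper pushes forward one step further along the affine map $f_{\dY*}$ to $Y'$ before concluding, whereas you stop at $\dY$; since $q_{*}\sO_{\dE}(-k)\simeq f_{\dY}^{*}(0)\simeq 0$ already, this extra step is unnecessary and your version is marginally cleaner.
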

\begin{proof}
Let $B'$ respectively $B$ be the (derived) rings defining the affine (derived) schemes $Y'$ respectively $\dY$. 
Note that $B'$ is an ordinary ring, and $E' = \P^{r}_{B'}$ is the ordinary projective space.

The functor $\Hom(\sO_{\dE}, -)\colon \QCoh(\dE) \to \Sp$ is equivalent to the composition
\[
\QCoh(\dE) \xrightarrow{q_{*}} \QCoh(\dY) \xrightarrow{f_{\dY*}} \QCoh(Y')  \xrightarrow{\Hom(\sO_{Y'},-)} \Sp.
\]
We have 
\begin{align*}
f_{\dY*}q_{*}\sO_{\dE}(-k) &\simeq f_{\dY*}q_{*}f_{\dE}^{*}\sO_{E'}(-k) \\
&\simeq f_{\dY*}f_{\dY}^{*}q'_{*}\sO_{E'}(-k)
\end{align*}
using base change again. Finally, $q'_{*}\sO_{E'}(-k) \simeq 0$ by the classical computation of the cohomology of projective space \cite[Cor.~III.2.1.13]{EGA}.
\end{proof}

\begin{lem}\label{lemma:fullyfaithfulnessofjlowerstarpupperstar}
The functor $j_{*}q^{*}\colon \Perf(\dY) \to \Perf(\tilde \dX)$ is fully faithful.
\end{lem}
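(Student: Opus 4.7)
The plan is to reduce the claim, via a standard generation argument, to the single computation of $\Hom_{\tilde\dX}(j_*\sO_\dE, j_*\sO_\dE)$, and then to evaluate this using the splitting $j^*j_*\sO_\dE \simeq \sO_\dE \oplus \sO_\dE(1)[1]$ from equation~(\ref{eq:jupperstarjlowerstar}), the vanishing from Lemma~\ref{lem:vanishing}, and the fully faithfulness of $q^*$ from Lemma~\ref{lem:fully-faithful}.

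First I would set up the natural bifunctorial comparison map $\Hom_\dY(F,G) \to \Hom_{\tilde\dX}(j_*q^*F, j_*q^*G)$ induced by the functor $j_*q^*$ and observe that both sides are exact in each of the variables $F, G$ separately. Since $\dY$ is affine (being the Koszul derived scheme of a sequence in an affine ring), the object $\sO_\dY$ generates $\Perf(\dY)$. Hence checking that the comparison is an equivalence reduces, by the standard propagation argument along shifts, cofibres, and retracts in each variable, to checking it for $F = G = \sO_\dY$; that is, to showing the map $\Hom_\dY(\sO_\dY, \sO_\dY) \to \Hom_{\tilde\dX}(j_*\sO_\dE, j_*\sO_\dE)$ is an equivalence.

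Next I would compute the right-hand side. Applying the $(j^*, j_*)$-adjunction and then equation~(\ref{eq:jupperstarjlowerstar}) yields
\[
\Hom_{\tilde\dX}(j_*\sO_\dE, j_*\sO_\dE) \simeq \Hom_\dE(j^*j_*\sO_\dE, \sO_\dE) \simeq \Hom_\dE(\sO_\dE, \sO_\dE) \oplus \Hom_\dE(\sO_\dE(1)[1], \sO_\dE).
\]
The second summand is a shift of $\Hom_\dE(\sO_\dE, \sO_\dE(-1))$, using that $\sO_\dE(1)$ is $\otimes$-invertible, so it vanishes by Lemma~\ref{lem:vanishing} (the case $k=1$). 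The first summand is $\Hom_\dE(q^*\sO_\dY, q^*\sO_\dY)$, which equals $\Hom_\dY(\sO_\dY, \sO_\dY)$ by fully faithfulness of $q^*$. Tracking through the adjunction units, the composite identification is precisely the comparison map, which is therefore an equivalence.

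The main obstacle is conceptual rather than technical: one must make sure that the bifunctorial exactness argument properly reduces the two-variable statement to the case of a single generator, and that equation~(\ref{eq:jupperstarjlowerstar}), which is recorded only for $\sO_\dE$, suffices via this reduction. Once these points are settled, the crucial input is just the $k=1$ instance of Lemma~\ref{lem:vanishing}, and the remaining calculation is essentially formal.
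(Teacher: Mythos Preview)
Your proof is correct and follows essentially the same approach as the paper: reduce by generation to $F=G=\sO_{\dY}$, then compute $\Hom(j_{*}\sO_{\dE}, j_{*}\sO_{\dE})$ via the $(j^{*},j_{*})$-adjunction, the splitting~\eqref{eq:jupperstarjlowerstar}, Lemma~\ref{lem:vanishing} for $k=1$, and the fully faithfulness of $q^{*}$ from Lemma~\ref{lem:fully-faithful}. The paper's argument is the same, just slightly more terse.
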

\begin{proof}
We have to show that $\Hom(F,G) \to \Hom(j_{*}q^{*}F, j_{*}q^{*}G)$ is an equivalence for all $F,G\in \Perf(\dY)$. Since $\Perf(\dY)$ is generated by $\sO_{\dY}$, it suffices to show this for $F=G=\sO_{\dY}$.
We have $q^{*}\sO_{\dY}\simeq \sO_{\dE}$ and
\begin{align*}
 \Hom(j_{*}q^{*}\sO_{\dY}, j_{*}q^{*}\sO_{\dY}) &\simeq  \Hom(j_{*}\sO_{\dE}, j_{*}\sO_{\dE}) \\
 &\simeq \Hom(j^{*}j_{*}\sO_{\dE}, \sO_{\dE}) \\
 & \simeq \Hom(\sO_{\dE} \oplus \sO_{\dE}(1)[1], \sO_{\dE}) \\
 & \simeq \Hom(\sO_{\dE}, \sO_{\dE}) \oplus \Hom(\sO_{\dE}, \sO_{\dE}(-1)[-1]) \\
 & \simeq \Hom(\sO_{\dY}, \sO_{\dY})
\end{align*}
by \eqref{eq:jupperstarjlowerstar}, the fully faithfulness of $q^{*}$, and since $\Hom(\sO_{\dE}, \sO_{\dE}(-1)[-1]) \simeq 0$ by Lemma \ref{lem:vanishing}.
\end{proof}

For $l=0,\dots, r$, let 
\[
\Perf^{l}(\tilde \dX) \subseteq \Perf(\tilde \dX)
\]
 be the full stable idempotent complete subcategory generated by $\sO_{\tilde \dX}$ and $j_{*}\sO_{\dE} \otimes \sO_{\tilde \dX}(-k)$ for $k=1, \dots, l$.
Let
\[
\Perf^{l}(\dE) \subseteq \Perf(\dE)
\]
be the full stable idempotent complete subcategory generated by $ \sO_{\dE}(-k)$ for $k=0, \dots, l$.

By Lemma~\ref{lem:explicit-generators} we have $\Perf^{r} = \Perf$ in both cases. By Lemma~\ref{lem:fully-faithful} we have
equivalences $p^{*}\colon \Perf(X) \xrightarrow{\simeq} \Perf^{0}(\tilde \dX)$ and $q^{*}\colon \Perf(\dY) \xrightarrow{\simeq} \Perf^{0}(\dE)$.
Moreover, $j^{*}\colon \Perf(\tilde \dX) \to \Perf(\dE)$ respects the filtrations by the $\Perf^{l}$. Indeed, it suffices to check this on generators. We have $j^{*}(j_{*}\sO_{\dE}\otimes \sO_{\tilde \dX}(-k)) \simeq j^{*}j_{*}\sO_{\dE} \otimes \sO_{\dE}(-k) \simeq \sO_{\dE}(-k) \oplus \sO_{\dE}(-k+1)[1]$ by \eqref{eq:jupperstarjlowerstar}.

\begin{lem}\label{lem:technical-lemma-1}
The composite functor 
\[
\Perf(\dY) \xrightarrow{q^{*}(-) \otimes \sO_{\dE}(-l)} \Perf^{l}(\dE) \to \Perf^{l}(\dE)/\Perf^{l-1}(\dE)
\]
is an equivalence.
\end{lem}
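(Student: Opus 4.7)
The plan is to exhibit $\Perf^{l}(\dE)$ as a semi-orthogonal decomposition $\langle \Perf^{l-1}(\dE),\sB\rangle$, where $\sB$ denotes the essential image of the functor $q^{*}(-)\otimes\sO_{\dE}(-l)$, and then to identify $\sB$ with the Verdier quotient $\Perf^{l}(\dE)/\Perf^{l-1}(\dE)$.  Since $q^{*}$ is fully faithful by Lemma~\ref{lem:fully-faithful} and $\sO_{\dE}(-l)$ is $\otimes$-invertible in $\Perf(\dE)$, the functor $\Perf(\dY)\to\sB$ is automatically fully faithful and essentially surjective, so once $\sB$ has been identified with the Verdier quotient we are done.

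The heart of the argument is the semi-orthogonality of $\Perf^{l-1}(\dE)$ and $\sB$ inside $\Perf^{l}(\dE)$, which by the choice of generators of $\Perf^{l-1}(\dE)$ amounts to showing
\[
\Hom_{\Perf(\dE)}(\sO_{\dE}(-k),\ q^{*}F\otimes\sO_{\dE}(-l)) \simeq 0
\]
for every $F\in\Perf(\dY)$ and every $k$ with $0\le k\le l-1$.  First I would twist by $\sO_{\dE}(k)$ and use $\sO_{\dE}\simeq q^{*}\sO_{\dY}$ together with the $(q^{*},q_{*})$-adjunction and the projection formula to rewrite the target as $\Hom(\sO_{\dY}, F\otimes q_{*}\sO_{\dE}(k-l))$.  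Next I would invoke derived base change along the square relating $q\colon\dE\to\dY$ to the model $q'\colon E'=\P^{r}_{B'}\to Y'=\Spec(B')$, exactly as in the proof of Lemma~\ref{lem:vanishing}, reducing further to the classical vanishing $Rq'_{*}\sO_{E'}(k-l)\simeq 0$, which holds because $k-l\in[-r,-1]$.

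Once the orthogonality is established, the standard formalism of Verdier quotients in $\Cat^{\mathrm{ex}}_{\infty}$ yields the full faithfulness of the composite $\sB\hookrightarrow\Perf^{l}(\dE)\to\Perf^{l}(\dE)/\Perf^{l-1}(\dE)$: applying $\Hom(-, q^{*}F\otimes\sO_{\dE}(-l))$ to any cofibre sequence $B'\to B\to A$ with $A\in\Perf^{l-1}(\dE)$ yields a fibre sequence whose first term vanishes by orthogonality, so the transition maps in the colimit that computes $\Hom$'s in the Verdier quotient are equivalences.  Essential surjectivity of the composite is then transparent: the quotient is generated by the images of $\sO_{\dE}(-k)$ for $0\le k\le l$, those with $k<l$ die in the quotient, and $\sO_{\dE}(-l)\simeq q^{*}\sO_{\dY}\otimes\sO_{\dE}(-l)$ visibly lies in $\sB$.

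The only technical point requiring care is the combined use of the projection formula and derived base change along the non-flat morphism $f_{\dY}\colon\dY\to Y'$ whose source is only a derived scheme; both, however, are available in the generality we need from \cite{sag} and have already been exploited in the proof of Lemma~\ref{lem:vanishing}, so I do not expect this to pose a serious obstacle.
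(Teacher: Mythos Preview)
Your proposal is correct and follows essentially the same route as the paper: both arguments reduce to the semi-orthogonality $\Hom(\Perf^{l-1}(\dE),\, q^{*}(\Perf(\dY))\otimes\sO_{\dE}(-l))\simeq 0$, combine it with the full faithfulness of $q^{*}$ from Lemma~\ref{lem:fully-faithful}, and conclude via the standard Verdier-quotient formalism that the composite is an equivalence onto the idempotent-complete quotient. The only difference is packaging: the paper checks orthogonality on generators and cites Lemma~\ref{lem:vanishing} and \cite[Lem.~9.1.5]{Neeman-TriCat} directly, whereas you unpack both of these inline (re-deriving the vanishing via the projection formula and base change, and spelling out the colimit description of $\Hom$'s in the quotient).
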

\begin{proof}
The composition sends the generator $\sO_{\dY}$ to the generator $\sO_{\dE}(-l)$. Since $\Perf(\dY)$ is idempotent complete, it is now enough to show that the composition is fully faithful.

The first functor is fully faithful by Lemma \ref{lem:fully-faithful}. By \cite[Lem.~9.1.5]{Neeman-TriCat} 
it now suffices to show that $\Hom(F, q^{*}G\otimes \sO_{\dE}(-l)) \simeq 0$ for every $G\in \Perf(\dY)$ and $F\in \Perf^{l-1}(\dE)$.
It is again enough to check this on generators. But for $k=0, \dots, l-1$ we have
\[
\Hom(\sO_{\dE}(-k), \sO_{\dE}(-l)) \simeq \Hom(\sO_{\dE}, \sO_{\dE}(k-l)) \simeq 0
\]
by Lemma \ref{lem:vanishing}.
\end{proof}

\begin{lem}\label{lem:technical-lemma-2}
The composition
\[
\Perf(\dY) \xrightarrow{j_{*}q^{*}(-)\otimes \sO_{\tilde \dX}(-l)} \Perf^{l}(\tilde \dX) \to \Perf^{l}(\tilde \dX)/\Perf^{l-1}(\tilde \dX)
\]
is an equivalence.
\end{lem}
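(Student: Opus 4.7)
The plan is to follow the same strategy as in Lemma~\ref{lem:technical-lemma-1}, so I will first reduce to showing full faithfulness and then verify an orthogonality statement on generators.

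First I would observe that the composite functor sends the generator $\sO_{\dY}$ of $\Perf(\dY)$ to $j_{*}\sO_{\dE}\otimes \sO_{\tilde\dX}(-l)$, which by definition is one of the generators of the quotient $\Perf^{l}(\tilde\dX)/\Perf^{l-1}(\tilde\dX)$ (and the other generators lie in $\Perf^{l-1}(\tilde\dX)$, so they become zero in the quotient). Since $\Perf(\dY)$ is idempotent complete, it then suffices by the same reasoning as in Lemma~\ref{lem:technical-lemma-1} to show that the composite is fully faithful. By Lemma~\ref{lemma:fullyfaithfulnessofjlowerstarpupperstar} the functor $j_{*}q^{*}$ is fully faithful, and tensoring with the line bundle $\sO_{\tilde\dX}(-l)$ is an autoequivalence, so the functor into $\Perf^{l}(\tilde\dX)$ is fully faithful. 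Invoking \cite[Lem.~9.1.5]{Neeman-TriCat} again, it remains to prove that
\[
\Hom(F, j_{*}q^{*}G \otimes \sO_{\tilde\dX}(-l)) \simeq 0
\]
for every $G\in \Perf(\dY)$ and every $F$ in the set of generators $\{\sO_{\tilde\dX},\, j_{*}\sO_{\dE}\otimes \sO_{\tilde\dX}(-k) : 1\le k \le l-1\}$ of $\Perf^{l-1}(\tilde\dX)$.

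For both types of generators I would reduce the computation to $\dE$ using the projection formula $j_{*}(-)\otimes \sO_{\tilde\dX}(m) \simeq j_{*}((-)\otimes \sO_{\dE}(m))$ for the closed immersion $j$, and then use the $(j^{*},j_{*})$-adjunction together with the splitting $j^{*}j_{*}\sO_{\dE} \simeq \sO_{\dE} \oplus \sO_{\dE}(1)[1]$ from \eqref{eq:jupperstarjlowerstar}. This reduces everything to showing that
\[
\Hom(\sO_{\dE},\, q^{*}G \otimes \sO_{\dE}(m)) \simeq 0 \quad \text{for } -l \le m \le -1,
\]
so in particular for $1 \le -m \le r$. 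By the $(q^{*},q_{*})$-adjunction and the projection formula this equals $\Hom(\sO_{\dY},\, G\otimes q_{*}\sO_{\dE}(m))$, and I would argue that $q_{*}\sO_{\dE}(m) \simeq 0$ for these values of $m$: by base change along the affine morphism $f_{\dY}\colon \dY \to Y'$ (which is conservative on quasi-coherent modules) one has $f_{\dY*}q_{*}\sO_{\dE}(m) \simeq f_{\dY*}f_{\dY}^{*}q'_{*}\sO_{E'}(m)$, and $q'_{*}\sO_{E'}(m) \simeq 0$ for $1\le -m \le r$ by the classical cohomology computation for $\P^{r}_{B'}$ used already in the proof of Lemma~\ref{lem:vanishing}.

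The main subtlety, and the step I expect to require the most care, is the verification of the vanishing on the generators of the form $j_{*}\sO_{\dE}\otimes \sO_{\tilde\dX}(-k)$ with $1 \le k \le l-1$: here the range of twists produced after applying the splitting \eqref{eq:jupperstarjlowerstar} is $k-l$ and $k-l-1$, so one obtains negative twists up to $-l$, and one has to check that this stays within the range $\{-r,\dots,-1\}$ where $q'_{*}\sO_{E'}(m)$ vanishes. Since $l \le r$, this works out, but it is the place where the bound on $l$ is used essentially.
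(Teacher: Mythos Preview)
Your proposal is correct and follows essentially the same route as the paper's proof: reduce to full faithfulness via Lemma~\ref{lemma:fullyfaithfulnessofjlowerstarpupperstar}, invoke \cite[Lem.~9.1.5]{Neeman-TriCat}, and then verify the orthogonality on generators using the projection formula, the $(j^{*},j_{*})$-adjunction, and the splitting~\eqref{eq:jupperstarjlowerstar}. The only difference is cosmetic: the paper reduces immediately to $G=\sO_{\dY}$ (since $\Perf(\dY)$ is generated by $\sO_{\dY}$) and then cites Lemma~\ref{lem:vanishing} directly, whereas you keep $G$ general and re-derive the vanishing $q_{*}\sO_{\dE}(m)\simeq 0$ from the argument inside that lemma; reducing to $G=\sO_{\dY}$ first would shorten your write-up.
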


\begin{proof}
Similarly as before, but using Lemma \ref{lemma:fullyfaithfulnessofjlowerstarpupperstar}, it suffices to show that 
\[
\Hom(j_{*}\sO_{\dE}\otimes \sO_{\tilde \dX}(-k), j_{*}\sO_{\dE} \otimes \sO_{\tilde \dX}(-l)) \simeq 0
\]
and 
\[
\Hom(\sO_{\tilde \dX}, j_{*}\sO_{\dE} \otimes \sO_{\tilde \dX}(-l)) \simeq 0
\]
for $k = 1, \dots, l-1$.
By the projection formula \cite[Rem.~3.4.2.6]{sag} we have $j_{*}\sO_{\dE} \otimes \sO_{\tilde \dX}(-l) \simeq j_{*}\sO_{\dE}(-l)$ and hence
\begin{align*}
&\Hom(j_{*}\sO_{\dE}\otimes \sO_{\tilde \dX}(-k), j_{*}\sO_{\dE} \otimes \sO_{\tilde \dX}(-l)) \\
&\simeq \Hom(j_{*}\sO_{\dE}\otimes \sO_{\tilde \dX}(-k), j_{*}\sO_{\dE}(-l)) \\
&\simeq \Hom(j^{*}j_{*}\sO_{\dE}\otimes \sO_{\dE}(-k), \sO_{\dE}(-l))  \\
&\simeq \Hom(j^{*}j_{*}\sO_{\dE}, \sO_{\dE}(k-l))  \\
&\simeq \Hom(\sO_{\dE}, \sO_{\dE}(k-l)) \oplus \Hom(\sO_{\dE}, \sO_{\dE}(k-l-1)[-1]) \\
&\simeq 0
\end{align*}
by \eqref{eq:jupperstarjlowerstar} and Lemma~\ref{lem:vanishing}.

The second vanishing is obtained in the same way.
\end{proof}

\begin{lem}\label{lem:subquotients-equivalent}
The functor $j^{*}$ induces equivalences $\Perf^{l}(\tilde \dX)/\Perf^{l-1}(\tilde \dX) \xrightarrow{\simeq} \Perf^{l}(\dE)/\Perf^{l-1}(\dE)$.
\end{lem}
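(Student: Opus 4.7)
The plan is to identify both subquotients with $\Perf(\dY)$ using Lemmas~\ref{lem:technical-lemma-1} and~\ref{lem:technical-lemma-2}, and then to show that under these identifications $j^*$ becomes the identity. I will argue separately for each $l\in\{1,\dots,r\}$.

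First, I will compute $j^*$ on a typical generator. For $F\in\Perf(\dY)$, the projection formula \cite[Rem.~3.4.2.6]{sag} yields
\[
j_*q^*F \otimes \sO_{\tilde\dX}(-l) \simeq j_*\bigl(q^*F \otimes \sO_{\dE}(-l)\bigr),
\]
and the $\sO_{\dE}$-linear extension of~\eqref{eq:jupperstarjlowerstar}, namely the formula $j^*j_*G \simeq G \oplus G\otimes\sO_{\dE}(1)[1]$ valid for any $G\in\Perf(\dE)$, then gives
\[
j^*\bigl(j_*q^*F \otimes \sO_{\tilde\dX}(-l)\bigr) \simeq \bigl(q^*F\otimes\sO_{\dE}(-l)\bigr) \oplus \bigl(q^*F\otimes\sO_{\dE}(-(l-1))\bigr)[1].
\]

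Next, I observe that the second summand lies in $\Perf^{l-1}(\dE)$. Indeed, $\sO_{\dE}(-(l-1))$ is among the generators of $\Perf^{l-1}(\dE)$, and the exact functor $q^*(-)\otimes\sO_{\dE}(-(l-1))$ maps $\Perf(\dY) = \langle \sO_{\dY}\rangle$ into $\langle \sO_{\dE}(-(l-1))\rangle \subseteq \Perf^{l-1}(\dE)$. Hence in $\Perf^l(\dE)/\Perf^{l-1}(\dE)$, the image of $j^*(j_*q^*F \otimes \sO_{\tilde\dX}(-l))$ is naturally equivalent to $q^*F \otimes \sO_{\dE}(-l)$. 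Promoting this pointwise computation to a natural equivalence of functors, the composition of $j^*$ with the equivalence $\Phi_l\colon\Perf(\dY)\xrightarrow{\simeq}\Perf^l(\tilde\dX)/\Perf^{l-1}(\tilde\dX)$ of Lemma~\ref{lem:technical-lemma-2} is naturally equivalent to the equivalence $\Psi_l\colon\Perf(\dY)\xrightarrow{\simeq}\Perf^l(\dE)/\Perf^{l-1}(\dE)$ of Lemma~\ref{lem:technical-lemma-1}. Since $\Phi_l$ and $\Psi_l$ are equivalences, so is $j^*$ on the subquotients.

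The main point to justify will be the $\sO_{\dE}$-linear extension of~\eqref{eq:jupperstarjlowerstar} used above. One conceptual route is to observe that $T:=j^*j_*$ is a colimit-preserving $\sO_{\dE}$-linear endofunctor of $\QCoh(\dE)$, hence given by tensoring with a fixed module; evaluation at $G=\sO_{\dE}$ together with~\eqref{eq:jupperstarjlowerstar} identifies this module with $\sO_{\dE}\oplus\sO_{\dE}(1)[1]$, yielding the general formula. A hands-on alternative is to work Zariski-locally on $\tilde\dX$, where $\dE$ is cut out by a single regular element which is a local trivialization of a section of $\sO_{\tilde\dX}(1)$, and to use the two-term Koszul resolution of $j_*\sO_{\dE}$ together with the twist carried by the conormal line bundle $\sO_{\dE}(1)$; passing to Zariski-descent (Theorem~\ref{thm:Zariski-descent}) then globalizes the identity.
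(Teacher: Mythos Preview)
Your proof is correct and follows essentially the same strategy as the paper: identify both subquotients with $\Perf(\dY)$ via Lemmas~\ref{lem:technical-lemma-1} and~\ref{lem:technical-lemma-2}, then verify that $j^{*}$ intertwines these identifications. The one tactical difference is that you invoke a general natural splitting $j^{*}j_{*}G \simeq G \oplus G\otimes\sO_{\dE}(1)[1]$, which you justify by arguing that $j^{*}j_{*}$ is $\sO_{\dE}$-linear (hence given by tensoring with $j^{*}j_{*}\sO_{\dE}$); the paper instead works directly with the natural counit map $j^{*}(j_{*}q^{*}F \otimes \sO_{\tilde\dX}(-l)) \to q^{*}F\otimes\sO_{\dE}(-l)$, checks that its cone lies in $\Perf^{l-1}(\dE)$, and reduces to the generator $F=\sO_{\dY}$ using only the specific instance~\eqref{eq:jupperstarjlowerstar}. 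The paper's route is a bit more economical since naturality comes for free from the counit, whereas your splitting approach requires the extra linearity argument you sketch.
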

\begin{proof}
By Lemmas \ref{lem:technical-lemma-1} and \ref{lem:technical-lemma-2} it 
 is enough to prove that the composite
\begin{multline*}
\Perf(\dY) \xrightarrow{j_{*}q^{*}(-)\otimes \sO_{\tilde \dX}(-l)} \Perf^{l}(\tilde \dX) \to \Perf^{l}(\tilde \dX)/\Perf^{l-1}(\tilde \dX) \xrightarrow{j^{*}}\\\to \Perf^{l}(\dE)/\Perf^{l-1}(\dE)
\end{multline*}
is equivalent to
\[
\Perf(\dY) \xrightarrow{q^{*} \otimes \sO_{\dE}(-l)} \Perf^{l}(\dE) \to \Perf^{l}(\dE)/\Perf^{l-1}(\dE),
\]
or more precisely that for $F\in \Perf(\dY)$ the natural map
\[
j^{*}(j_{*}q^{*}F \otimes \sO_{\tilde \dX}(-l)) \to q^{*}F \otimes \sO_{\dE}(-l)
\]
has its cone in $\Perf^{l-1}(\dE)$. It suffices to check this for the generator $F=\sO_{Y}$.
But since $j^{*}j_{*}\sO_{E} \to \sO_{E}$  has cone $\sO_{E}(1)[1]$,
the map in question has cone $\sO_{E}(-l+1)[1]$ and we are done.
\end{proof}
 
We are finally in the position to {\it prove Theorem~\ref{thm:descent-for-derived-blow-up}}.
Consider the diagram
\[
 \xymatrix@C=4ex{
 \Perf(X) \ar[d]^{i^{*}} \ar[r]^-{p^{*}}_-{\simeq} & \Perf^{0}(\tilde \dX) \ar[d]^-{j^{*}}   \ar@{}[r]|\subseteq  &\Perf^{1}(\tilde \dX) \ar[d]^-{j^{*}}  \ar@{}[r]|\subseteq  & \cdots \ar@{}[r]|\subseteq       &\Perf^{r}(\tilde \dX) \ar[d]^-{j^{*}} \ar@{}[r]|= & \Perf(\tilde \dX)      \\
 \Perf(\dY) \ar[r]^-{q^{*}}_-{\simeq} & \Perf^{0}(\dE) \ar@{}[r]|\subseteq  & \Perf^{1}(\dE) \ar@{}[r]|\subseteq  & \cdots \ar@{}[r]|\subseteq  & \Perf^{r}(\dE)\ar@{}[r]|=& \Perf(\dE).
 }
 \]
By Lemma~\ref{lem:subquotients-equivalent} the functor $j^{*}$ induces equivalences on the successive subquotients. 
By Theorem~\ref{thm:K-is-localizing} $K$-theory sends each square in the above diagram to a homotopy cartesian square of spectra.
Putting these together we get the desired result. \qed

\subsection{Projective bundle formula and Bass fundamental theorem}

For a derived scheme $\dX$ we define the relative projective space over $\dX$ as the pullback $\P^{r}_{\dX} = \dX \times^{\dho}_{\Spec(\Z)} \P^{r}_{\Z}$ in $\dSch$ and  denote the projection $\P^{r}_{\dX}\to \dX$ by $p$. We write $\sO(1)=\sO_{\P^{r}_{\dX}}(1) \in \Perf(\P^{r}_{\dX})$ for the  twisting sheaf given via pullback from $\P^{n}_{\Z}$.
\begin{thm}\label{thm:projective-bundle-formula}
Let $\dX$ be a qcqs derived scheme. Then the functors $p^{*}(-)\otimes\sO(-l)\colon \Perf(\dX) \to \Perf(\P^{r}_{\dX})$ induce an equivalence 
\[
\bigoplus_{l=0}^{r} K(\dX) \xrightarrow{\simeq} K(\P^{r}_{\dX}).
\]
\end{thm}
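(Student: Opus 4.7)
The plan is to adapt the strategy of the proof of Theorem~\ref{thm:descent-for-derived-blow-up}: construct a finite filtration of $\Perf(\P^r_\dX)$ by stable idempotent complete $\infty$-subcategories whose successive subquotients are each equivalent to $\Perf(\dX)$ via $p^{*}(-) \otimes \sO(-l)$, and then appeal to Theorem~\ref{thm:K-is-localizing} and the semi-orthogonality built into the construction to split the resulting tower of cofibre sequences into the desired direct sum decomposition.

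First I would establish the cohomological input. By derived base change along $\dX \to \Spec(\Z)$ applied to the defining cartesian square of $\P^r_\dX$, combined with Serre's classical computation of $R\Gamma(\P^r_{\Z}, \sO(m))$, one obtains $Rp_{*}\sO(m) \simeq \sO_\dX$ for $m=0$ and $Rp_{*}\sO(m) \simeq 0$ for $-r \leq m \leq -1$. The projection formula then yields (a) the fully faithfulness of $p^{*}\colon \Perf(\dX) \to \Perf(\P^r_\dX)$, and (b) the semi-orthogonality $\Hom(p^{*}F \otimes \sO(-k), p^{*}G \otimes \sO(-l)) \simeq 0$ for $F, G \in \Perf(\dX)$ and $0 \leq k < l \leq r$. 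Setting $\Perf^l(\P^r_\dX) \subseteq \Perf(\P^r_\dX)$ to be the smallest stable idempotent complete full subcategory containing $\sO(-k)$ for $k=0,\dots,l$ gives an increasing filtration $\Perf^0 \subseteq \Perf^1 \subseteq \cdots \subseteq \Perf^r$. By an argument parallel to Lemma~\ref{lem:technical-lemma-1}, invoking \cite[Lem.~9.1.5]{Neeman-TriCat}, the functor $p^{*}(-) \otimes \sO(-l)\colon \Perf(\dX) \to \Perf^l(\P^r_\dX)/\Perf^{l-1}(\P^r_\dX)$ is an equivalence for each $l$, since it sends the generator $\sO_\dX$ to the generator $\sO(-l)$ of the quotient and is fully faithful by the $\Hom$-vanishing above.

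The main obstacle is to show that the top of the filtration exhausts all perfect complexes, i.e.~$\Perf^r(\P^r_\dX) = \Perf(\P^r_\dX)$. Using Theorem~\ref{thm:Zariski-descent} and Zariski descent for $\Perf$, this reduces to the affine case $\dX = \Spec(A)$. Here one invokes a derived version of Beilinson's resolution of the diagonal on $\P^r \times \P^r$, obtained by pullback along $\Spec(A) \to \Spec(\Z)$ from the classical resolution over $\Z$; convolving this resolution against any $F \in \Perf(\P^r_A)$ exhibits $F$ as an iterated extension of objects of the form $p^{*}G_l \otimes \sO(-l)$ with $G_l \in \Perf(\dX)$, placing $F$ inside $\Perf^r(\P^r_A)$.

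Once generation is in hand, applying Theorem~\ref{thm:K-is-localizing} to each exact sequence $\Perf^{l-1}(\P^r_\dX) \to \Perf^l(\P^r_\dX) \to \Perf(\dX)$ produces a cofibre sequence of $K$-theory spectra, and the semi-orthogonality provides compatible retractions that split each of them. Assembling the resulting split tower gives the asserted equivalence $\bigoplus_{l=0}^{r} K(\dX) \xrightarrow{\simeq} K(\P^r_\dX)$ induced by the functors $p^{*}(-) \otimes \sO(-l)$.
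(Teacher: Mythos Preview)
Your argument is correct and follows the same overall structure as the paper: build the filtration $\Perf^{0} \subseteq \cdots \subseteq \Perf^{r}$, identify the subquotients with $\Perf(\dX)$ via $p^{*}(-)\otimes\sO(-l)$, and split the resulting $K$-theory cofibre sequences. The cohomological input and the semi-orthogonality argument are exactly as in the paper, which simply points back to Lemmas~\ref{lem:fully-faithful}, \ref{lem:vanishing}, and~\ref{lem:technical-lemma-1} \emph{mutatis mutandis}.

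The one place you diverge is the proof that $\Perf^{r} = \Perf(\P^{r}_{\dX})$. You invoke a derived Beilinson resolution of the diagonal, which certainly works but is heavier than what the paper uses. The paper instead reduces to affine $\dX = \Spec(A)$ at the outset via Zariski descent for $K$-theory (Theorem~\ref{thm:Zariski-descent}), and then observes that $\P^{r}_{A} \to \P^{r}_{\Z}$ is an affine morphism, so Lemma~\ref{lem:pullbacks-generate} transports the classical generators $\sO_{\P^{r}_{\Z}}(-l)$ of $\Perf(\P^{r}_{\Z})$ to generators of $\Perf(\P^{r}_{A})$---exactly parallel to Lemma~\ref{lem:explicit-generators}(ii). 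This avoids both the Beilinson resolution and any appeal to Zariski descent for $\Perf$ itself (which the paper does not state). Your route is self-contained but reproves something the paper already has machinery for; the paper's route reuses Lemma~\ref{lem:pullbacks-generate} and is shorter.
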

\begin{proof}
By Zariski descent (Theorem~\ref{thm:Zariski-descent}) we may assume that $\dX$ is affine. Essentially as in Lemma~\ref{lem:explicit-generators}(ii) we then get that $\Perf(\P^{r}_{\dX})$ is generated by $\sO_{\P^{r}_{\dX}}(-l)$ for $l=0, \dots, r$. We define a filtration of $\Perf(\P^{r}_{\dX})$  as above: $\Perf^{l}(\P^{r}_{\dX})$ is generated by $\sO_{\P^{r}_{\dX}}(-k), k=0, \dots, l$. Lemmas \ref{lem:fully-faithful}, \ref{lem:vanishing}, and \ref{lem:technical-lemma-1} show mutatis mutandis   that $p^{*}(-)\otimes\sO(-l)$ induces an equivalence $\Perf(\dX) \to  \Perf^{l}(\P^{r}_{\dX})/\Perf^{l-1}(\P^{r}_{\dX})$.
It follows that the fibre sequence of $K$-theory spectra associated to the exact sequence
\[
\Perf^{l-1}(\P^{r}_{\dX}) \to \Perf^{l}(\P^{r}_{\dX}) \to \Perf^{l}(\P^{r}_{\dX})/\Perf^{l-1}(\P^{r}_{\dX})
\]
splits. Inductively, this implies the theorem.
\end{proof}

For a derived scheme $\dX$ we write $\dX[t]= \dX \times_{\Spec(\Z)}^{\dho} \Spec(\Z[t])$, etc.
\begin{thm}\label{thm:Bass-fundamental}
Let $\dX$ be a qcqs derived scheme. For every integer $n$ there is an exact sequence
\[
0 \to K_{n}(\dX) \to K_{n}(\dX[t]) \oplus K_{n}(\dX[t^{-1}]) \to K_{n}(\dX[t,t^{-1}]) \to K_{n-1}(\dX) \to 0.
\]
\end{thm}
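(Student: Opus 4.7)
The proof will follow the classical Thomason--Trobaugh pattern (cf.~\cite[6.6]{Thomason-Trobaugh}) adapted to the derived setting, combining the projective bundle theorem (Theorem~\ref{thm:projective-bundle-formula}) and Zariski descent (Theorem~\ref{thm:Zariski-descent}) just proved. No essentially new input is needed.

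The first step is to cover the derived projective line $\P^{1}_{\dX}$ by the two standard affine opens $\dX[t]$ and $\dX[t^{-1}]$, whose intersection is $\dX[t,t^{-1}]$. Zariski descent for this two-element cover produces a Mayer--Vietoris fibre sequence of non-connective $K$-theory spectra
\[
K(\P^{1}_{\dX}) \to K(\dX[t]) \oplus K(\dX[t^{-1}]) \to K(\dX[t,t^{-1}]),
\]
and the projective bundle theorem for $r=1$ identifies $K(\P^{1}_{\dX}) \simeq K(\dX)\oplus K(\dX)$ via the two functors $p^{*}$ and $p^{*}(-)\otimes\sO(-1)$, where $p\colon \P^{1}_{\dX} \to \dX$ is the projection.

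The second step is to unwind the first map of this fibre sequence through the projective bundle identification. Since $\sO(-1)$ is generated by the coordinate $T_i$ on the standard chart $U_i$, its restriction to each of $\dX[t]$ and $\dX[t^{-1}]$ is equivalent to the structure sheaf, hence agrees with $\sO$ in $K$-theory. Consequently, on $K_{n}$, the map takes the matrix form
\[
\begin{pmatrix} p_0^{*} & p_0^{*} \\ p_\infty^{*} & p_\infty^{*} \end{pmatrix}\colon K_{n}(\dX)^{\oplus 2} \longrightarrow K_{n}(\dX[t]) \oplus K_{n}(\dX[t^{-1}]),
\]
where $p_0$ and $p_\infty$ denote the structure maps of the two affine lines. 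The zero sections $i_0\colon \dX \to \dX[t]$ with $t\mapsto 0$ and $i_\infty\colon \dX \to \dX[t^{-1}]$ with $t^{-1}\mapsto 0$ retract $p_0^{*}$ and $p_\infty^{*}$ respectively, so these pullbacks are split injective. A direct calculation then shows that the kernel of the displayed matrix is the anti-diagonal $\{(\alpha,-\alpha)\}\cong K_{n}(\dX)$ and its image is the diagonal $\{(p_0^{*}\alpha, p_\infty^{*}\alpha)\}\cong K_{n}(\dX)$.

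The third and final step is to extract the Bass four-term exact sequence from the long exact sequence of homotopy groups of the Mayer--Vietoris fibre sequence. Exactness at $K_{n-1}(\P^{1}_{\dX})=K_{n-1}(\dX)^{\oplus 2}$ and the identification of the kernel from step two force the boundary map $K_{n}(\dX[t,t^{-1}]) \to K_{n-1}(\dX)^{\oplus 2}$ to land in the anti-diagonal copy of $K_{n-1}(\dX)$. The six-term piece of the long exact sequence thereby collapses to the desired four-term short exact sequence, with first map the diagonal $(p_0^{*},p_\infty^{*})$ and middle map the difference of restrictions to $\dX[t,t^{-1}]$. The main technical point is the explicit identification of the Mayer--Vietoris map under the projective bundle decomposition; once that is in place the argument is a routine algebraic extraction.
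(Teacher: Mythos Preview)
Your proposal is correct and follows exactly the approach indicated in the paper, which simply states that ``the usual proof using Zariski descent (Theorem~\ref{thm:Zariski-descent}) and the projective bundle formula for $\P^{1}_{\dX}$ show this.'' You have merely spelled out the details of that classical Thomason--Trobaugh argument.
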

\begin{proof}
The usual proof using Zariski descent (Theorem~\ref{thm:Zariski-descent}) and the projective bundle formula for $\P^{1}_{\dX}$ show this.
\end{proof}

\section{Pro-excision for simplicial rings}

\label{sec:ProExcision}

\subsection{Pro-systems}\label{ex.subsec1}
In this subsection we summarize some properties and notations of pro-systems.
Given a category $\mathbf C$ one constructs the corresponding category of pro-systems
in $\mathbf C$~\cite[Sec.~2]{Isak01}. Any small cofiltered category $I$ together with a functor $X\colon I\to
\bf C$, written $n\mapsto X_n$, gives rise to a pro-system in $\mathbf{C}$, which, following
Deligne, we denote by $\prolim{n} X_n$. All pro-systems we work with in this paper are indexed by a
countable category~$I$. 
If the pro-systems $X = \prolim{n\in I} X_{n}$ and $Y = \prolim{n\in I} Y_{n}$ are indexed by the same category $I$, then by a level map $X \to Y$ we mean  a natural transformation between these $I$-shaped diagrams.

Let $f\colon X\to Y$ be a morphism of pro-simplicial sets, say
\[
X=\prolim{n\in I} X_n \quad \text{and}\quad
Y= \prolim{n\in I} Y_n.
\]
The homotopy pro-groups and integral homology pro-groups  of $X$ are defined as 
\[
\pi_i(X,x_0)= \prolim{n} \pi_i( X_n,(x_0)_n)  \quad \text{and}\quad H_i(X)= \prolim{n} H_i(X_n ,\Z).
\]
Here $x_0: \{ *\} \to X$ is a morphism from the constant pro-system of a one point set to
$X$. In the following we usually omit the base point in the notation of homotopy groups.

 There is a general definition for $f$
to be a weak equivalence (we shall simply speak of equivalence), see
\cite[Def.~6.1]{Isak01}.   We need this notion only in two special cases.

\begin{itemize}
\item[(i)] Assume that for all $n\in I$ the spaces $X_n$ and $Y_n$ are pointed and connected
  and all maps preserve the base points. Then $f$ is an equivalence if and only if for all
  $i>0$ the map
\begin{equation}\label{ex.eq.pro1}
f_* \colon \pi_i (X) \to \pi_i (Y)
\end{equation}
is an isomorphism of pro-groups, see \cite[Cor.~7.5]{Isak01}.
\item[(ii)] Assume that $f$ is induced by a morphism of pro-systems of group-like simplicial monoids.  Then $f$ is an equivalence if and only if for all
  $i\ge 0$ the map~\eqref{ex.eq.pro1} is an isomorphism of pro-groups.
\end{itemize}

In particular an equivalence of pro-simplicial rings is a
morphism $f\colon A\to B$ inducing isomorphisms of pro-groups $\pi_i(A)\to \pi_i(B)$ for all
$i\ge 0$. Here $A$ and $B$ are pointed by $0$.

\begin{lem}\label{ex.lem.basechiso}
Let $A=(A_n)$, $B=(B_n)$ and $C=(C_n)$ be pro-systems of simplicial rings.
Given level morphisms $A\to B$ and  $A\to C$ such that $A\to B$ is an equivalence of
pro-simplicial rings, then the base change $f_C\colon C\to B \otimes^L_A C$ formed levelwise is an
equivalence of pro-simplicial rings.
\end{lem}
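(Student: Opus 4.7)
The plan is to pass to the cofibre of $A\to B$ in the stable $\infty$-category of $A$-module spectra and apply a K\"unneth/Tor spectral sequence argument. Let $F_n=\mathrm{cofib}(A_n\to B_n)$ be the cofibre in $A_n$-module spectra. Since $\pi_i A\to \pi_i B$ is a pro-isomorphism in every degree, the long exact sequence of pro-homotopy groups forces each pro-group $\prolim{n}\pi_i F_n$ to be pro-zero. Tensoring the cofibre sequence levelwise with $C_n$ over $A_n$ yields a cofibre sequence
\[ C_n \longrightarrow B_n\otimes^L_{A_n} C_n \longrightarrow F_n\otimes^L_{A_n} C_n, \]
so it suffices to prove that $\prolim{n}\pi_i(F_n\otimes^L_{A_n}C_n)$ is pro-zero for every $i\ge 0$.

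For this I would invoke, for each $n$, the convergent first-quadrant K\"unneth spectral sequence
\[ E^2_{p,q}(n)=\Tor^{\pi_*A_n}_p(\pi_*F_n,\pi_*C_n)_q \Rightarrow \pi_{p+q}(F_n\otimes^L_{A_n}C_n), \]
which is natural in the data $(A_n,F_n,C_n)$. The crucial functoriality statement to establish is: if a map $M'\to M$ of modules covering a map $R'\to R$ of simplicial rings is zero, then the induced map $\Tor^{R'}_p(M',N')\to \Tor^R_p(M,N)$ is zero for any compatible $N'\to N$. One sees this by lifting the null map $M'\to M$ to a chain-nullhomotopic map of projective resolutions over $R'$ and $R$ and tensoring with $N'\to N$.

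Finally, fix $i\ge 0$ and a level $n$. Only the finitely many homotopy groups $\pi_j F_n$ with $0\le j\le i$ enter the bidegrees $E^2_{p,q}(n)$ with $p+q\le i$. Using the pro-triviality of $\prolim{n}\pi_j F_n$, choose $n'\ge n$ with $\pi_j F_{n'}\to \pi_j F_n$ zero for each such $j$; the key functoriality then gives that $E^2_{p,q}(n')\to E^2_{p,q}(n)$ is zero for $p+q\le i$, hence, by naturality of the differentials, also on every later page and on $E^\infty$. Since each $\pi_i(F_n\otimes^L_{A_n}C_n)$ carries a finite filtration of length at most $i+1$ with $E^\infty$-associated graded, iterating the transition map at most $i+1$ times exhibits the pro-system $\prolim{n}\pi_i(F_n\otimes^L_{A_n}C_n)$ as pro-zero, completing the proof. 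The main obstacle will be the combined bookkeeping of the finite filtration and the varying base rings $\pi_*A_n$; an alternative that sidesteps the graded Tor is to identify $F_n\otimes^L_{A_n}C_n$ with the geometric realization of the two-sided bar construction $B(F_n,A_n,C_n)$ and run the same pro-vanishing argument degree by degree on simplicial levels.
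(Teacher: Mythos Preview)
Your approach is essentially the paper's: both arguments run Quillen's K\"unneth/Tor spectral sequence $\Tor^{\pi_*A_n}_p(\,-\,,\pi_*C_n)_q \Rightarrow \pi_{p+q}(\,-\otimes^L_{A_n}C_n)$. The paper applies it directly to the map $\pi_*A_n\to\pi_*B_n$ and asserts that the induced map on $E^2$-terms is a pro-isomorphism; you instead pass to the cofibre $F_n$ and argue the analogous pro-vanishing, which is equivalent and in fact spells out the step the paper leaves implicit.

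There is one small slip. Your ``key functoriality'' is stated for a \emph{globally} zero module map $M'\to M$, but in the application you only arrange $\pi_jF_{n'}\to\pi_jF_n$ to vanish for $0\le j\le i$; the graded map $\pi_*F_{n'}\to\pi_*F_n$ need not be zero in higher degrees, so the lemma as stated does not literally apply. The conclusion is nevertheless correct, and the reason is the one your alternative already hints at: since $\pi_*A_n$, $\pi_*F_n$, $\pi_*C_n$ are concentrated in non-negative degrees, the bar complex computing $\Tor^{\pi_*A_n}_p(\pi_*F_n,\pi_*C_n)_q$ in internal degree $q$ involves only $(\pi_*F_n)_j$ with $j\le q$. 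Hence vanishing of the transition map in degrees $\le i$ forces vanishing of the induced map on $E^2_{p,q}$ for all $q\le i$ (in particular for $p+q\le i$). With this refinement in place your filtration/iteration argument goes through.
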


\begin{proof}
By \cite[Thm.~II.6.6]{Quillen67} there is a convergent spectral sequence
\[
\prolim{n} \Tor^{\pi_* A_n}( \pi_* B_n , \pi_* C_n ) \Rightarrow \prolim{n} \pi_* ( B_n \otimes^L_{A_n} C_n).
\]
Our assumptions imply that 
\[
\prolim{n} \pi_* C_n = \prolim{n} \Tor^{\pi_* A_n}( \pi_* A_n , \pi_* C_n )  \xrightarrow{\cong} \prolim{n} \Tor^{\pi_* A_n}( \pi_* B_n , \pi_* C_n ) 
\]
is an isomorphism of pro-groups in each degree.
\end{proof}

\smallskip

The following lemma is well known, it can be deduced for example from the equivalence of (a) and
(e) in  \cite[Thm.~7.3]{Isak01}.

\begin{lem}\label{ex.prop.homequ}
If $f\colon X\to Y$ is an equivalence of pro-spaces, then the map of pro-systems of integral homology groups
\[
f_* \colon H_i(X) \to H_i(Y)
\]
is an isomorphism.
\end{lem}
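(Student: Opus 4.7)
The plan is to deduce the lemma from Isaksen's characterization of pro-equivalences in \cite[Thm.~7.3]{Isak01}, which lists several equivalent conditions for a morphism of pro-simplicial sets to be a weak equivalence. The hint in the statement identifies the key equivalence as that between conditions (a) and (e), where (e) is formulated in terms of (co)homology pro-groups with arbitrary abelian coefficients. The strategy is therefore: first invoke Isaksen's theorem to obtain a pro-isomorphism on cohomology pro-groups, then translate this into the corresponding statement for integral homology pro-groups.

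First I would apply \cite[Thm.~7.3]{Isak01} to the equivalence $f\colon X \to Y$. This gives that $f$ induces an isomorphism of pro-cohomology groups $H^{i}(Y;A) \xrightarrow{\cong} H^{i}(X;A)$ for every abelian group $A$ (more generally, for every local coefficient system), these being defined levelwise as $\prolim{n} H^{i}(X_{n};A)$.

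Second, I would translate the cohomology isomorphism into an isomorphism of integral homology pro-groups by means of the universal coefficient theorem, which gives a levelwise short exact sequence
\[
0 \to \mathrm{Ext}^{1}(H_{i-1}(X_{n}),A) \to H^{i}(X_{n};A) \to \mathrm{Hom}(H_{i}(X_{n}),A) \to 0
\]
natural in $n$, $A$, and in the morphism $f$. Varying $A$ and using the pro-categorical version of the Yoneda lemma (together with the fact that a morphism of pro-abelian groups is an isomorphism if and only if it induces an isomorphism after applying $\mathrm{Hom}(-,A)$ for a sufficiently large collection of test abelian groups $A$) one concludes that $H_{i}(X) \to H_{i}(Y)$ is an isomorphism of pro-groups for every $i$.

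The main obstacle is the translation step in the pro-category. Exactness and duality behave subtly for pro-abelian groups: pro-limits need not preserve short exact sequences on the nose, and the $\mathrm{Ext}$-term in universal coefficients must be controlled, typically via Mittag-Leffler-type reindexing of the level structure. An alternative route, which avoids the universal coefficient translation, is to feed the Postnikov tower of $f$ into a levelwise Serre spectral sequence and conclude by comparison on $E_{2}$-pages using condition (i) of the preceding discussion; but this requires similar pro-categorical care with convergence and is not obviously shorter.
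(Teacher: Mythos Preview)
The paper does not give a proof; it records that the lemma is well known and can be read off from the equivalence of conditions (a) and (e) in \cite[Thm.~7.3]{Isak01}. Your proposal starts from the same reference, so there is nothing structural to compare; what remains is whether your elaboration of the deduction is correct.

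Two points. First, a variance slip: cohomology is contravariant, so applying it levelwise to a pro-space $\prolim{n} X_{n}$ produces an ind-system, and the relevant object is $\colim_{n} H^{i}(X_{n};A)$, not $\prolim{n} H^{i}(X_{n};A)$. With this correction the identification $\colim_{n}\Hom(H_{i}(X_{n}),A)=\Hom_{\text{pro-Ab}}(H_{i}(X),A)$ is the one you want.

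Second, the gap you flag in the universal-coefficient step is real in your write-up but easy to close by induction on $i$. Taking the filtered colimit over $n$ of the levelwise universal-coefficient sequence gives, for every abelian group $A$, a natural short exact sequence
\[
0 \to \colim_{n}\mathrm{Ext}^{1}(H_{i-1}(X_{n}),A) \to \colim_{n} H^{i}(X_{n};A) \to \Hom_{\text{pro-Ab}}(H_{i}(X),A) \to 0.
\]
For $i=0$ the Ext-term is absent, and for $i=1$ it vanishes since $H_{0}$ of a space is free; in either case pro-Yoneda (the embedding $\text{pro-}\mathcal C\hookrightarrow\mathrm{Fun}(\mathcal C,\mathrm{Set})^{op}$ via $P\mapsto\Hom_{\text{pro}}(P,-)$ is fully faithful) gives $H_{i}(X)\cong H_{i}(Y)$. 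For the inductive step, once $H_{i-1}(X)\to H_{i-1}(Y)$ is a pro-isomorphism, the left-hand terms for $X$ and $Y$ agree because $P\mapsto\colim_{n}\mathrm{Ext}^{1}(P_{n},A)$ is a well-defined functor on pro-Ab; the five-lemma then forces $\Hom_{\text{pro-Ab}}(H_{i}(X),A)\cong\Hom_{\text{pro-Ab}}(H_{i}(Y),A)$ for all $A$, and pro-Yoneda finishes. This is the missing content; ``Mittag--Leffler-type reindexing'' is not what is needed here.
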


A partial converse to Lemma~\ref{ex.prop.homequ} is given by the following pro-version of Whitehead's
theorem. 
\begin{prop}\label{eq.propwhite}
  Let $f\colon X \to Y$ be a morphism of pro-systems of pointed connected nilpotent spaces  
such that
\[
f_* \colon  H_i(X) \to H_i(Y)
\]
is an isomorphism of pro-groups for all $i\ge 0$. Then $f$ is an equivalence of pro-spaces.
\end{prop}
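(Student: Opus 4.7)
The plan is to adapt the classical proof of Dror's \emph{generalized Whitehead theorem} for nilpotent spaces to the pro-setting. The strategy has two main steps: first reduce to pro-simply-connected spaces, and then proceed by induction on Postnikov stages.

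For the reduction, note that each $\pi_1(X_n)$ and $\pi_1(Y_n)$ is nilpotent. The pro-iso on $H_1$ gives an iso of pro-abelianizations of these fundamental groups, and combined with the pro-iso on $H_2$, an inductive argument along the lower central series—essentially the pro-analog of \emph{Stallings' theorem} for nilpotent groups—shows that $\pi_1(f)$ is itself a pro-iso. One then passes to the pro-system of universal covers $\widetilde{X_n}, \widetilde{Y_n}$: the Serre spectral sequences for the fibrations $\widetilde{X_n} \to X_n \to K(\pi_1 X_n, 1)$ and their analogs for $Y_n$, together with the pro-iso on $H_*$ of the bases, yield a pro-iso $H_*(\widetilde X) \to H_*(\widetilde Y)$ by a comparison argument, reducing us to the simply connected case.

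For simply connected pro-systems I would induct on Postnikov stages. The base case uses the pro-Hurewicz theorem, which follows degreewise from the classical one: the composite $\pi_2(X) \cong H_2(X) \to H_2(Y) \cong \pi_2(Y)$ is a pro-iso. Inductively, assume $\pi_i(f)$ is a pro-iso for $i<k$. Constructing the pro-system of $k$-th Postnikov sections level-wise, one obtains principal fibrations $K(\pi_k X, k) \to P_k X \to P_{k-1} X$ and analogous ones for $Y$. Comparing the associated Serre spectral sequences using the inductive hypothesis and the homology hypothesis yields that $\pi_k(f)$ is a pro-iso. Having established pro-iso on all positive homotopy pro-groups, one concludes via the characterization of equivalences of pro-systems of pointed connected spaces recalled in Subsection~\ref{ex.subsec1}.

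The main anticipated obstacle is the functoriality of these constructions in the pro-setting: Postnikov towers, universal covers, and principal refinements are not canonical, and one must check that the transition maps in the pro-system respect them up to appropriate equivalence so that the levelwise constructions actually assemble into pro-systems. A possibly cleaner alternative would be to invoke a pro-version of Bousfield--Kan $\Z$-completion, using that $X \to \Z_\infty X$ is a pro-equivalence in the nilpotent situation and that $\Z_\infty$ turns pro-homology isomorphisms into pro-equivalences, so that $\Z_\infty f$, and hence $f$, would be a pro-equivalence.
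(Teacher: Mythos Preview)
The paper does not actually give a proof of this proposition; it simply records that the result follows from Bousfield--Kan \cite[Cor.~III.5.4(ii) and Prop.~III.6.6]{BousfieldKan}, applied to pro-systems indexed by a countable category (so that one may reindex as a tower). This is precisely the ``cleaner alternative'' you sketch at the end: for nilpotent spaces the map to the $\Z$-completion is an equivalence, and $\Z_\infty$ only sees integral homology, so $\Z_\infty f$ and hence $f$ is a pro-equivalence. In other words, your throwaway alternative \emph{is} the paper's argument, and the countability hypothesis the paper explicitly invokes is exactly what makes the Bousfield--Kan tower machinery directly applicable.

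Your primary route---a hands-on Dror/Stallings argument followed by Postnikov induction---is a reasonable outline, but the functoriality obstacles you raise are genuine and not merely bookkeeping. The most delicate step is the passage to universal covers: a pro-isomorphism on $\pi_1$ is not a level-wise isomorphism, so the comparison of the Serre spectral sequences for $\widetilde{X}_n \to X_n \to K(\pi_1 X_n,1)$ with those for $Y$ involves local coefficient systems that do not match level-by-level; extracting a pro-iso on $H_*(\widetilde X)$ requires a reindexing and convergence argument that is essentially what the Bousfield--Kan tower lemmas encapsulate. The Postnikov step has the same flavour. So while your main approach can in principle be completed, doing so amounts to rebuilding a chunk of \cite[Ch.~III]{BousfieldKan}; the paper sensibly just cites it.
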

Proposition~\ref{eq.propwhite} follows from a combination of  Corollary~III.5.4.(ii)
and Proposition~III.6.6 from~\cite{BousfieldKan} for pro-systems indexed by a countable category. It seems likely that
it also holds for uncountable pro-systems.

\smallskip

\begin{defn}\label{ex.defn.prospec}
We say that a morphism of pro-spectra $f\colon X\to Y$ is an \emph{equivalence} if 
\[
f_* \colon \prolim{n} \pi_i(X_n) \to \prolim{n} \pi_i(Y_n) 
\]
is an isomorphism of pro-groups for all $i\in \Z$. We say that a commutative square of
pro-spectra
\begin{equation}\label{ex.eq.hocopro}
\begin{split}
\xymatrix{
X' \ar[r]^{f'} \ar[d]_{\psi} & Y' \ar[d]^\phi\\
X\ar[r]^f  & Y
}
\end{split}
\end{equation}
is \emph{homotopy cartesian} if $X'$ is equivalent to a levelwise homotopy limit of the other
part of the square.
\end{defn}

\begin{rmk}\label{ex.rmkcohoca}
The conditions formulated in Definition~\ref{ex.defn.prospec} are weaker than the
corresponding conditions with respect to the model structure of~\cite{Isaksen04}. Our ad hoc
definition provides more flexibility in the arguments of this section. Anyway, all the homotopy cartesian
squares of pro-spectra encountered in Section~\ref{sec:Introduction} and in
Section~\ref{sec.proof} are in fact homotopy cartesian in the stronger sense of \cite{Isaksen04}.
This follows at once from
Lemma~\ref{lemma:nilinvarianceinsmalldegrees}. 
\end{rmk}

Using standard arguments from topology (cf.\ \cite[Sec.~II.8]{GJ}) one shows:

\begin{lem}\label{ex.lem.hocopro} 
The following are equivalent for a commutative square of pro-spectra~\eqref{ex.eq.hocopro}. 
\begin{itemize}
\item[(i)] The square ~\eqref{ex.eq.hocopro} is homotopy cartesian.
\item[(ii)] $\hofib (f')\to \hofib(f)$ is an equivalence.
\end{itemize}
\end{lem}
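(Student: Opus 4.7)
Plan. By Definition~\ref{ex.defn.prospec}, condition~(i) is equivalent to the canonical comparison map $\alpha\colon X' \to P$ being an equivalence of pro-spectra, where $P$ denotes the levelwise homotopy pullback with $P_n = Y'_n \times^h_{Y_n} X_n$ at each index $n$. After a functorial levelwise fibrant replacement of $\phi$, I may assume each $\phi_n$ is a fibration, so that each $P_n$ is a strict pullback and the homotopy fiber of the projection $P_n \to Y'_n$ canonically identifies with $\hofib(f_n)$.

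The universal property of the pullback then furnishes a levelwise ladder of fiber sequences with top row $\hofib(f'_n)\to X'_n\xrightarrow{f'_n}Y'_n$, bottom row $\hofib(f_n)\to P_n\to Y'_n$, and vertical maps $\beta_n$, $\alpha_n$, and the identity on $Y'_n$. A direct unraveling of the universal property shows that the left-hand map $\beta_n\colon\hofib(f'_n)\to\hofib(f_n)$ is precisely the map induced by $\psi_n$ on horizontal homotopy fibers, i.e.\ the map appearing in~(ii).

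Applying $\pi_\ast$ levelwise and assembling the pro-systems produces a morphism of long exact sequences of pro-abelian groups, using the standard fact that a levelwise exact sequence of pro-abelian groups is exact in $\pro(\mathrm{Ab})$. Since the last column of the resulting ladder is an identity, the five-lemma in $\pro(\mathrm{Ab})$, applied in each degree, yields that $\alpha_\ast$ is an isomorphism of homotopy pro-groups in every degree if and only if $\beta_\ast$ is. By Definition~\ref{ex.defn.prospec} this is exactly the equivalence of~(i) and~(ii). No essential obstacle arises; the argument is purely formal once one has the comparison ladder of fiber sequences, and the only point to keep in mind is the exactness of levelwise long exact sequences in $\pro(\mathrm{Ab})$.
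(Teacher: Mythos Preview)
Your proof is correct and is precisely the standard argument the paper alludes to: the paper does not give a proof but only the remark ``Using standard arguments from topology (cf.\ [GJ, Sec.~II.8]) one shows'' before stating the lemma. Your write-up unpacks exactly this, comparing the square to the levelwise homotopy pullback, identifying the fibre of the projection with $\hofib(f)$, and running the five-lemma in the abelian category $\pro(\mathrm{Ab})$ on the resulting ladder of long exact sequences; nothing more is needed.
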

Here the homotopy fibre is calculated levelwise.
In particular Lemma~\ref{ex.lem.hocopro} implies that the composition of homotopy
cartesian squares of pro-spectra is homotopy cartesian.

Applying \cite[Thm.~4.1]{Isak02} to homotopy pro-groups  yields:
\begin{lem}\label{ex.lem.sephoiso}
Let $X=(X_{n,m})$ and $Y=(Y_{n,m})$ be pro-systems of spectra in the independent indices $n$
and $m$. Let $ f:X\to Y$ be a level map such that for each $m$ the map 
\[
\prolim{n} X_{n,m} \to \prolim{n} Y_{n,m}
\]
is an equivalence. Then
\[
\prolim{n,m} X_{n,m} \to \prolim{n,m} Y_{n,m}
\]
is an equivalence.
\end{lem}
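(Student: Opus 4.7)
The plan is to reduce the claim about pro-spectra to the analogous statement about pro-groups and then invoke \cite[Thm.~4.1]{Isak02}. By Definition~\ref{ex.defn.prospec}, the map $\prolim{n,m} X_{n,m} \to \prolim{n,m} Y_{n,m}$ is an equivalence of pro-spectra if and only if for every integer $i$ the induced map
\[
\prolim{n,m} \pi_i(X_{n,m}) \to \prolim{n,m} \pi_i(Y_{n,m})
\]
is an isomorphism of pro-abelian groups. It therefore suffices to verify this for each fixed $i$.

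Fix $i \in \mathbb{Z}$ and set $G_{n,m} = \pi_i(X_{n,m})$ and $H_{n,m} = \pi_i(Y_{n,m})$; these are pro-abelian groups in the two independent indices $n$ and $m$. Using Definition~\ref{ex.defn.prospec} in the opposite direction, the hypothesis that $\prolim{n} X_{n,m} \to \prolim{n} Y_{n,m}$ is an equivalence of pro-spectra for each $m$ translates exactly into the statement that for each fixed $m$,
\[
\prolim{n} G_{n,m} \to \prolim{n} H_{n,m}
\]
is an isomorphism of pro-groups.

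Now apply \cite[Thm.~4.1]{Isak02}, whose content is precisely that a level map between double pro-systems of abelian groups which induces an isomorphism on $\prolim{n}$ for each fixed $m$ induces an isomorphism on the combined pro-system $\prolim{n,m}$. This yields the required isomorphism on $\pi_i$, and since $i$ was arbitrary, the level map $X \to Y$ is an equivalence of pro-spectra.

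The only subtlety is a bookkeeping one: making sure the hypothesis of \cite[Thm.~4.1]{Isak02} is set up with the correct indexing of the double pro-system, but this is immediate from the assumption that $n$ and $m$ vary independently. No real obstacle arises, since the lemma is essentially the stable, $\pi_*$-wise shadow of Isaksen's theorem.
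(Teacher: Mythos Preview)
Your proof is correct and follows exactly the same approach as the paper: the paper simply states that the lemma is obtained by ``applying \cite[Thm.~4.1]{Isak02} to homotopy pro-groups,'' and you have spelled out precisely this reduction via Definition~\ref{ex.defn.prospec}.
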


\smallskip

If $A$ is a pro-simplicial ring we denote by $K(A)$ the corresponding pro-system of $K$-theory spectra
and by $K_i(A)$ the pro-system of $i$-th homotopy groups.

\begin{prop}\label{ex.prop.isokth}
If $f\colon A\to B $ is an equivalence of pro-simplicial rings, then 
$f_*\colon K(A)\to K(B)$ is an equivalence of pro-spectra. 
\end{prop}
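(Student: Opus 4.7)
It suffices to prove that $K_i(A)\to K_i(B)$ is an isomorphism of pro-abelian groups for every $i\in\Z$. The strategy is to split by degree: for $i\leq 1$ we reduce the problem to the ring $\pi_0$ via Theorem~\ref{thm:identificationofthenegativekgroups}, while for $i\geq 2$ we work at the level of the plus construction and apply the pro-Whitehead theorem.

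For $i\leq 1$, Theorem~\ref{thm:identificationofthenegativekgroups} provides level-wise isomorphisms $K_i(A_n)\xrightarrow{\cong} K_i(\pi_0 A_n)$, and similarly for $B$. Since $f$ is an equivalence of pro-simplicial rings, the induced morphism $\pi_0 A\to \pi_0 B$ is an isomorphism in the pro-category of rings. The functor $K_i$ on rings extends to a functor on pro-rings, hence carries pro-isomorphisms to pro-isomorphisms, so $K_i(A)\to K_i(B)$ is an isomorphism of pro-groups for $i\leq 1$.

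For $i\geq 2$, Proposition~\ref{prop:plusequalswaldhausen} identifies $K_i(A_n)$ with $\pi_i\BGLh(A_n)^+$, so it is enough to show that $\BGLh(A)^+\to \BGLh(B)^+$ is an equivalence of pro-spaces. Using Lemma~\ref{lemma:segalstheoremappliedtobglhat} we have $\pi_1\BGLh(A_n)=\GL(\pi_0 A_n)$ and $\pi_j\BGLh(A_n)=\rM(\pi_{j-1}A_n)$ for $j>1$. By functoriality of $\GL$ and $\rM$, and by the hypothesis that $\pi_j A\to \pi_j B$ is a pro-isomorphism for all $j\geq 0$, each $\pi_j\BGLh(A)\to\pi_j\BGLh(B)$ is a pro-isomorphism. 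Since the underlying spaces are pointed and connected, criterion (i) of Subsection~\ref{ex.subsec1} gives that $\BGLh(A)\to\BGLh(B)$ is an equivalence of pro-spaces. Lemma~\ref{ex.prop.homequ} then implies it induces an isomorphism on integral homology pro-groups. Since the plus construction is acyclic at each level, the same conclusion holds for $\BGLh(A)^+\to\BGLh(B)^+$.

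Each $\BGLh(A_n)^+$ is the unit component of the infinite loop space $\Omega^\infty K(A_n)$, hence is nilpotent; together with the standing countability assumption on the pro-indexing categories, this puts us in the hypotheses of the pro-Whitehead theorem (Proposition~\ref{eq.propwhite}), which yields that $\BGLh(A)^+\to\BGLh(B)^+$ is a pro-equivalence, and therefore induces pro-isomorphisms on all homotopy groups. This completes the case $i\geq 2$. The only delicate ingredient is the invocation of pro-Whitehead; everything else is formal consequences of functoriality together with the basic properties of pro-systems recalled in Subsection~\ref{ex.subsec1}.
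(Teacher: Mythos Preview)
Your proof is correct and follows essentially the same route as the paper's: reduce to $\pi_0$ for $i\leq 1$ via Theorem~\ref{thm:identificationofthenegativekgroups}, and for higher $i$ pass through $\BGLh\to\BGLh^{+}$, using Lemma~\ref{lemma:segalstheoremappliedtobglhat} to get a pro-equivalence before the plus, Lemma~\ref{ex.prop.homequ} to transfer this to homology, acyclicity of the plus construction, and then the pro-Whitehead theorem (Proposition~\ref{eq.propwhite}) applied to the nilpotent spaces $\BGLh^{+}$. The only point you leave implicit---that an equivalence of pro-simplicial rings yields an isomorphism $\pi_0 A\to\pi_0 B$ in pro-$\Ring$ (not merely in pro-$\mathrm{Ab}$)---is equally elided in the paper, and is straightforward to verify from the usual mono/epi characterization of pro-isomorphisms.
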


\begin{proof}
We check that $f_*$ induces an  isomorphism of pro-homotopy groups $K_i$ for all $i\in \Z$.
For $i\le 1$ this is clear from Theorem~\ref{thm:identificationofthenegativekgroups}. For $i\ge 1$ we can calculate  $K_i$ in
terms of the plus-construction by Proposition~\ref{prop:plusequalswaldhausen}, so we have to show that
\[
\BGLh (A)^+ \to \BGLh (B)^+
\]
is an equivalence of pro-spaces. Here the plus construction is applied
levelwise.  Note that these are pro-systems of connected loop
spaces by Proposition~\ref{prop:plusequalswaldhausen}, so by Proposition~\ref{eq.propwhite} it is sufficient to show that the upper
horizontal map in
\begin{equation}\label{ex.eq.pro2}
\begin{split}
\xymatrix{
H_i(\BGLh (A)^+) \ar[r] & H_i(\BGLh (B)^+) \\
 H_i(\BGLh (A))  \ar[u]^{\wr}  \ar[r]   & H_i(\BGLh (B))  \ar[u]^{\wr}
}
\end{split}
\end{equation}
is an isomorphism of pro-groups for all $i\ge 0$. Note that the vertical maps are
levelwise isomorphisms. As 
\[
\pi_i(\BGLh (A))=   \begin{cases} 
   \GL(\pi_0(A))  & \text{if } i=1 \\
   \rM(\pi_{i-1}(A))       & \text{if } i>1
  \end{cases} 
\]
by Lemma~\ref{lemma:segalstheoremappliedtobglhat}
and correspondingly for $B$, the map
\[
\BGLh (A) \to \BGLh (B)
\]
is an equivalence of pro-spaces. So by Lemma~\ref{ex.prop.homequ} the lower
horizontal map in~\eqref{ex.eq.pro2} is an isomorphism of pro-groups.
\end{proof}

\subsection{Statement of results}

Let $R$ be a commutative ring an let $\mathbf a= (a_1, \ldots , a_r)\in R^r$. We write
$\mathbf a(n)$ for the sequence $(a_1^n, \ldots ,  a_r^n)$. For an $R$-module $M$ we denote
by $\mathbf a(n)M$ the submodule  $a_1^nM + \cdots + a_r^n M$ of $M$ and we denote by
$M/\mathbf a (n)$ the corresponding quotient module.
To simplify the notation for any simplicial ring $A$ over $R$ we write $A(n)$ for
$\Ko(A;\mathbf a(n)) $ (see Definition~\ref{dfn:derived-pullback}).

The following lemma reflects a well-known algebraic observation. It explains
why pro-systems help to bridge the gap between derived geometry and classical
geometry.

\begin{lem}\label{ex.koslem}
If $R$ is noetherian and $M$ is a finitely generated $R$-module, we get the vanishing
\[
\prolim{n} H_i (M; \mathbf a(n))=0
\]
of Koszul homology for all $i> 0$.
\end{lem}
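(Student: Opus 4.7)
The plan is to reformulate the Koszul homology as Tor groups over a polynomial ring, where the pro-vanishing follows from a classical application of the Artin--Rees lemma.

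First, I would introduce the auxiliary polynomial ring $A'=R[T_1,\dots,T_r]$, noetherian by Hilbert's basis theorem, and the $R$-algebra surjection $A'\twoheadrightarrow R$ sending $T_i\mapsto a_i$. Via this surjection $M$ becomes a finitely generated $A'$-module. Because $\mathbf{a}'=(T_1,\dots,T_r)$ is a regular sequence in $A'$, each thickening $\mathbf{a}'(n)=(T_1^n,\dots,T_r^n)$ is regular as well, so the Koszul complex $K(A';\mathbf{a}'(n))$ is a resolution of $A'/(\mathbf{a}'(n))$ by finitely generated free $A'$-modules. The identification of complexes
\[
K(A';\mathbf{a}'(n))\otimes_{A'}M \;\cong\; K(M;\mathbf{a}(n)),
\]
which uses only that $T_i$ acts on $M$ as $a_i$, then yields $H_i(M;\mathbf{a}(n))\cong \Tor^{A'}_i(A'/(\mathbf{a}'(n)),M)$.

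Next, set $I=(T_1,\dots,T_r)\subseteq A'$. The inclusions $I^{rn}\subseteq (\mathbf{a}'(n))\subseteq I^n$ show that the pro-$A'$-modules $\{A'/I^n\}_n$ and $\{A'/(\mathbf{a}'(n))\}_n$ are pro-isomorphic, so it suffices to prove
\[
\prolim{n}\Tor^{A'}_i(A'/I^n,M)=0\quad\text{for } i>0.
\]
For this I would pick a resolution $F_\bullet\to M$ by finitely generated free $A'$-modules and use $\Tor^{A'}_i(A'/I^n,M)\cong H_i(F_\bullet/I^nF_\bullet)$. By the Artin--Rees lemma applied to the submodule $dF_i\subseteq F_{i-1}$, there exists $c=c_i\geq 0$ such that $I^nF_{i-1}\cap dF_i\subseteq I^{n-c}\,dF_i$ for all $n\geq c$. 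A short chase then shows that the transition $H_i(F_\bullet/I^nF_\bullet)\to H_i(F_\bullet/I^{n-c}F_\bullet)$ is the zero map: any $x\in F_i$ with $dx\in I^nF_{i-1}$ satisfies $dx=dz$ for some $z\in I^{n-c}F_i$, hence $x-z\in \ker(d\colon F_i\to F_{i-1})=dF_{i+1}$ by exactness of the resolution, so $x\in dF_{i+1}+I^{n-c}F_i$ and $[x]=0$ in $H_i(F_\bullet/I^{n-c}F_\bullet)$.

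The only non-formal input is the Artin--Rees step; everything else is a reduction. One could alternatively try induction on the length $r$ of $\mathbf{a}$ using the decomposition $K(\mathbf{a}(n))=K(\mathbf{a}'(n))\otimes K(a_r^n)$ and its associated long exact sequence, but closing the $i=1$ step of such an induction seems to again call for Artin--Rees-type control on the torsion in the pro-system $\{M/\mathbf{a}'(n)M\}_n$, so the polynomial-ring reduction is cleaner.
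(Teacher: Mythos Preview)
Your proof is correct, but it takes a genuinely different route from the paper's. The paper argues by induction on the length $r$ of the sequence: using the long exact sequence relating Koszul homology for $\mathbf a'(n)=(a_1^n,\dots,a_{r-1}^n)$ and $\mathbf a(n)$, the inductive step reduces to showing that for each fixed $n$ the pro-system $m\mapsto \ker\bigl(a_r^m\colon M/\mathbf a'(n)\to M/\mathbf a'(n)\bigr)$ is pro-trivial. They dispatch this by the elementary observation that this ascending chain of submodules of a finitely generated module over a noetherian ring stabilizes, say at $m_0$, and that the transition maps are multiplication by $a_r$, hence the composite $K(m+m_0)\to K(m)$ is zero. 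No Artin--Rees is invoked.

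Your argument instead packages everything at once via the polynomial ring $A'=R[T_1,\dots,T_r]$, identifies the Koszul homology with $\Tor^{A'}_i(A'/(\mathbf a'(n)),M)$, cofinally replaces by $\Tor^{A'}_i(A'/I^n,M)$, and then applies Artin--Rees to a chosen free resolution. This is cleaner and avoids the double pro-index $(m,n)$ hidden in the paper's induction, at the cost of quoting Artin--Rees rather than the weaker noetherian stabilization. Amusingly, your closing remark anticipates exactly the paper's approach; contrary to what you say there, the $i=1$ step does close without the full Artin--Rees lemma, via the stabilization argument just described.
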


\begin{proof}
We prove the lemma by induction on the length $r$ of the sequence $\ba = (a_1, \ldots , a_r)$. In case $r=0$ it is clear. Assume it is known for
sequences of length less than $r>0$.
Set $\mathbf a'(n)=(a_1^n, \ldots , a_{r-1}^n)$. Then we get an exact sequence
\[
\cdots \xrightarrow{a_r^n} H_i(M; \mathbf a'(n)) \to H_i(M; \mathbf a(n)) \to  H_{i-1}(M;
\mathbf a'(n)) \xrightarrow{a_r^n} \cdots .
\]
Our induction assumption tells us that
\[
\prolim{n} H_i (M; \mathbf a'(n))=0
\]
for $i>0$, so it is sufficient to show that the pro-group
\[
m\mapsto K(m)= \ker[ H_{0}(M; \mathbf a'(n)) \xrightarrow{a_r^m} H_{0}(M; \mathbf a'(n))  ]
\]
is trivial for every $n>0$. Note that $H_{0}(M; \mathbf a'(n)) =M/\mathbf a'(n)$ is a
finitely generated $R$-module and that the transition map of the pro-system $K(m+1)\to K(m)$ is
multiplication by $a_r$. By the noetherian hypothesis there exists $m_0>0$, depending
on $n$, such that the
natural inclusion $K(m_0)\subseteq K(m_0+1)$ is an isomorphism. Then the transition
map $K(m+m_0) \to K(m)$ vanishes for all $m>0$. 
\end{proof}

\begin{lem}\label{ex.lemmodid} For a noetherian simplicial ring $A$ over $R$
the canonical morphism $A\to A(n)$ induces isomorphisms of pro-groups
\[
\prolim{n} (\pi_i A )/ \mathbf a (n)  \xrightarrow{\cong} \prolim{n} \pi_i A(n)
\]
for all $i\ge 0$.
\end{lem}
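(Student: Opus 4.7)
My plan is to compute $\pi_* A(n)$ via a Koszul-type spectral sequence and then invoke Lemma~\ref{ex.koslem} to make the resulting pro-system collapse onto a single column. First, I will make $A(n)$ concrete: by Definition~\ref{dfn:derived-pullback} and Lemma~\ref{lem:independence} we may take $A(n) \simeq A \otimes^L_{R'} R'/\ba'(n)$ with $R' = \Z[T_1,\dots,T_r]$ and $\ba' = (T_1,\dots,T_r)$ the evident regular sequence mapping to $\ba$. Since $\ba'(n)$ is regular in $R'$, the Koszul complex $K_\bullet(R';\ba'(n))$ is a free $R'$-resolution of $R'/\ba'(n)$, and tensoring over $R'$ with $A$ presents $A(n)$ as the Koszul complex $K_\bullet(A;\ba(n))$ of the sequence $\ba(n)$ on the simplicial ring $A$. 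The filtration of the resulting double complex by Koszul degree produces a first-quadrant spectral sequence, bounded in $p$ by $r$ and hence strongly convergent, and natural in $n$:
\[
E^2_{p,q}(n) = H_p(\pi_q A;\ba(n)) \Longrightarrow \pi_{p+q} A(n),
\]
where $H_p(\pi_q A;\ba(n))$ is the ordinary Koszul homology of the $\pi_0 A$-module $\pi_q A$.

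Since $A$ is noetherian, each $\pi_q A$ is a finitely generated $\pi_0 A$-module, so Lemma~\ref{ex.koslem} gives
\[
\prolim{n} E^2_{p,q}(n) = \prolim{n} H_p(\pi_q A;\ba(n)) = 0 \qquad \text{for all } p > 0,
\]
while the remaining column reads $E^2_{0,q}(n) = (\pi_q A)/\ba(n)$. Because the spectral sequence is concentrated in $0 \le p \le r$ for each $n$, this forces pro-degeneration: every differential into or out of the $p=0$ column has pro-trivial source or target, and in the abutment filtration $0 = F_{-1} \subseteq F_0 \subseteq \dots \subseteq F_r = \pi_i A(n)$ each successive quotient $F_p/F_{p-1} = E^\infty_{p,i-p}$ with $p > 0$ is pro-trivial. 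Exactness of $\prolim{n}$ on the abelian category of pro-abelian groups, combined with a finite induction on $p$, then yields pro-isomorphisms $E^2_{0,i}(n) \xrightarrow{\cong} E^\infty_{0,i}(n) = F_0 \xrightarrow{\cong} \pi_i A(n)$.

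The composite pro-isomorphism $\prolim{n}(\pi_i A)/\ba(n) \xrightarrow{\cong} \prolim{n}\pi_i A(n)$ is the map asserted in the lemma: the inclusion $F_0 A(n) = A \hookrightarrow A(n)$ of the degree-zero piece of the Koszul filtration is the unit $A \to A(n)$, so the associated edge map is induced by $A \to A(n)$, and this map factors through $(\pi_i A)/\ba(n)$ because $\pi_0 A(n) = (\pi_0 A)/\ba(n)$ acts on every $\pi_i A(n)$ and in particular kills $\ba(n)$. The main technical point is the pro-degeneration step, but since only finitely many columns of $E^2$ are non-zero, this reduces to a direct induction using exactness of $\prolim{n}$ on pro-abelian groups.
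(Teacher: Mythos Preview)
Your proof is correct and follows essentially the same approach as the paper: both use the spectral sequence $E^2_{p,q}(n) = H_p(\pi_q A;\ba(n)) \Rightarrow \pi_{p+q} A(n)$ together with Lemma~\ref{ex.koslem} to kill the $p>0$ columns pro-in-$n$. The paper's proof is a single sentence invoking this spectral sequence, while you have additionally spelled out its construction, the pro-degeneration argument, and the identification of the edge map.
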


\begin{proof}
Use the spectral sequence
\[
E^2_{p q}(n) = H_p( \pi_q(A) ;\mathbf a (n)) \Rightarrow  \pi_{p+q} (  A(n)  )
\]
and Lemma~\ref{ex.koslem} with the ring $\pi_0(A)$ and the module $\pi_q(A)$.
\end{proof}

Let $(A_m)_{m}$ and $(B_m)_{m}$ be  pro-systems of commutative
simplicial rings over $R$.

\begin{thm}[Pro-excision for simplicial rings]\label{ex.mainthm}
Consider a morphism of pro-systems of noetherian simplicial rings $\phi\colon (A_m)_{m}\to
(B_m)_{m}$ over $R$. Assume that $\phi$ induces an isomorphism 
\[
\prolim{n,m} \mathbf a (n) \pi_i A_m \xrightarrow{\cong}  \prolim{n,m} \mathbf a (n) \pi_i B_m
\]
of pro-groups  for all $i\ge 0$. Then  
\[
\xymatrix{
\prolim{m} K(A_m) \ar[r] \ar[d] &  \prolim{n,m} K(A_m(n))  \ar[d]\\
\prolim{m} K(B_m) \ar[r] &  \prolim{n,m} K( B_m(n)) 
}
\]
is homotopy cartesian.
\end{thm}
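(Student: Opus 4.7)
The plan is as follows. First, by Lemma~\ref{ex.lem.hocopro}, the square is homotopy cartesian if and only if the induced map on vertical homotopy fibres is a pro-equivalence. Denote these fibres by $F^A$ and $F^B$; the goal is to show $F^A \to F^B$ is a pro-equivalence of spectra. By the Bass fundamental theorem (Theorem~\ref{thm:Bass-fundamental}) combined with Lemma~\ref{ex.lem.hocopro}, this reduces to the connective cover: the non-positive $K$-groups of $A_m$ and $A_m(n)$ agree with those of $\pi_0 A_m$ and $\pi_0 A_m / \mathbf{a}(n)$ respectively, by Theorem~\ref{thm:identificationofthenegativekgroups} together with Lemma~\ref{ex.lemmodid}, and the $K_{\le 0}$-statement then follows by classical Suslin excision for discrete rings, applied to the pro-isomorphism of ``ideals'' provided by the hypothesis.

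Second, I model connective $K$-theory via the plus construction (Proposition~\ref{prop:plusequalswaldhausen}). Let $I_m(n)$ be the levelwise homotopy fibre of the (non-unital) map $A_m \to A_m(n)$. By Lemma~\ref{ex.lemmodid} and the hypothesis,
\[
\prolim{n,m} \pi_i I^A_m(n) \;\cong\; \prolim{n,m} \mathbf{a}(n) \pi_i A_m \;\cong\; \prolim{n,m} \mathbf{a}(n) \pi_i B_m \;\cong\; \prolim{n,m} \pi_i I^B_m(n)
\]
for every $i\geq 0$. The cartesian square~\eqref{eq:waldhausenglhatconstructionforideals} identifies $\GLh(I_m(n))$ with the homotopy fibre of $\GLh(A_m) \to \GLh(A_m(n))$, so after passing to classifying spaces and plus-constructions the task reduces to establishing the pro-equivalence
\[
\prolim{n,m} \rB\GLh(I^A_m(n))^+ \xrightarrow{\;\simeq\;} \prolim{n,m} \rB\GLh(I^B_m(n))^+ ,
\]
from which the desired pro-equivalence on positive $K$-groups will follow.

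Third, both pro-spaces above are levelwise connected, and after the plus-construction they are simple (being infinite loop spaces, by Proposition~\ref{prop:plusequalswaldhausen}), hence in particular nilpotent. So the pro-Whitehead theorem, Proposition~\ref{eq.propwhite}, applies. It therefore suffices to prove that the induced map of integral pro-homology groups $H_* \rB\GLh(I^A_m(n)) \to H_* \rB\GLh(I^B_m(n))$ is a pro-isomorphism, using the acyclicity of $+$. At this stage Lemma~\ref{ex.lem.sephoiso} is used to manage the interchange of the two pro-indices $n$ and $m$.

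The main obstacle is precisely this last homological statement, which is the pro-version of the key input to Suslin--Wodzicki excision: the homology of $\GLh$ of a non-unital simplicial ring must depend, in the pro-limit, only on the underlying pro-non-unital-ring and not on the ambient ring. Classically this is proved via a Volodin-type or bar-complex argument whose $E^2$-page involves $\Tor$-terms against trivial coefficients; in our setting, noetherianness together with the pro-vanishing of Koszul homology (Lemma~\ref{ex.koslem}) should force these $\Tor$-terms to become zero in the pro-limit indexed by $n$, after which the hypothesis on pro-systems of ideals closes the comparison between $A$ and $B$.
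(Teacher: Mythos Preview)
Your reduction in the second and third steps has a genuine gap: the relative $K$-theory $K(A_m,A_m(n))$ in positive degrees is the homotopy fibre of $\BGLh(A_m)^+ \to \BGLh(A_m(n))^+$, and this is \emph{not} in general equivalent to $\BGLh(I_m(n))^+$. The plus construction does not preserve fibre sequences; the fibre of the plus-constructed map differs from $\BGLh(I)$ by exactly the $\pi_1\BGLh(A_m)$-action on the homology of the (unplussed) fibre. Consequently, even if you knew that $H_*\BGLh(I^A_m(n)) \to H_*\BGLh(I^B_m(n))$ is a pro-isomorphism, this would not yet give the desired statement about relative $K$-groups. Your appeal to Proposition~\ref{prop:plusequalswaldhausen} to argue that $\BGLh(I_m(n))^+$ is an infinite loop space (hence nilpotent) is also unjustified: that proposition concerns unital rings, and for a non-unital $I$ there is no reason for $\BGLh(I)^+$ to be an $H$-space.

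The paper confronts precisely this obstruction. It proceeds by induction on the length $r$ of the sequence (your sketch omits this). In the base case $r=1$ it compares the genuine relative-$K$ fibre $\tilde F(n)=\hofib(\BGLh(A)^+\to \BGLh'(A/a^nA)^+)$ with the unplussed fibre $F(n)$ via a Serre spectral sequence argument, and the key input is that $\pi_1\BGLh(A)$ acts \emph{pro-trivially} on $H_i(F(n))$. This pro-triviality is the substantive content: it follows from Proposition~\ref{ex.prop.trivac}, which says that $\GL(\Z)$ acts trivially on the image of $H_i(\GLh(aI))\to H_i(\GLh(I))$, proved by an explicit Suslin-style matrix argument (Lemma~\ref{ex.cll2}). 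Only once this is in hand does the pro-Whitehead theorem finish the base case; the inductive step then pastes together one-variable squares. Your final paragraph gestures at the right circle of ideas, but the specific ``Volodin-type $\Tor$-vanishing'' sketch does not supply what is actually needed, namely the pro-triviality of the $\pi_1$-action.
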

For the notion of homotopy cartesian squares of pro-spectra see Definition~\ref{ex.defn.prospec}.
Actually, we will apply the theorem only for constant pro-systems $(A_m)$ and $(B_m)$,
but the more general formulation is needed to prove the theorem by induction on $r$ in Subsection~\ref{ex.sec.proof}.

Morrow \cite{Morrow14} and  Geisser--Hesselholt \cite{GeisserHesselholt},  \cite[Thm.~3.1]{GH11} have shown the following analog of Theorem~\ref{ex.mainthm} for discrete rings,
based on the work of Suslin and Wodzicki \cite{SuslinWod}, \cite{Suslin}. Our
approach to Corollary~\ref{ex.corex1}   simplifies their proof of this discrete result, since we do not use any Tor-unitality.

\begin{cor}[Pro-excision for rings]\label{ex.corex1}
Let $\phi\colon A\to B$ be a homomorphism of noetherian commutative rings.  Let $I\subseteq A$ be an
ideal which $\phi$ maps isomorphically onto an ideal $J\subseteq B$. Then 
\[
\prolim{n} K(A,I^n) \xrightarrow{\simeq} \prolim{n} K(B,J^n)
\]
is an equivalence.
\end{cor}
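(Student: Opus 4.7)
The plan is to deduce this from Theorem~\ref{ex.mainthm} applied to the constant pro-systems $A_m = A$ and $B_m = B$ of discrete (simplicial) rings, with $\ba = (a_1, \ldots, a_r)$ any finite generating sequence of $I$ (which exists by noetherianity). Then $\phi(\ba)$ generates $J$ in $B$.

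First I would verify the hypothesis of Theorem~\ref{ex.mainthm}. Since $A$ and $B$ are discrete, $\pi_i A_m$ and $\pi_i B_m$ vanish for $i>0$, so only $i=0$ contributes, and the required isomorphism becomes
\[
\prolim{n} \ba(n) A \xrightarrow{\cong} \prolim{n} \ba(n) B.
\]
A standard pigeonhole argument gives $I^{rn} \subseteq \ba(n)A \subseteq I^n$, so the pro-systems $\{\ba(n)A\}_n$ and $\{I^n\}_n$ are pro-isomorphic, and similarly for $B$. The required isomorphism then reduces to $\prolim{n} I^n \xrightarrow{\cong} \prolim{n} J^n$, which holds since $\phi$ restricts to a bijection $I^n \to J^n$ for every $n$: as an abelian group $J^n$ is generated by $n$-fold products of elements of $J$ (using that $J$ is an ideal), and each such product lifts uniquely through $\phi|_I$ to a product in $I^n$.

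Theorem~\ref{ex.mainthm} then produces a homotopy cartesian square of pro-spectra
\[
\xymatrix{
K(A) \ar[r] \ar[d] & \prolim{n} K(A(n)) \ar[d] \\
K(B) \ar[r] & \prolim{n} K(B(n))
}
\]
with $A(n) = \Ko(A; \ba(n))$. To identify the right column with $\prolim{n} K(A/I^n)$ and $\prolim{n} K(B/J^n)$, I would invoke Lemma~\ref{ex.lemmodid}: for the discrete ring $A$, the augmentation $A(n) \to \pi_0 A(n) = A/\ba(n)A$ is an equivalence of pro-simplicial rings. Proposition~\ref{ex.prop.isokth} then upgrades this to an equivalence $\prolim{n} K(A(n)) \xrightarrow{\simeq} \prolim{n} K(A/\ba(n)A)$ of pro-spectra, and the cofinality of $\{\ba(n)A\}_n$ with $\{I^n\}_n$ further gives $\prolim{n} K(A/\ba(n)A) \simeq \prolim{n} K(A/I^n)$. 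The same steps work on the $B$ side.

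Finally, taking horizontal homotopy fibres via Lemma~\ref{ex.lem.hocopro} converts the homotopy cartesian square into the desired pro-equivalence
\[
\prolim{n} K(A, I^n) \xrightarrow{\simeq} \prolim{n} K(B, J^n),
\]
since by definition $K(A, I^n) = \hofib(K(A) \to K(A/I^n))$. The main conceptual content is the first step: recasting the classical excision statement so that the simplicial pro-excision theorem applies. Once that reframing is made, verifying the hypothesis reduces to the elementary fact that $\phi$ induces isomorphisms $I^n \cong J^n$ combined with the Koszul--versus--power cofinality, and the remaining manipulations are routine pro-bookkeeping afforded by Lemma~\ref{ex.lemmodid} and Proposition~\ref{ex.prop.isokth}.
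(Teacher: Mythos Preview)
Your proof is correct and is precisely the deduction the paper intends: the corollary is stated without a separate proof, as an immediate consequence of Theorem~\ref{ex.mainthm} applied to the constant pro-systems $A_m=A$, $B_m=B$ and a generating sequence $\ba$ of $I$, with Lemma~\ref{ex.lemmodid} and Proposition~\ref{ex.prop.isokth} handling the passage from $K(A(n))$ to $K(A/I^n)$. You have spelled out exactly the steps a reader would be expected to supply.
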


Another consequence of Theorem~\ref{ex.mainthm} that we will use in
Section~\ref{sec.proof} is the following:

\begin{cor}\label{ex.corex2}
Assume that $\dX=\Spec(A)$ is a noetherian derived scheme over the ring $R$ such that
the open complement of $\dV_\dX ( \ba )$ is an ordinary scheme. Then
\[
\prolim{n} K( \dX, \dV_\dX( \mathbf a (n) ) ) \to \prolim{n} K( t\dX, t \dV_\dX( \mathbf a (n) ))
\]
is an equivalence.
\end{cor}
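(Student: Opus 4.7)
The plan is to apply Theorem~\ref{ex.mainthm} (pro-excision for simplicial rings) to the canonical truncation map $A \to \pi_0 A$, viewed as a morphism of \emph{constant} pro-systems (in the index $m$) of noetherian simplicial commutative rings over $R$, and then to identify the resulting pro-$K$-theory square with the one from the corollary. Writing $A(n) = \Ko(A;\ba(n))$ and observing that $t\dV_\dX(\ba(n)) = \Spec(\pi_0 A/\ba(n))$, the statement is, via Lemma~\ref{ex.lem.hocopro}, equivalent to the homotopy cartesianness of
\[
\xymatrix{
K(A) \ar[r] \ar[d] & \prolim{n} K(A(n)) \ar[d] \\
K(\pi_0 A) \ar[r] & \prolim{n} K(\pi_0 A/\ba(n)).
}
\]

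The key step is to verify the hypothesis of Theorem~\ref{ex.mainthm}, namely that $\prolim{n} \ba(n)\pi_i A \to \prolim{n} \ba(n)\pi_i(\pi_0 A)$ is an isomorphism of pro-groups for all $i \geq 0$. For $i=0$ this is tautological. For $i>0$ the right-hand side vanishes, so it suffices to show that the pro-group $\prolim{n}\ba(n)\pi_i A$ is trivial, and this is precisely where the assumption on the open complement enters. Since $\dX \setminus \dV_\dX(\ba)$ is an ordinary scheme, the localizations $(\pi_i A)[1/a_j]$ vanish for all $i > 0$ and all $j$. The noetherian hypothesis on $\dX$ implies that $\pi_i A$ is a finitely generated $\pi_0 A$-module, so some power $a_j^{n_j}$ annihilates it; taking $n = \max_j n_j$ gives $\ba(n)\pi_i A = 0$, and hence the pro-group is trivial.

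Applying Theorem~\ref{ex.mainthm} now yields the above square with the bottom-right corner replaced by $\prolim{n} K((\pi_0 A)(n))$, where $(\pi_0 A)(n) = \Ko(\pi_0 A; \ba(n))$. To finish, I would identify $\prolim{n} K((\pi_0 A)(n))$ with $\prolim{n} K(\pi_0 A/\ba(n))$. Applied to the discrete ring $\pi_0 A$, Lemma~\ref{ex.lemmodid} says that the canonical Postnikov map $(\pi_0 A)(n) \to \pi_0 A/\ba(n)$ induces pro-isomorphisms on all $\pi_i$: it is the identity on $\pi_0$ levelwise, while for $i>0$ both sides become pro-zero (the target levelwise, the source by the lemma since $\pi_i(\pi_0 A)=0$). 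Proposition~\ref{ex.prop.isokth} then translates this equivalence of pro-simplicial rings into the required equivalence of pro-$K$-theory spectra, proving the corollary.

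The only substantive step is the verification of the pro-excision hypothesis; this amounts to the translation of the geometric condition that $\dX\setminus\dV_\dX(\ba)$ is classical into the algebraic statement that each higher $\pi_i A$ is annihilated by a power of $\ba$, which is made possible by the noetherian assumption. Everything else is bookkeeping via the previously established equivalences.
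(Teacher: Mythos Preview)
Your proof is correct and follows exactly the same route as the paper: apply Theorem~\ref{ex.mainthm} to the truncation map $A\to\pi_0 A$, verify its hypothesis by showing $\prolim{n}\ba(n)\pi_i A \cong 0$ for $i>0$ from the assumption on the open complement, and then use Lemma~\ref{ex.lemmodid} together with Proposition~\ref{ex.prop.isokth} to identify $\prolim{n}K((\pi_0 A)(n))$ with $\prolim{n}K(\pi_0 A/\ba(n))$. The paper's argument is terser---it simply asserts the pro-vanishing of $\ba(n)\pi_i A$---whereas you spell out the localization-plus-finite-generation reasoning, but the content is identical.
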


Here $\dV_\dX ( \mathbf a (n) )= \dV (\mathbf a (n) ) \times^h_{\Spec R} \Spec A$. In order to prove
Corollary~\ref{ex.corex2} apply Theorem~\ref{ex.mainthm} to the morphism $A\to \pi_0 A$
and observe that 
the assumption implies that $\prolim{n} \ba(n)\pi_{i}(A) \cong 0$ for $i>0$ and that 
from Lemma~\ref{ex.lemmodid} and from Proposition~\ref{ex.prop.isokth} we
get an equivalence
\[
 \prolim{n}  K(   \dV_{t \dX}( \mathbf a (n) ))  \xrightarrow{\simeq}  \prolim{n}  K(  t \dV_\dX( \mathbf a (n) )).
\]

\subsection{Proof of pro-excision} \label{ex.sec.proof}

We now prove  Theorem~\ref{ex.mainthm}.
Using an induction on $r$ we reduce it to the following proposition
from homology theory. Its technical proof is very similar to the proof of Suslin's \cite[Thm.~3.5]{Suslin}. However there are some subtleties as we have to work with group-like simplicial
monoids instead of groups. In order to provide a convincing argument we need to reproduce
parts of Suslin's work here and we need to check that
everything works in our framework. Of course we do not claim much originality.

\begin{prop}\label{ex.prop.trivac}
  Let $I$ be a commutative simplicial ring which is not necessarily unital. Consider an
  element $a\in I_0$ such that the multiplication map
  $I\xrightarrow{ a} I$ is injective in each degree. Then $\GL (\mathbb Z)$ acts trivially on the image of
\[
 H_i(\GLh(aI)) \to H_i(\GLh(I))  ) 
\]
for all $i\ge 0$.
\end{prop}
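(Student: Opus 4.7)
The plan is to adapt Suslin's chain-level proof of the analogous statement for discrete rings in \cite[Thm.~3.5]{Suslin} to the simplicial setting. The overall strategy and the main identities will be Suslin's; the novelty is entirely in handling the simplicial structure of $I$ and the fact that $\GLh(-)$ is only a grouplike simplicial monoid rather than a simplicial group.

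First I would reduce the statement to triviality of the conjugation action of a generating set of $\GL(\Z)$. Using the standard doubling embedding $\GLh_{n}(aI) \hookrightarrow \GLh_{2n}(aI)$, $A \mapsto \mathrm{diag}(A,1)$, conjugation by $g\in\GL_n(\Z)$ is realized on this stabilization by conjugation with $\mathrm{diag}(g,g^{-1})$. Since $\mathrm{diag}(g,g^{-1})$ lies in the elementary subgroup $E_{2n}(\Z)$, it is a product of elementary matrices $e_{ij}(k)$ with integer entries, so it is enough to treat the case of a single elementary matrix.

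The core of the argument is an explicit chain-level construction in the bar complex computing $H_*(\GLh(aI))$: for a cycle $\sigma$ representing a class in $H_i(\GLh(aI))$, one produces a chain in the bar complex of $\GLh(I)$ whose boundary is the difference between the images of $\sigma$ and its $g$-conjugate. The hypothesis that $I \xrightarrow{a} I$ is injective in each simplicial degree plays the role of Suslin's Tor-unitality hypothesis: it gives an isomorphism of simplicial abelian groups $I \xrightarrow{\simeq} aI$, which allows one to \emph{factor out the~$a$} from matrix entries in $aI$ and use the resulting fresh elements of $I$ to write down the required chains inside $\GLh(I)$, following Suslin's recipe.

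The main obstacle I expect will be the simplicial bookkeeping. Since $\GLh(-)$ is only a grouplike simplicial monoid, one must work with the bar construction for simplicial monoids, and Suslin's explicit chain homotopies from the discrete case must be made compatible with the face and degeneracy maps of the simplicial ring $I$. Because the injectivity of multiplication by $a$ holds uniformly in every simplicial degree, one should be able to perform Suslin's constructions in each degree simultaneously; the delicate point will be verifying that the resulting chains satisfy the simplicial identities and hence genuinely assemble into a chain homotopy, so that Suslin's discrete argument lifts without loss to the simplicial context.
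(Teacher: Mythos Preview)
Your reduction to conjugation by a single elementary matrix is valid, but this is not the route the paper takes, and the rest of your plan is too underspecified to constitute a proof. You invoke ``Suslin's recipe'' for producing explicit chain homotopies witnessing triviality of conjugation by $e_{ij}(k)$, but neither \cite{Suslin} nor \cite{SuslinWod} contains such a recipe; Suslin's actual argument is structured quite differently, and your description does not identify its key mechanism.

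The paper follows Suslin's genuine approach. First, \cite[Prop.~1.5]{SuslinWod} reduces the assertion to the vanishing of the relative homology map
\[
H_i(\GLh(aI)\ltimes\rM_{\infty,1}(aI),\,\GLh(aI))\longrightarrow H_i(\GLh(I)\ltimes\rM_{\infty,1}(I),\,\GLh(I)).
\]
Injectivity of multiplication by $a$ enters only to factor this through the self-map $(\mathrm{id},a)$ on the target; this is the sole place your ``factor out the $a$'' intuition is correct, and it is a preliminary step, not the substance of the argument. The real work is showing that $(\mathrm{id},a)$ vanishes on relative homology. One proves, by induction on $m$, that the map $\varphi\colon\rM_{n,1}(I)\xrightarrow{a}\rT_{n,m}(I)\ltimes\rM_{n+m,1}(I)$ is \emph{homologically $m$-constant} in the sense of Definition~\ref{ex.def.homoto}; the inductive step combines conjugation by the column vector $u=(0,\dots,0,a)^t$ with an Eilenberg--Zilber construction (Constructions~\ref{ex.const.conj} and~\ref{const.eilzil}). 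The passage from this to vanishing of relative homology is Lemma~\ref{lem.vanreho}, resting on the identification $H_i(G\ltimes H)\cong H_i(G;C(H))$ for $G$ a grouplike simplicial monoid (Proposition~\ref{ex.prop.hosemdi})---this is precisely where the monoid-versus-group issue you flag is resolved.

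Your concern about simplicial bookkeeping is legitimate, but it is not resolved by hand-checking face and degeneracy identities. Rather, all the chain-level homotopies are arranged to be \emph{functorial in the pair $(I,a)$} and $\rM_{n,n}(I)$-equivariant; this functoriality, explicitly highlighted in Lemma~\ref{ex.cll2}, is what guarantees the constructions assemble correctly over the simplicial structure and allows one to reduce to the discrete case.
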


Here the  homology $H_i(G)$ of a simplicial monoid $G$ is defined as the integral homology
of the classifying space of $G$. For more details about monoid homology see Appendix~\ref{ex.sec.hom}.

\begin{proof}[Proof of Proposition~\ref{ex.prop.trivac}]
Fix $a\in I$ with the property formulated in Proposition~\ref{ex.prop.trivac}.
Arguing as in \cite[Prop.~1.5]{SuslinWod}, we are reduced 
to show that the relative  homology map
\[
H_i(\GLh(aI) \ltimes \rM_{\infty,1}(aI), \GLh(aI)) \to H_i(\GLh(I) \ltimes \rM_{\infty,1}(I), \GLh(I))
\]
vanishes for all $i\ge 0$.
Here $\rM_{\infty,1}$ denotes $(\infty\times 1)$-matrices. 
This map factors through
\begin{equation}\label{ex.eq3}
H_i(\GLh(I) \ltimes \rM_{\infty,1}(I), \GLh(I)) \xrightarrow{({\rm id},a)} H_i(\GLh(I) \ltimes \rM_{\infty,1}(I), \GLh(I)) ,
\end{equation}
induced by the identity on $\GLh(I)$ and by multiplication by $a$ on $\rM_{\infty,1}(I)$.
For simplicity of notation we denote by $a$ a map which is multiplication by $a$ on the
$\rM$-part and the obvious inclusion on the other parts.
We will show that~\eqref{ex.eq3} vanishes.  This follows from:

\begin{claim}\label{ex.cl1}
For any $n,m>0$ and $i\le m$ the map
\begin{multline}\label{ex.eq4}
H_i(\GLh_n(I) \ltimes \rM_{n,1}(I), \GLh_n(I)) \xrightarrow{a}\\\to H_i(\GLh_{n+m}(I) \ltimes \rM_{n+m,1}(I), \GLh_{n+m}(I))
\end{multline}
vanishes.
\end{claim}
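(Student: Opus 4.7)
The plan is to adapt Suslin's argument from \cite[Thm.~3.5]{Suslin} to the simplicial setting. First I would interpret the relative homology via a quotient of bar complexes: a typical relative $i$-cycle is a linear combination of tuples $\sigma=((g_1,v_1),\ldots,(g_i,v_i))$ with $g_j \in \GLh_n(I)$, $v_j\in \rM_{n,1}(I)$, and at least one $v_j$ nonzero. Under the map \eqref{ex.eq4}, $\sigma$ maps to the analogous tuple in $\GLh_{n+m}(I)\ltimes \rM_{n+m,1}(I)$ where the $g_j$ sit in the upper-left $n\times n$ block and the $v_j$ become $av_j\in a\rM_{n,1}(I)\subset a\rM_{n+m,1}(I)$, supported in the top $n$ rows.

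The key idea is that once everything has been stabilized to $n+m$ and every entry of each $v_j$ is divisible by $a$, the extra $m\ge i$ rows provide enough room to construct an explicit null-homotopy at the chain level. The action of $\GLh_{n+m}(I)$ on $\rM_{n+m,1}(I)$ by left multiplication lets one realize each entry $av_j\in a\rM_{n+m,1}(I)$ as $(I-E_j)u_j$, where $u_j$ is a vector supported in one of the unused bottom $m$ rows and $E_j\in \GLh_{n+m}(I)$ is a suitable elementary matrix with entries in $aI$ (possible because $av_j$ has entries in $aI$). Following Suslin's Volodin-style argument one averages such constructions over subsets $S\subset\{1,\ldots,i\}$, choosing distinct rows for distinct indices so the various $E_j$ commute, which requires $i\le m$. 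This produces an explicit $(i+1)$-chain in the relative bar complex of $\GLh_{n+m}(I)\ltimes \rM_{n+m,1}(I)$ modulo $\GLh_{n+m}(I)$ whose boundary equals the image of $\sigma$ under~\eqref{ex.eq4}.

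The main obstacle will be handling the simplicial nature of $\GLh$: Suslin works with discrete rings and groups, whereas here $\GLh_k(I)$ is merely grouplike and its homology is defined via the classifying space, as explained in Appendix~\ref{ex.sec.hom}. Since the construction of the homotopy is natural in $I$ and uses only the multiplicative structure of $\GLh_{n+m}(I)$ together with its module structure on $\rM_{n+m,1}(I)$, one can apply it in each simplicial degree $p$ to the group-level bar complex of $\GLh_{n+m}(I_p)\ltimes \rM_{n+m,1}(I_p)$. These degree-wise null-homotopies assemble into a map of simplicial abelian groups which becomes a chain homotopy on the associated normalized complex, thereby killing the stabilization of $\sigma$ in $H_i$. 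Combined with the identification of monoid homology with the homology of the diagonal of the bar bisimplicial set, this yields the vanishing of~\eqref{ex.eq4} for $i\le m$, which is the content of Claim~\ref{ex.cl1}.
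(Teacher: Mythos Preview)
Your sketch is not the paper's argument and, as written, has a genuine gap. The paper does \emph{not} construct a direct chain-level null-homotopy for the bar complex of $\GLh_{n+m}(I)\ltimes\rM_{n+m,1}(I)$. Instead it factors the map~\eqref{ex.eq4} through an intermediate parabolic-type monoid
\[
\rG_{n,m}(I)=\begin{pmatrix}\GLh_n(I)&\rM_{n,m}(I)\\0&\rT_m(I)\end{pmatrix}=\GLh_n(I)\ltimes\rT_{n,m}(I),
\]
so that it suffices to kill the map into $H_i(\rG_{n,m}(I)\ltimes\rM_{n+m,1}(I),\GLh_n(I))$. Via Proposition~\ref{ex.prop.hosemdi} and Lemma~\ref{lem.vanreho} this reduces to the purely ``fibre'' statement that
\[
\varphi\colon\rM_{n,1}(I)\xrightarrow{a}\rT_{n,m}(I)\ltimes\rM_{n+m,1}(I)
\]
is homologically $m$-constant in the sense of Definition~\ref{ex.def.homoto}, $\rM_{n,n}(I)$-equivariantly. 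That is Lemma~\ref{ex.cll2}, proved by induction on $m$ using the single conjugation identity $u^{-1}\psi(v)u=\psi(v)\cdot\varphi(v)$ for $u=(0,\dots,0,a)^t$, together with the abstract Constructions~\ref{ex.const.conj} and~\ref{const.eilzil}. This is precisely Suslin's argument from \cite[Thm.~3.5]{Suslin}; the ``Volodin-style averaging over subsets'' you describe is a different technique.

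The gap in your proposal is twofold. First, the formula $av_j=(I-E_j)u_j$ with $u_j$ a single-row vector and $E_j$ an elementary matrix in $aI$ does not produce an arbitrary $av_j\in a\rM_{n,1}(I)$: one elementary matrix moves one entry, not $n$ of them, and the entries of $v_j$ lie only in $I$, not $aI$. You would need a product of $n$ commuting elementaries per index $j$, and then the combinatorics of your subset-averaging homotopy is no longer the one you indicate. Second, and more seriously, working degreewise requires that your homotopy be \emph{functorial} in the discrete pair $(I_p,a)$ so that it commutes with the simplicial structure maps; merely noting that $\GLh_{n+m}(I_p)$ contains the needed elementaries does not give this. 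The paper isolates the functoriality problem by first stripping off the $\GLh_n(I)$-factor via Lemma~\ref{lem.vanreho}: the required homotopy then lives entirely on $C(\rM_{n,1}(I))\to C(\rT_{n,m}(I)\ltimes\rM_{n+m,1}(I))$, where the inductive construction of Lemma~\ref{ex.cll2} is manifestly natural in $(I,a)$. Without that reduction you would have to build a homotopy that is simultaneously equivariant for the grouplike monoid $\GLh_n(I)$ and natural in $I$, which your sketch does not address.
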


Set
\[
\rG_{n,m} (I) = \begin{pmatrix} \GLh_n(I) & \rM_{n,m}(I) \\ 0 & \rT_m(I)    \end{pmatrix},
\]
where we write $\rT_m(I)$ for the upper triangular matrices. 
In order to show Claim~\ref{ex.cl1} we consider the commutative diagram of morphisms of simplicial
monoids
\[
\xymatrix@C=4ex{
\GLh_n(I) \ar[d]  \ar[r] &  \GLh_n(I) \ar[r] \ar[d] &  \GLh_{n+m}(I) \ar[d] \\
\GLh_n(I) \ltimes \rM_{n,1}(I) \ar[r]^-{a} & \rG_{n,m}(I) \ltimes \rM_{n+m,1}(I) \ar[r] & \GLh_{n+m}(I) \ltimes \rM_{n+m,1}(I)
}
\]
and the relative homology of the vertical maps.
So it is sufficient to see that 
\[
H_i(\GLh_n(I) \ltimes \rM_{n,1}(I), \GLh_n(I)) \xrightarrow{a} H_i( \rG_{n,m}(I) \ltimes \rM_{n+m,1}(I), \GLh_n(I)) 
\]
vanishes for $i\le m$, 

We write $\rT_{n,m}(I)$ for the kernel of the projection  to the upper left block
$\rG_{n,m}(I) \to \GLh_n(I)$, so $\rG_{n,m}(I)=\GLh_n(I) \ltimes \rT_{n,m}(I)$.
In view of Lemma~\ref{lem.vanreho} from the Appendix, the proof of Proposition~\ref{ex.prop.trivac} is finished by applying Lemma~\ref{ex.cll2} below.
\end{proof}

\begin{lem}\label{ex.cll2}
The $\rM_{n,n}(I)$-equivariant morphism
\[
\varphi \colon \rM_{n,1}(I) \xrightarrow{a} \rT_{n,m}(I) \ltimes \rM_{n+m,1}(I)
\]
is homologically $m$-constant. More precisely, there is an $m$-homotopy as in
Definition~\ref{ex.def.homoto} which is $\rM_{n,n}(I)$-equivariant and functorial in the pair $(I,a)$.
\end{lem}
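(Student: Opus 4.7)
The plan is to construct an explicit $\rM_{n,n}(I)$-equivariant chain-level $m$-homotopy in the bar complex of $\rT_{n,m}(I)\ltimes \rM_{n+m,1}(I)$, adapting Suslin's strategy from \cite[Thm.~3.5]{Suslin} to the simplicial setting.

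The starting observation is that $\varphi(v)$ lies in the commutator subgroup of the semidirect product. Setting $T_i=1_{n+m}+a\,E_{i,n+1}\in \rT_{n,m}(I)$ and $w_i(v)=\binom{0}{v_i\,e_1}\in \rM_{n+m,1}(I)$ (with $e_1$ the first basis vector of the bottom $m$-block), a direct computation in the semidirect product gives
\[
\bigl[(T_i,0),\,(1,w_i(v))\bigr] \;=\; (1,(T_i-1)w_i(v)) \;=\; \bigl(1,\tbinom{a v_i\,e_i}{0}\bigr),
\]
so $\varphi(v)=\prod_{i=1}^{n}\bigl[(T_i,0),(1,w_i(v))\bigr]$, already exhibiting $\varphi$ as a $1$-boundary in the bar complex. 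To upgrade to a bona-fide $m$-homotopy I would exploit the growing room in $\rT_{n,m}$ as $m$ increases: instead of placing $a$ only in column $n+1$, one distributes it across all $m$ available columns $n+1,\ldots,n+m$, using matrices $1+a\,E_{i,n+j}$ together with shifted vectors $\binom{0}{v_i\,e_j}$. Suitably nested iterated-commutator (or higher Hall/shuffle) expressions built from these pieces assemble into an $m$-chain in the bar complex whose boundary is the chain-level difference $\varphi(v)-\mathrm{const}$, which is exactly the data required by Definition~\ref{ex.def.homoto}.

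For the $\rM_{n,n}(I)$-equivariance, the action on $\rM_{n,1}(I)$ is $v\mapsto v+uv$ via the embedding $\rM_{n,n}(I)\hookrightarrow \GLh_n(\tilde I)$, $u\mapsto 1_n+u$, and this same embedding into the upper-left block of $\rG_{n,m}(\tilde I)\ltimes \rM_{n+m,1}(\tilde I)$ induces the action on the target by conjugation. Since every formula above is polynomial in $a$ and the entries of $v$, equivariance of the constructed $m$-chain is manifest, as is functoriality in $(I,a)$ and degreewise compatibility with the simplicial structure of $I$. The main obstacle is the combinatorial bookkeeping of the previous paragraph -- tracking signs, face-boundary contributions, and degeneracy terms so that the constructed $m$-chain has precisely the prescribed boundary -- which is the technical heart of Suslin's argument already in the discrete case. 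In the simplicial case it lifts automatically because all operations are given by ring-theoretic (polynomial) formulas in $a$ and the matrix entries, hence commute with all simplicial face and degeneracy maps of $I$.
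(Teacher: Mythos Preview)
Your proposal has a genuine gap: the step you call ``the main obstacle'' is in fact the entire content of the lemma, and you do not carry it out. Writing $\varphi(v)$ as a product of commutators $[(T_i,0),(1,w_i(v))]$ only shows that the image of $\varphi$ lies in the commutator subgroup, which controls $H_1$; it gives you a $1$-homotopy, not an $m$-homotopy. Your promise that ``suitably nested iterated-commutator (or higher Hall/shuffle) expressions'' assemble into the required $m$-chain is exactly what has to be proved, and you offer no mechanism for producing those chains or checking their boundaries. Deferring this as ``combinatorial bookkeeping'' is not acceptable here, because the difficulty is not bookkeeping but finding the right structural reason why the homotopy climbs one degree for each extra column.

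The paper supplies that reason via an induction on $m$ organised around two constructions from the appendix. First, it expresses $\varphi(v)$ as a \emph{single} commutator rather than a product of $n$ of them: with $\psi\colon \rM_{n,1}(I)\to \rT_{n,m}(I)$ placing $v$ in the last column and $u=(0,\dots,0,a)^t\in \rM_{n+m,1}(I)$, one has $u^{-1}\psi(v)u=\psi(v)\cdot\varphi(v)$. Conjugation invariance (Construction~\ref{ex.const.conj}) then gives a full homotopy $C(\psi)\simeq C(\psi\cdot\varphi)$. Second, since the images of $\psi$ and $\varphi$ commute and $\varphi$ factors through $\rT_{n,m-1}(I)\ltimes \rM_{n+m-1,1}(I)$, the inductive hypothesis makes $\varphi$ homologically $(m-1)$-constant; the Eilenberg--Zilber construction (Construction~\ref{const.eilzil}) then upgrades this to an $m$-homotopy $C(\psi\cdot\varphi)\simeq C(\psi)+C(\varphi)$. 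Concatenating gives $C(\varphi)\simeq 0$ in degrees $\le m$. This is the structural input you are missing: the induction and the Eilenberg--Zilber step are what make each additional column buy exactly one more degree of homotopy, and both constructions are visibly functorial in $(I,a)$ and $\rM_{n,n}(I)$-equivariant. Your decomposition into $n$ separate commutators, by contrast, does not interact well with induction on $m$ and leaves you with a product of $n$ pieces to recombine at each stage, which is why you could not finish.
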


\begin{proof}
As our construction will be functorial in the pair $(I,a)$ we can assume without loss of
generality that $I$ is a (non-unital) discrete ring.
We use induction on $m\ge 0$, the case $m=0$ being clear. 

Consider now $m>0$ and assume the lemma has been shown for smaller $m$. 
Let $\psi\colon \rM_{n,1}(I)\to \rT_{n,m}(I)$ be the homomorphism which puts an $n$-vector into
the first $n$-entries of the
last column of the identity matrix. Set $u=(0, \ldots , 0, a)^t\in \rM_{n+m,1}(I)$. 
We have the
equality 
\[
u^{-1}\psi(v)u = \psi(v) \cdot \varphi(v)\quad \text{ for } v\in \rM_{n,1}(I).
\]
Hence Construction~\ref{ex.const.conj} gives  a
\begin{itemize}
\item[(i)]\label{item:someitemi}  homotopy between $C(\psi)$ and $C(\psi\cdot\phi)$ 
\end{itemize}
which is functorial in the pair $(I,a)$ and in particular $\rM_{n,n}(I)$-equivariant.

As the images of $\varphi$ and $\psi$ commute and as $\varphi$ factors through
\[
 \rM_{n,1}(I) \xrightarrow{a} \rT_{n,m-1}(I) \ltimes \rM_{n+m-1,1}(I),
\]
which by our induction assumption is homologically $(m-1)$-constant,  Construction~\ref{const.eilzil} gives
a functorial and hence $\rM_{n,n}(I)$-equivariant
\begin{itemize}
\item[(ii)]\label{item:someitemii} $m$-homotopy between
$C(\psi \cdot \varphi ) $ 
and $C(\psi) + C(\varphi)$.
\end{itemize}
Putting the homotopies from (i)
and (ii) together we see that $\varphi$ is homologically $m$-constant in a functorial and
$\rM_{n,n}(I)$-equivariant way.
\end{proof}

\medskip

Before we give the proof of Theorem~\ref{ex.mainthm},  let us introduce some notation.
Consider a pro-system of abelian groups $(F_n)_{n\in I}$. When there is a group $G$ acting
compatibly on all $F_n$ we say that this action is \emph{pro-trivial} if for each $n\in I$ there
exists $m\to n$ in $I$  such that $G$ acts trivially on the image of $F_m\to F_n$. 

Now we begin with the \emph{proof of Theorem~\ref{ex.mainthm}} using an induction on $r$.

\medskip

\noindent {\it Base case $r=1$.}
From Theorem~\ref{thm:identificationofthenegativekgroups} we know  that for $i\le 1$ and any simplicial ring $C$ the natural map
\[
K_i(C) \to K_i (\pi_0 C)
\]
is an isomorphism. This, together with excision for non-positive $K$-theory of
discrete rings
\cite[Thm.~III.4.3]{Weib13}, implies that
\[
\prolim{n,m} K_i(A_m, A_m(n)) \xrightarrow{\cong}   \prolim{n,m} K_i(B_m, B_m(n)) 
\]
is an isomorphism for $i\le 0$.
Hence, in order to prove Theorem~\ref{ex.mainthm}, we can assume that  $i>0$.

By replacing $A_m$ and $B_m$ by equivalent simplicial rings we can assume without loss of
generality that $a=a_1$ is a non-zero divisor on all of them. Then the canonical maps
$A_m(n)\to A_m/a^n A_m$ and $B_m(n)\to B_m/a^n B_m$ are equivalences. 
 
 For simplicity of notation
we suppress the index $m$ in the following. If $m$ is not mentioned then the result  
holds for each $m>0$ separately.

Consider the two homotopy fibres
\begin{align*}
\tilde F(n) &= \hofib ( \BGLh (A)^+ \to \BGLh'(A/a^nA)^+ ), \\
\tilde G(n) &= \hofib ( \BGLh (B)^+ \to \BGLh'(B/a^nB)^+ ). 
\end{align*}
Here $\GLh'(A/a^nA) \subseteq \GLh(A/a^nA)$ is the preimage of \[ \im (\GL(\pi_0(A) ) \to
\GL(\pi_0(A/a^n A)) )\]   and correspondingly
for $B/a^nB$. Then $\BGLh'(A/a^nA)^+$ is equivalent to the connected covering of
$\BGLh(A/a^nA)^+$ corresponding to the image of $\GL (\pi_0 A)$ in $\pi_1 \BGLh(A/a^nA)^+$
and correspondingly for $B/a^nB$.
This means that
\[
\pi_i \tilde F(n) = K_i(A,A(n))\quad \text{ and } \quad \pi_i \tilde G(n) = K_i(B,B(n))
\]
for $i\ge 1$. Combining this observation with Proposition~\ref{eq.propwhite} we see that
it suffices to prove:

\begin{claim}\label{ex.cl.whitiso}
For all $i\ge 0$ the map
\[
\prolim{n,m} H_i (\tilde F(n)) \to \prolim{n,m} H_i (\tilde G(n))
\]
is an isomorphism of pro-groups.
\end{claim}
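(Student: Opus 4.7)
The approach follows Suslin's proof of excision in algebraic $K$-theory, adapted to the pro-setting of simplicial rings; the main new ingredient is Proposition~\ref{ex.prop.trivac}, which takes the role of Suslin--Wodzicki's Tor-unitality in the pro-setting. First, I would replace the homotopy fibre $\tilde F(n)$ by a pre-plus model. Let $N_n^A = \ker(\GLh(A_m)\to \GLh'(A_m/a^n A_m))$ and define $N_n^B$ analogously; then $\rB N_n^A$ is the homotopy fibre of the unplussed map $\BGLh(A_m)\to \BGLh'(A_m/a^n A_m)$. Since $\BGLh'(A_m/a^n A_m)^+$ is the connected cover of $\BGLh(A_m/a^n A_m)^+$ picking out the image of $\GL(\pi_0 A_m)$, a standard acyclicity argument for the plus construction identifies $H_*(\tilde F(n)) \cong H_*(\rB N_n^A)$, and analogously for $B_m$.

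By construction $\pi_0 N_n^A = G_n^A := \ker(\GL(\pi_0 A_m) \to \GL(\pi_0(A_m/a^n A_m)))$ and $\pi_i N_n^A = \rM(a^n \pi_i A_m)$ for $i\geq 1$ by Lemma~\ref{lemma:segalstheoremappliedtobglhat}; these agree in positive degrees with the homotopy groups of $\GLh(a^n A_m)$. Applying Proposition~\ref{ex.prop.trivac} with $I = a^{n-1}A_m$ shows that the $\GL(\Z)$-action on the image of $H_i(\BGLh(a^n A_m)) \to H_i(\BGLh(a^{n-1}A_m))$ is trivial; since the $G_n^A$-conjugation action on $\GLh(a^n A_m)$ factors through the usual stabilization $G_n^A\to\GL(\Z)$, this yields pro-triviality of the $G_n^A$-action on $\prolim{n} H_i(\BGLh(a^n A_m))$, and similarly for $B$. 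The hypothesis of Theorem~\ref{ex.mainthm} combined with Lemma~\ref{lemma:segalstheoremappliedtobglhat} produces an isomorphism of homotopy pro-groups between the connected infinite loop (hence nilpotent, by Proposition~\ref{prop:plusequalswaldhausen}) spaces $\BGLh(a^n A_m)^+$ and $\BGLh(a^n B_m)^+$, so Proposition~\ref{eq.propwhite} furnishes a pro-isomorphism on integral homology. Combining this with a comparison of Hochschild--Serre spectral sequences for the extensions $\GLh(a^n A_m) \to N_n^A \to G_n^A$ and its $B$-analogue, together with the identification of $G_n^A$ and $G_n^B$ in the pro-sense furnished by Lemma~\ref{ex.lemmodid}, yields the desired pro-isomorphism $\prolim{n,m} H_i(\tilde F(n)) \cong \prolim{n,m} H_i(\tilde G(n))$.

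The principal technical obstacle is the identification in the first step: the plus construction is not exact on fibrations, and $H_*(\tilde F(n))\cong H_*(\rB N_n^A)$ requires a pro-nilpotency condition on the $\pi_1$-action on fibre homology, which is ultimately secured by Proposition~\ref{ex.prop.trivac}. A secondary subtlety is handling the $\pi_0$-discrepancy between $\GLh(a^n A_m)$ and $N_n^A$: the two monoids have equivalent higher homotopy groups but their $\pi_0$'s may differ, and this difference must be absorbed into the Hochschild--Serre comparison. This whole package is the pro-analogue of the Suslin--Wodzicki Tor-unitality mechanism, here rendered automatic by the Artin--Rees vanishing in Lemma~\ref{ex.koslem}.
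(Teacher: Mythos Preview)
Your broad outline matches the paper's, but there are two genuine gaps.

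\textbf{The action does not factor through $\GL(\Z)$.} You assert that ``the $G_n^A$-conjugation action on $\GLh(a^n A_m)$ factors through the usual stabilization $G_n^A\to\GL(\Z)$''. There is no such map: $G_n^A\subseteq \GL(\pi_0 A_m)$ has no natural homomorphism to $\GL(\Z)$. Moreover, what is actually needed for the comparison $H_*(F(n))\cong H_*(\tilde F(n))$ via the Serre spectral sequence is pro-triviality of the action of the full group $\pi_1\BGLh(A)=\GL(\pi_0 A)$, not just of $G_n^A$. Proposition~\ref{ex.prop.trivac} only yields triviality of the $\GL(\Z)$-action on the relevant image. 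The paper bridges the gap by a separate ``doubling'' argument: one maps $F_\ell(n')\times F_\ell(n')$ into $F(n)$ via a block-diagonal embedding $\iota$; since the permutation of the two blocks is effected by an element of $\GL(\Z)$, the images $\iota_*(H_i(F_\ell(n'))\oplus 0)$ and $\iota_*(0\oplus H_i(F_\ell(n')))$ coincide, which forces the $\GLh_\ell(A)$-action on the second factor to be trivial. This step is essential and is absent from your sketch.

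\textbf{The ``extension'' is spurious.} Your sequence $\GLh(a^n A_m)\to N_n^A\to G_n^A$ does not exist as written: both simplicial monoids have $\pi_0=G_n^A$, since for a non-unital ideal $J\subseteq R$ one has by definition $\GL(J)=\ker(\GL(R)\to\GL(R/J))$. In fact the paper observes that $\prolim{n}\BGLh(a^nA)\xrightarrow{\simeq}\prolim{n}F(n)$ is an equivalence of pro-spaces (equation~\eqref{ex.eq10a}), so no Hochschild--Serre comparison is needed at this stage, and there is no ``$\pi_0$-discrepancy'' to absorb. Once pro-triviality of the $\GL(\pi_0 A)$-action is established, the comparison $\prolim{n,m}H_i(F(n))\cong\prolim{n,m}H_i(G(n))$ follows directly from the hypothesis of Theorem~\ref{ex.mainthm} via the homotopy pro-group computation and Lemma~\ref{ex.prop.homequ}; the detour through $\BGLh(a^nA_m)^+$ and Proposition~\ref{eq.propwhite} is unnecessary.
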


Now consider the homotopy fibres
\begin{align*}
 F_\ell(n) &= \hofib ( \BGLh_\ell (A) \to \BGLh'_\ell(A/a^nA) ), \\
 G_\ell(n) &= \hofib ( \BGLh_\ell (B) \to \BGLh'_\ell(B/a^nB) ), 
\end{align*}
and set $F(n)=F_\infty (n)$, $G(n)=G_\infty(n)$.  Note that by Lemma~\ref{ex.lemmodid} and
Lemma~\ref{lemma:segalstheoremappliedtobglhat} we
get isomorphisms of pro-groups
\begin{align}\label{ex.eq10a}
\prolim{n} \GLh (a^n \pi_{i-1}(A)) &\cong \prolim{n} \pi_i F (n), \\ 
\label{ex.eq10b}  \prolim{n} \GLh ( a^n \pi_{i-1}(B)) &\cong \prolim{n} \pi_i G (n),
\end{align}
where $\GLh$ stands for $\GL$ if $i=1$ and for $\rM$ if $i>1$.

In particular 
\begin{equation}\label{ex.eq11}
\prolim{n,m} F(n) \xrightarrow{\simeq} \prolim{n,m} G(n)
\end{equation}
is an equivalence of pro-spaces.

We reduce Claim~\ref{ex.cl.whitiso} with the help of \eqref{ex.eq11} and Lemma~\ref{ex.prop.homequ} to:

\begin{claim}\label{ex.cl.withplus}
For each $i\ge 0$ the maps
\begin{align*}
\prolim{n} H_i(F(n) ) \to \prolim{n} H_i (\tilde F(n)),\\
\prolim{n} H_i(G(n) ) \to \prolim{n} H_i (\tilde G(n))\phantom{,} 
\end{align*}
are isomorphisms of pro-groups.
\end{claim}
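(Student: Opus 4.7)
The plan is to compare the Serre spectral sequences for the two rows of the commutative square of fibre sequences
\[
\xymatrix{
F(n) \ar[r] \ar[d] & \BGLh(A) \ar[r] \ar[d] & \BGLh'(A/a^{n}A) \ar[d] \\
\tilde F(n) \ar[r] & \BGLh(A)^{+} \ar[r] & \BGLh'(A/a^{n}A)^{+}
}
\]
in which the middle and right vertical arrows are acyclic by the defining property of Quillen's plus construction. Since acyclic maps are integral homology equivalences, the two Serre spectral sequences abut to isomorphic pro-groups in the index $n$; thus, once one knows that $\pi_{1}$ of the base acts \emph{pro-trivially} on the pro-homology of the fibre, untwisting of the local coefficients forces $\prolim{n} H_{*}(F(n)) \to \prolim{n} H_{*}(\tilde F(n))$ to be an isomorphism of pro-groups. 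I would carry out the argument only for $F$, since the proof for $G$ is verbatim the same with $B$ in place of $A$.

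The heart of the matter is therefore to establish the pro-triviality of the conjugation action of $\pi_{1}(\BGLh'(A/a^{n}A)) = \im(\GL(\pi_{0} A) \to \GL(\pi_{0}(A/a^{n}A)))$ on $\prolim{n} H_{i}(F(n))$. By~\eqref{ex.eq10a} the pro-homotopy groups of $F(n)$ identify with those of the pro-system $\prolim{n} \GLh(a^{n} \pi_{i-1}(A))$, and the action under consideration translates to the natural conjugation action of $\GL(\pi_{0} A)$. I would then invoke Proposition~\ref{ex.prop.trivac}, applied to the non-unital simplicial ring $\pi_{i-1}(A)$ and the element $a$ --- which, after the reduction to the non-zero-divisor case made earlier, acts injectively in every degree. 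The proposition gives triviality of the $\GL(\Z)$-action on the image of $H_{*}(\GLh(a^{n}\pi_{i-1}(A)))$ in $H_{*}(\GLh(\pi_{i-1}(A)))$; moreover, the entire argument for it --- which reduces to the homological $m$-constancy of Lemma~\ref{ex.cll2} --- is manifestly functorial in the pair $(I,a)$ and the homotopies it produces are $\rM_{n,n}(I)$-equivariant, hence insensitive to enlarging the conjugating group. This promotes the $\GL(\Z)$-triviality to the required pro-triviality of the action of the full group $\GL(\pi_{0}A)$.

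The main obstacle is precisely this last upgrade: one must verify that every step in the proof of Proposition~\ref{ex.prop.trivac} goes through with $\GL(\Z)$ replaced by $\GL(\pi_{0}A)$ as the conjugating group, by checking that the naturality of Lemma~\ref{ex.cll2} really is strong enough to absorb the larger action. Once the pro-trivial action is in hand, the Serre spectral sequence comparison (equivalently, a pro-version of Quillen's plus-construction fibration theorem) applied together with the pro-version of Whitehead's theorem (Proposition~\ref{eq.propwhite}) delivers the claimed pro-isomorphism $\prolim{n} H_{i}(F(n)) \xrightarrow{\cong} \prolim{n} H_{i}(\tilde F(n))$, and symmetrically for $G$.
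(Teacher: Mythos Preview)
Your overall architecture---compare the Serre spectral sequences of the two fibrations before and after applying the plus construction, and reduce Claim~\ref{ex.cl.withplus} to the pro-triviality of the $\pi_{1}$-action on $\prolim{n}H_{i}(F(n))$---is exactly what the paper does (this is the reduction of Claim~\ref{ex.cl.withplus} to Claim~\ref{ex.cl.triac}, with references to \cite[Cor.~1.7]{SuslinWod} and \cite[Lem.~1.2]{GeisserHesselholt}).

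There is one slip and one genuine gap. The slip: Proposition~\ref{ex.prop.trivac} is not applied to ``the non-unital simplicial ring $\pi_{i-1}(A)$''---for $i>1$ that is only a module, not a ring. One first identifies $\prolim{n}F(n)\simeq\prolim{n}\BGLh(a^{n}A)$ as pro-\emph{spaces} (not merely on homotopy groups), and then applies Proposition~\ref{ex.prop.trivac} with $I=a^{n}A$ and the element $a^{n}\in I_{0}$ to conclude that $\GL(\Z)$ acts trivially on the image of $H_{i}(\GLh(a^{2n}A))\to H_{i}(\GLh(a^{n}A))$, hence pro-trivially.

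The gap is the passage from $\GL(\Z)$- to $\GL(\pi_{0}A)$-triviality. The $\rM_{n,n}(I)$-equivariance in Lemma~\ref{ex.cll2} concerns the non-unital ring $I$ and feeds into Lemma~\ref{lem.vanreho} to kill a relative homology map; but the very first step of the proof of Proposition~\ref{ex.prop.trivac}---the reduction ``arguing as in \cite[Prop.~1.5]{SuslinWod}''---is a statement specifically about the $\GL(\Z)$-action (it uses that $\GL(\Z)$ is generated by elementary and permutation matrices). Functoriality of the later homotopies in $(I,a)$ does not buy you a version of that reduction for the larger group $\GL(\pi_{0}A)$, so your proposed upgrade does not go through as stated. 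The paper instead bootstraps from $\GL(\Z)$ to $\GL_{\ell}(\pi_{0}A)$ by a swap trick: given $n$, choose $n'\ge n$ so that $\GL(\Z)$ acts trivially on $\im(H_{i}(F(n'))\to H_{i}(F(n)))$; embed $F_{\ell}(n')\times F_{\ell}(n')\to F(n)$ block-diagonally; since the block-swap lies in $\GL(\Z)$, the images of the two factors coincide, whence conjugation by $\GLh_{\ell}(A)$ on one factor (trivial on the other) is forced to be trivial on the common image. This is the argument you are missing.
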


Note that
there is a canonical action of $\pi_1\BGL (A)$ on $F(n)$ up to homotopy. Using a standard
comparison of Serre spectral sequences, see \cite[Cor.~1.7]{SuslinWod} or \cite[Lem.~1.2]{GeisserHesselholt}, we observe that 
Claim~\ref{ex.cl.withplus} follows from:

\begin{claim}\label{ex.cl.triac}
The group $\pi_1\BGLh (A)$ respectively $\pi_1\BGLh (B)$   acts pro-trivially on
$\prolim{n} H_i(F(n) )$ respectively $\prolim{n} H_i(G(n) )$   for all
$i\ge 0$.
\end{claim}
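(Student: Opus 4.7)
By symmetry I treat only the assertion for $A$; the argument for $B$ is identical. Following the approach of Suslin--Wodzicki and Geisser--Hesselholt, I plan to rewrite the $\pi_1\BGLh(A)$-action on $H_i(F(n))$ as a conjugation action of $\GL(\pi_0 A)$ on classifying spaces of $\GLh$ of a simplicial ideal, and then to invoke Proposition~\ref{ex.prop.trivac} after reducing general group elements to integer-coefficient elementary matrices.

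First I will unravel the action. By \eqref{ex.eq10a} the pro-space $\{F(n)\}$ has the same pro-homotopy groups as $\{\BGLh(a^n A)\}$, where $a^n A \subset A$ is viewed as a non-unital simplicial ideal; a pro-Whitehead argument via Proposition~\ref{eq.propwhite} (all spaces are pointed connected and nilpotent in the pro-sense) turns this into a pro-equivalence. Under this identification, and after choosing a set-theoretic lift $\GL(\pi_0 A) \to \GL(A_0) \subset \GL(A)$, the $\pi_1\BGLh(A) \cong \GL(\pi_0 A)$-action on $F(n)$ becomes the ordinary conjugation action of $\GL(A_0)$ on $\GLh(a^n A) \subset \GLh(A)$.

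Next I will reduce from arbitrary $g\in \GL_k(\pi_0 A)$ to elementary matrices over $\Z$. By Whitehead's lemma, after enlarging $k$, the block-diagonal matrix $g \oplus g^{-1}$ is a product of elementary matrices $e_{\alpha\beta}(r)$ with $r\in \pi_0 A$; and upon stabilization to $\GLh$ the actions of $g$ and of $g\oplus g^{-1}$ coincide on $H_i(\GLh(a^n A))$. Next, writing a lift $\tilde r \in A_0$ of $r$ and using the pro-system direction, the element $\tilde r$ can be approximated modulo $a^{n'}$ by an integer $r_\Z \in \Z$ up to an error lying in $a^{n'} A_0$. The conjugation of $\GLh(a^n A)$ by $e_{\alpha\beta}(\tilde r)$ is then compared, via the transition $H_i(\GLh(a^{n'} A)) \to H_i(\GLh(a^n A))$ with $n' \gg n$, to conjugation by $e_{\alpha\beta}(r_\Z) \in \GL(\Z)$; the error, involving factors in $a^{n'} A_0$, is absorbed into an explicit $\GL(A_0)$-equivariant homotopy built by applying Construction~\ref{ex.const.conj} and Lemma~\ref{ex.cll2} in the spirit of the proof of Proposition~\ref{ex.prop.trivac}.

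Finally, Proposition~\ref{ex.prop.trivac} applied to the non-unital simplicial ring $A$ and the element $a$ yields that $\GL(\Z)$ acts trivially on the image of $H_i(\GLh(a^n A)) \to H_i(\GLh(A))$. Combined with the previous reduction and Lemma~\ref{ex.koslem} (used to kill the pro-system of the kernels $\ker(H_i(\GLh(a^n A)) \to H_i(\GLh(A)))$ in Koszul-type filtrations on the pro-system in $n$), this gives the required pro-triviality of the $\GL(\pi_0 A)$-action. The hardest step I expect is this final reduction: Proposition~\ref{ex.prop.trivac} directly provides $\GL(\Z)$-triviality only on the image in $H_i(\GLh(A))$, so one must carefully leverage both the pro-direction and the functoriality/equivariance built into Lemma~\ref{ex.cll2} to promote this to pro-triviality on the full group $H_i(F(n))$ under the action of all of $\GL(\pi_0 A)$.
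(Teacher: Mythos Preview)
Your proposal contains a genuine gap in the reduction step. You claim that an element $\tilde r \in A_0$ can be approximated by an integer $r_\Z \in \Z$ modulo $a^{n'}A_0$, i.e.\ that $\tilde r - r_\Z \in a^{n'}A_0$ for some integer $r_\Z$. This is false in general: there is no reason the map $\Z \to A_0/a^{n'}A_0$ should be surjective. For a concrete counterexample take $A = k[x]$ for a field $k$ of characteristic zero and $a = x$; a scalar $c \in k \setminus \Z$ differs from every integer by a nonzero constant, which never lies in $x^{n'}k[x]$. Consequently the entire mechanism for reducing the $\GL(\pi_0 A)$-action to a $\GL(\Z)$-action collapses, and the subsequent invocation of Proposition~\ref{ex.prop.trivac} does not get off the ground. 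The appeal to Lemma~\ref{ex.koslem} at the end is also unclear: that lemma concerns Koszul homology of finitely generated modules, not kernels of maps between group-homology pro-systems, and no filtration making this applicable is indicated.

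The paper avoids this obstruction by a completely different and much simpler device: the \emph{doubling/swapping trick}. Starting from the pro-triviality of the $\GL(\Z)$-action (your first step is essentially correct here), the paper fixes $n' \ge n$ so that $\GL(\Z)$ acts trivially on the image of $H_i(F(n')) \to H_i(F(n))$, then considers the block-diagonal map
\[
\iota\colon F_\ell(n') \times F_\ell(n') \longrightarrow F(n), \qquad (g_1,g_2) \longmapsto \mathrm{diag}(g_1,g_2,1_\infty).
\]
The permutation $\sigma$ exchanging the two factors is realized by conjugation with a matrix in $\GL(\Z)$, so $\iota_* = \iota_* \circ \sigma_*$ on homology. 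Now $\pi_1\BGLh_\ell(A)$, embedded in the \emph{first} $\ell\times\ell$ block, acts trivially on classes pushed in via the \emph{second} factor; but by the swap these coincide with classes pushed in via the first factor. Letting $\ell\to\infty$ gives pro-triviality of the full $\GL(\pi_0 A)$-action. No approximation of ring elements by integers is needed; only the single permutation matrix in $\GL(\Z)$ is used.
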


We restrict to $A$ in the proof of Claim~\ref{ex.cl.triac}.
The map
\[
\prolim{n}\BGLh (a^nA) \xrightarrow{\simeq}\prolim{n} F(n) 
\]
is an equivalence by~\eqref{ex.eq10a} and Lemma~\ref{lemma:segalstheoremappliedtobglhat}. Combining this observation with
Proposition~\ref{ex.prop.trivac} we see that
\begin{equation}\label{ex.eq.act1}
\GL(\Z) \text{ acts pro-trivially on } \prolim{n} H_i(F(n) ),
\end{equation}
where the action is by conjugation.

Suppose $n>0$ and $i\ge 0$ are fixed for the moment. From~\eqref{ex.eq.act1} we deduce that there exists $n'\ge n$ such that the
conjugation action of $\GL(\Z)$ on the image of
\[
H_i(F(n')) \to H_i(F(n))
\]
is trivial.

Consider the canonical morphism 
\[
\iota: F_\ell(n')\times F_\ell(n')\to F(n), \quad \quad(\ell>0)
\]
which is induced by the map of matrices 
\[
(g_1,g_2)\mapsto \begin{pmatrix} g_1 & 0 &0 \\ 0 & g_2 & 0 \\ 0 & 0 & 1_\infty \end{pmatrix}.
\]
Let \[\sigma \colon F_\ell(n')\times F_\ell(n') \to F_\ell(n')\times F_\ell(n')\]
be the permutation map. By our choice of $n'$ we get the equality
\[
\iota_* = \iota_* \circ \sigma_* \colon H_i(F_\ell(n')\times F_\ell(n')) \to H_i(F(n)).
\]
So $\pi_1 \BGLh_\ell(A)$ acts trivially on \[\iota_* (H_i(F_\ell(n'))\oplus 0 ) = \iota_*
  (0\oplus H_i(F_\ell(n')) ). \]
This shows Claim~\ref{ex.cl.triac} and finishes the proof of the base case $r=1$.

\medskip

\noindent {\it Inductive step for  $r-1\rightsquigarrow r$.}
We suppress the index $m$ in $A_m $ and $B_m$ for simplicity of notation. 
We assume Theorem~\ref{ex.mainthm} is known for sequences with fewer than $r>1$ elements. 
Let $A'(n)$ be $\Ko(A;a_r^n)$ and let $B'(n)$ be $\Ko(B; a_r^n)$. Set $\mathbf a'(n)=
(a_1^n,\ldots , a_{r-1}^n )$.
Note that \[A(n) =\Ko( A'(n);\mathbf a'(n)  ) \quad\text{ and }\quad B(n) =\Ko( B'(n);\mathbf a'(n)  ). \]
From the assumptions of the theorem one easily deduces the isomorphisms
\begin{align*}
\prolim{n,m} a_r^n \pi_i A &\xrightarrow{\cong}  \prolim{n,m} a_r^n \pi_i B, \\
\prolim{n,m} \mathbf a'(n) \pi_i A'(n)  &\xrightarrow{\cong} \prolim{n,m} \mathbf a'(n) \pi_i B'(n).
\end{align*}
The second isomorphism follows from the first and Lemma~\ref{ex.lemmodid}.
By our induction assumption the left and right squares in
\[
\xymatrix{
\prolim{m} K(A) \ar[r] \ar[d] & \prolim{n,m} K(A'(n)) \ar[d] \ar[r] & \prolim{n,m}
K(A(n)) \ar[d] \\
\prolim{m} K(B) \ar[r]  & \prolim{n,m} K(B'(n))  \ar[r] & \prolim{n,m} K(B(n))
}
\]
are homotopy cartesian. So the composite square is also homotopy cartesian by the remark following Lemma~\ref{ex.lem.hocopro}. This finishes
the proof of Theorem~\ref{ex.mainthm}.
\qed

\section{Proof of the main theorem}\label{sec.proof}

In this section we prove Theorem~\ref{intro.main.thm}.

\subsection{Descent along finite morphisms}

Let $X$ be a noetherian scheme and consider an abstract blow-up square as in~\eqref{intro.eq1}. 
For notational convenience we use bi-relative $K$-theory $K(X,Y,\tilde X,E)$ of the
square~\eqref{intro.eq1} in this subsection, which is defined as the homotopy
fibre of the map $K(X,Y)\to K(\tilde X, E)$. Using this notation
Theorem~\ref{intro.main.thm} is equivalent to
\begin{equation}\label{pr.bitri}
\prolim{n} K(X, Y_n, \tilde X, E_n) \simeq * .
\end{equation}

\begin{lem}\label{pr.lemredaff}
Assume given an abstract blow-up square~\eqref{intro.eq1} such that for all affine open
$U\subseteq X$ we have
\begin{equation}\label{pr.bitri2}
\prolim{n} K(U, Y_n\times_X U, \tilde X \times_X U, E_n \times_X U)\simeq  *.
\end{equation}
Then~\eqref{pr.bitri} holds.
\end{lem}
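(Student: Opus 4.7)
The plan is to reduce to the affine case by Zariski descent and a double induction. The main tool is Mayer--Vietoris for bi-relative $K$-theory: for any open $W \subseteq X$ and any open cover $W = W_1 \cup W_2$, Zariski descent (Theorem~\ref{thm:Zariski-descent}) applied componentwise to the four schemes in the blow-up square pulled back to $W$, combined with exactness of the homotopy fibre functor, yields for each $n$ a homotopy cartesian square
\[
\xymatrix{
K(W, Y_n \times_X W, \tilde X \times_X W, E_n \times_X W) \ar[r] \ar[d] & K(W_1, \dots) \ar[d] \\
K(W_2, \dots) \ar[r] & K(W_1 \cap W_2, \dots).
}
\]
Since the homotopy cartesian property for pro-spectra in the sense of Definition~\ref{ex.defn.prospec} is tested levelwise, the same square is homotopy cartesian at the level of pro-spectra.

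The first induction handles quasi-affine opens of $X$: if $W$ is a union of $k$ principal opens $D(f_1), \dots, D(f_k)$ of a fixed affine open $U \subseteq X$, then the pro-vanishing \eqref{pr.bitri2} holds for $W$. The base case $k=1$ is exactly the hypothesis of the lemma. For $k > 1$, Mayer--Vietoris applied to $W = D(f_1) \cup (D(f_2) \cup \cdots \cup D(f_k))$ together with the fact that principal opens of $U$ are closed under intersection (so that $D(f_1) \cap (D(f_2) \cup \cdots \cup D(f_k)) = D(f_1 f_2) \cup \cdots \cup D(f_1 f_k)$ is again a union of $k-1$ principal opens of $U$) allows the induction to go through.

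The second induction handles general open subschemes of $X$. Since $X$ is noetherian, any such open admits a finite affine cover; I induct on the length $N$ of such a cover. For $N=1$, the open is affine and the hypothesis applies. For $N > 1$, split the open as $W = U \cup V$ with $U$ affine and $V$ admitting an affine cover of length $N-1$. Applying Mayer--Vietoris, the corners corresponding to $U$ (hypothesis) and $V$ (inductive step) vanish as pro-spectra, and the intersection $U \cap V$, being an open subscheme of the affine $U$, is handled by the first induction. Specializing to $W = X$ gives the lemma. The main technical point is checking that Mayer--Vietoris for bi-relative $K$-theory descends to the pro-level, which is ultimately just the observation that finite homotopy limits are computed componentwise on levels; the bookkeeping with the nested induction on non-affine intersections is the only other subtlety.
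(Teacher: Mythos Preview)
Your argument is correct and follows essentially the same two-step Zariski reduction as the paper. The only differences are packaging: the paper uses the \v{C}ech spectral sequence (Theorem~\ref{thm:Zariski-descent}) in place of your iterated Mayer--Vietoris, and it passes through \emph{separated} opens (where intersections of affines are affine) rather than through quasi-affine opens of a fixed affine; both intermediate classes serve the same purpose of controlling the non-affine intersections that arise in the outer induction.
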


\begin{proof}
Consider the presheaf of spectra $\mathcal F_n$ on $X$ given on an open subset $V\subseteq X$ by
\[
\mathcal F_n(V) =  K(V, Y_n\times_X V, \tilde X \times_X V, E_n \times_X V).
\]

First we show that, under the assumptions of the lemma, \eqref{pr.bitri2} holds for all open
$U\subseteq X$ which are separated but not necessarily affine. In fact for $U\subseteq X$
separated choose a finite affine open
covering $\mathcal V= (V_j)_{j\in \{1, \ldots, r \}} $ of $U$. The  \v{C}ech
spectral (see Theorem~\ref{thm:Zariski-descent})
\begin{equation}\label{pr.cechsp}
E^{pq}_{2} =H^p(\mathcal V , \pi_{-q} \mathcal F_n ) \Rightarrow \pi_{-p-q} \mathcal F_n (U)
\end{equation}
implies $ \prolim{n} \mathcal F_n(U)\simeq  *$.  Note that $E^{pq}_{2}= 0$ for $p>r$,
which guarantees the convergence of the spectral sequence.

In a second step we consider a finite open affine covering $\mathcal U = (U_j)_{j\in \{1,
  \ldots, s \}}$ of $X$ and observe that any intersection $\cap_{j\in  J}U_j$ with
$J\subset \{1,\ldots , s\}$ is separated. Using the analog of \eqref{pr.cechsp} for the
covering $\mathcal U$ we finally deduce that $ \prolim{n} \mathcal F_n(X)\simeq *$.
\end{proof}

\begin{prop}\label{pr.findesc}
Consider an abstract blow-up square~\eqref{intro.eq1} with $\tilde X\to X$ finite. 
For such a square Theorem~\ref{intro.main.thm} holds, i.e.~\eqref{pr.bitri} is an equivalence.
\end{prop}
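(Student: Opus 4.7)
By Lemma~\ref{pr.lemredaff}, the plan begins by reducing to the affine case: $X = \Spec A$, $Y = V(I)$ for an ideal $I \subseteq A$, $\tilde X = \Spec \tilde A$ for a finite ring map $\phi\colon A \to \tilde A$, and $E = V(I\tilde A)$. The hypothesis on the blow-up square becomes that $\phi$ is an isomorphism after inverting $I$. Equivalently, I need to show that the natural map of pro-spectra $\prolim{n} K(A, I^n) \to \prolim{n} K(\tilde A, (I\tilde A)^n)$ is an equivalence.

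I would factor $\phi$ through its image as $A \twoheadrightarrow A' \hookrightarrow \tilde A$. Since $\tilde A$ is finite over $A$, both $K = \ker(A \twoheadrightarrow A')$ and $C = \tilde A/A'$ are finitely generated $A$-modules, and the hypothesis that $\phi$ is an iso away from $V(I)$ forces them to be supported on $V(I)$; hence there is an integer $c \ge 1$ with $I^c K = 0$ and $I^c C = 0$. The Artin--Rees lemma applied to $K \subseteq A$ allows me to enlarge $c$ so that also $I^c \cap K = 0$. These two numerical facts---the conductor bound $I^c \tilde A \subseteq A'$ (from $I^c C = 0$) and the intersection $I^c \cap K = 0$ (from Artin--Rees)---drive the entire argument.

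The plan is to combine two applications of pro-excision (Corollary~\ref{ex.corex1}) with cofinality arguments. For the first, $I^c \cap K = 0$ makes $\phi|_{I^c}\colon I^c \to I'^c$ an $A$-module isomorphism onto the ideal $I'^c = I^c A' \subseteq A'$ (surjectivity follows by lifting elements of $A'$ along $A \twoheadrightarrow A'$); Corollary~\ref{ex.corex1} then yields $\prolim{m} K(A, I^{cm}) \simeq \prolim{m} K(A', I'^{cm})$, and cofinality of $\{cm\}$ in $\mathbb N$ upgrades this to $\prolim{n} K(A, I^n) \simeq \prolim{n} K(A', I'^n)$. For the second, $I^c \tilde A \subseteq A'$ means that $I^c \tilde A$ is simultaneously an ideal of $A'$ and of $\tilde A$ on which the inclusion $A' \hookrightarrow \tilde A$ restricts to the identity; Corollary~\ref{ex.corex1} gives $\prolim{n} K(A', I^n \tilde A) \simeq \prolim{n} K(\tilde A, (I\tilde A)^n)$ after the analogous cofinality argument. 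Finally, the sandwich $I'^n \subseteq I^n \tilde A \subseteq I'^{n-c}$ for $n \ge c$---the second inclusion being $I^n \tilde A = I^{n-c}(I^c \tilde A) \subseteq I^{n-c}A' = I'^{n-c}$---shows that the pro-systems $\{K(A', I'^n)\}$ and $\{K(A', I^n \tilde A)\}$ are interlaced, hence $\prolim{n} K(A', I'^n) \simeq \prolim{n} K(A', I^n \tilde A)$. Concatenating the three equivalences gives $\prolim{n} K(A, I^n) \simeq \prolim{n} K(\tilde A, (I\tilde A)^n)$, which is exactly~\eqref{pr.bitri}. The main obstacle is simply the bookkeeping between three different filtrations living in three different rings; once the conductor and Artin--Rees bounds are in hand, the serious input is Corollary~\ref{ex.corex1}.
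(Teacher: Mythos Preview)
Your proposal is correct and follows essentially the same approach as the paper: reduce to the affine case via Lemma~\ref{pr.lemredaff}, factor $\phi$ through its image, use a conductor bound together with Artin--Rees, and apply Corollary~\ref{ex.corex1} to each factor. The only stylistic difference is that the paper treats the surjective and injective factors as two separate cases, while you run them together and insert the explicit sandwich $I'^n \subseteq I^n\tilde A \subseteq I'^{\,n-c}$ to reconcile the two filtrations on $A'$; this is exactly the content of the paper's terse remark that the relevant pro-systems of ideals are equivalent.
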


\begin{proof}
By Lemma~\ref{pr.lemredaff} we can assume without loss of generality that $X$ is
affine. Say $X=\Spec A$, $\tilde X= \Spec B$. Let $I\subseteq A$ be the ideal defining $Y$
and let $\phi\colon A\to B$ be the canonical ring homomorphism.
As for any $f\in I$ we have
$A_f\xrightarrow{\cong} B_f$, we deduce that there exists $N>0$ such that $ \phi (I^{N})
B\subseteq \phi(A)$ and $I^N\cap \ker (\phi)=0$. For the second equality we use the Artin--Rees lemma.

 Factoring $\phi$ as $A\to
A/\ker(\phi) \to B$ we see that we can assume that $\phi$ is injective or surjective.

\smallskip
\noindent {\it Case $\phi$ is surjective.}
As $\phi$ maps the ideal $J=I^N$ isomorphically onto an ideal of $B$,
Corollary~\ref{ex.corex1} shows that we get an  equivalence of pro-spectra
\[
\prolim{n} K(A,J^n) \xrightarrow{\simeq} \prolim{n} K(B,\phi(J)^n).
\]

\smallskip
\noindent {\it Case $\phi$ is injective.}
The ideal $J=\phi^{-1}(  \phi(I^N) B)$ of $A$ maps isomorphically onto the ideal $
\phi(I^N) B$ of $B$. Furthermore the pro-systems of ideals $\prolim{n} J^n$ and
$\prolim{n} I^n$ are equivalent, so we finish the proof as in the first case using Corollary~\ref{ex.corex1}.
\end{proof}

\subsection{Reduction to classical blow-ups}\label{ex.sub.redcl}

In this subsection we reduce the proof of Theorem~\ref{intro.main.thm} to the case
in which $\tilde X$ is the blow-up of $X$ in the closed subscheme $Y$. We use without
further reference classical properties of blow-ups as summarized in~\cite[Sec.~5.1]{GR}.  For the rest of
this subsection we work with the
\begin{quote}
{\bf Assumption} $(\dagger)$: For any noetherian scheme $X$ and any closed subscheme $Y\to
X$, consider the blow-up
$\tilde X= \Bl_Y (X)$ and the exceptional divisor $E=\tilde X \times_X Y$. Then the square
\begin{equation}\label{pr.blowkth}
\begin{split}
\xymatrix{
K(X) \ar[r] \ar[d] & \prolim{n} K(Y_{n}) \ar[d] \\
K(\Bl_{Y} X) \ar[r] & \prolim{n} K(E_n)
}
\end{split}
\end{equation}
is homotopy cartesian, where as usual $Y_n$ and $E_n$ are infinitesimal thickenings.
\end{quote}
Note that by Lemma~\ref{pr.lemredaff} Assumption $(\dagger)$ holds if it holds for all affine noetherian schemes $X$.

Consider a general abstract blow-up square~\eqref{intro.eq1} with $X$ noetherian. We want
to know whether it induces a homotopy cartesian square on $K$-theory pro-spectra as claimed in Theorem~\ref{intro.main.thm}. 
Using Proposition~\ref{pr.findesc} we can  assume without loss of generality that
$X\setminus Y$ is schematically dense in $X$ and in $\tilde X$. Indeed we can replace $X$
and $\tilde X$ by the
schematic closures of $X\setminus Y$ and we can replace $Y$ and $E$ by their pullbacks without changing $\prolim{n} K(X,Y_n)$ and
$\prolim{n}K (\tilde X, E_n)$ up to equivalence.

By Raynaud--Gruson's {\it platification par \'eclatement} \cite[Sec.~5]{GR} as applied in \cite[Lem.~2.1.5]{Temkin}, we can find a closed
subscheme $Y'\to X$ such that $Y'\subseteq Y$ as sets and such that  $\Bl_{Y'} X \to X$
factors through $\tilde X\to X$. 
Denote by $Y'_n$ the $n$-th infinitesimal thickening of $Y'$ in $X$.

\begin{claim}\label{pr.clblre} Assume that $(\dagger)$ holds. Then  the square
\[
\xymatrix{
K(X) \ar[r] \ar[d] & \prolim{n} K(Y_{n}) \ar[d] \\
K(\Bl_{Y'} X) \ar[r] & \prolim{n} K(Y_n \times_X \Bl_{Y'} X)
}
\]
is homotopy cartesian.
\end{claim}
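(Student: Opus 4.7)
The plan is to deduce the claim from Assumption $(\dagger)$ applied to the pair $(X,Y')$, bridged by a comparison of pro-systems of thickenings.

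First, I would reduce to the situation where $Y' \hookrightarrow Y$ is a closed immersion of schemes. Writing $J, J' \subseteq \mathcal O_X$ for the defining ideals of $Y, Y'$, I replace $Y$ by $\hat Y := V(J \cap J')$; then $Y' \subseteq \hat Y$ as closed subschemes, and $|\hat Y| = |Y|$ because $|Y'| \subseteq |Y|$ gives $\sqrt J \subseteq \sqrt{J'}$ and hence $\sqrt{J \cap J'} = \sqrt J$. The pro-systems $\{Y_n\}$ and $\{\hat Y_n\}$ of infinitesimal thickenings are then mutually cofinal, so $\prolim{n} K(Y_n) \simeq \prolim{n} K(\hat Y_n)$ and similarly after pulling back along $\tilde X' := \Bl_{Y'}X \to X$. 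Hence we may assume $Y' \subseteq Y$ as closed subschemes, yielding compatible closed immersions $Y'_n \hookrightarrow Y_n$ and $E'_n \hookrightarrow F_n$, where $E'_n := \tilde X' \times_X Y'_n$ and $F_n := \tilde X' \times_X Y_n$.

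Next, Assumption $(\dagger)$ applied to $(X,Y')$ provides the homotopy cartesian square
\[
\begin{split}
\xymatrix{
K(X) \ar[r] \ar[d] & \prolim{n} K(Y'_n) \ar[d] \\
K(\tilde X') \ar[r] & \prolim{n} K(E'_n).
}
\end{split}
\]
The morphism of squares induced by the closed immersions $Y'_n \hookrightarrow Y_n$ and $E'_n \hookrightarrow F_n$ (with identities on $K(X)$ and $K(\tilde X')$), together with the pasting lemma for homotopy cartesian squares (cf.\ the remark following Lemma~\ref{ex.lem.hocopro}), reduces the claim to homotopy cartesianness of the comparison square
\[
\begin{split}
\xymatrix{
\prolim{n} K(Y_n) \ar[r] \ar[d] & \prolim{n} K(Y'_n) \ar[d] \\
\prolim{n} K(F_n) \ar[r] & \prolim{n} K(E'_n).
}
\end{split}
\]

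The main work lies in the comparison square. At each level $n$, the underlying diagram $F_n \to Y_n \leftarrow Y'_n$ with $E'_n = F_n \times_{Y_n} Y'_n$ is an abstract blow-up square: properness of $F_n \to Y_n$ and the isomorphism $F_n \setminus E'_n \cong Y_n \setminus Y'$ follow by base change from $\tilde X' \to X$, and $F_n$ is itself a blow-up of $Y_n$ at $J' \cdot \mathcal O_{Y_n}$ (possibly differing from the classical $\Bl_{Y'} Y_n$ by embedded components along the exceptional locus). The idea is to apply $(\dagger)$ at each level $n$ to the pair $(Y_n, Y')$, yielding a doubly indexed system of homotopy cartesian squares in $n$ and an inner thickening index $m$. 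From $|Y'| \subseteq |Y|$ and the noetherian hypothesis, $J^N \subseteq J'$ for some $N$, so that $Y'_m \cap Y_n = Y'_m$ and $E'_m \cap F_n = E'_m$ whenever $n \geq N(m+1)-1$; this cofinality collapses the double pro-system onto the single pro-systems appearing in the right column of the comparison square. Interchanging pro-limit indices via Lemma~\ref{ex.lem.sephoiso} then delivers the desired cartesianness. The anticipated main obstacle is precisely this last step: carefully reconciling the scheme-theoretic pullback $F_n$ with the ordinary blow-up $\Bl_{Y'} Y_n$ and managing the interchange of the two pro-limit indices so that the reduction to single pro-systems produces genuine equivalences of pro-spectra.
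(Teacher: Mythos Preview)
Your overall strategy matches the paper's: apply $(\dagger)$ to the pair $(X,Y')$, reduce to a comparison square, and attack that via $(\dagger)$ applied to $(Y_n,Y')$ together with a double-index argument using Lemma~\ref{ex.lem.sephoiso}. The cofinality observation $Y'_m \cap Y_n = Y'_m$ for $n$ large is also what the paper uses (implicitly) to identify the right column. The preliminary reduction to $Y' \subseteq Y$ as closed subschemes is harmless but unnecessary; the paper simply works with $Y'_m \cap Y_n$ throughout.

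However, the point you flag as the ``anticipated main obstacle'' is a genuine gap, and your description of it is not quite right. The assertion that $F_n = Y_n \times_X \Bl_{Y'} X$ is \emph{itself} the blow-up of $Y_n$ at $J'\cdot\mathcal O_{Y_n}$ is false: $F_n$ is the $\Proj$ of $\bigoplus_d (J')^d \otimes_{\mathcal O_X} \mathcal O_{Y_n}$, whereas the blow-up $\Bl_{Y'\cap Y_n} Y_n$ is the $\Proj$ of $\bigoplus_d (J'\mathcal O_{Y_n})^d$. The second graded ring is a quotient of the first, so $\Bl_{Y'\cap Y_n} Y_n$ is only a closed subscheme of $F_n$ (the strict transform inside the total transform). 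Consequently, $(\dagger)$ applied to $(Y_n,Y')$ produces a cartesian square with $\Bl_{Y'\cap Y_n} Y_n$ in the lower-left corner, not $F_n$, and your comparison square is not yet reached.

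The paper bridges this gap by inserting the strict transform $E''_n := \Bl_{Y'\cap Y_n} Y_n$ as an extra row. The closed immersion $E''_n \hookrightarrow F_n$ is finite and is an isomorphism over $Y_n \setminus Y'$, so it is itself a finite abstract blow-up square with center $Y'_m \cap Y_n$. Proposition~\ref{pr.findesc} (pro-descent for \emph{finite} abstract blow-ups, already established from Corollary~\ref{ex.corex1}) shows this extra square is homotopy cartesian as a pro-system. This is the missing ingredient: once you have it, the pasting argument you describe goes through exactly as in the paper.
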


\begin{proof}[Proof of Claim~\ref{pr.clblre}]
Set $E'_n= Y_n \times_X \Bl_{Y'} X$.  Let $E''_n$ be the schematic closure of $Y_n\setminus Y'$ in
$E'_n$, so $E''_n = \Bl_{Y' \cap Y_n} Y_n $, where $\cap$ is the schematic intersection inside $X$.
Here is a picture of the situation.
\[
\xymatrix{
     & E_{n}'' \ar[d]  \ar@{}[dr]|{\boxed{1}}   & E_{n}'' \times_{Y_{n}} (Y_{m}' \cap Y_{n}) \ar[d]  \ar[l] \\
\Bl_{Y'}X \ar[d]   \ar@{}[dr]|{\boxed{2}}    & E_{n}' \ar[l] \ar[d]    \ar@{}[dr]|{\boxed{3}}     & E_{n}' \times_{Y_{n}} (Y_{m}' \cap Y_{n}) \ar[d]  \ar[l]\\
X & Y_{n} \ar[l] & Y_{m}' \cap Y_{n}\ar[l]
}
\]
As a pro-system in $n$ and $m$, the composed square $\boxed{2}+\boxed{3}$ induces a homotopy cartesian square of $K$-theory pro-spectra by  assumption $(\dagger)$. 
Similarly, for fixed $n$ the composed square $\boxed{1}+\boxed{3}$  induces a homotopy cartesian square of $K$-theory pro-spectra as pro-system in $m$.  By Lemma~\ref{ex.lem.sephoiso} this remains true as a pro-system in $m$ and $n$.
But $\boxed{1}$ induces a homotopy cartesian square of $K$-theory pro-spectra by Proposition~\ref{pr.findesc}. Hence the same is true for $\boxed{3}$ and then also for $\boxed{2}$ as desired.
\end{proof}

\smallskip

Under our assumption $(\dagger)$ we will prove the following two statements in this order.

\begin{itemize}
\item[(i)] For each $i\in \Z$ and each abstract blow-up square~\eqref{intro.eq1} the
  morphism 
\[
\prolim{n} K_i(X,Y_n) \to \prolim{n} K_i(\tilde X, E_n)
\]
is a monomorphism of pro-groups.
\item[(ii)]
For each $i\in \Z$ and each abstract blow-up square~\eqref{intro.eq1} the
  morphism 
\[
\prolim{n} K_i(X,Y_n) \to \prolim{n} K_i(\tilde X, E_n)
\]
is an isomorphism of pro-groups.
\end{itemize}

\begin{proof}[Proof of (i)]
By what is said above we can assume that there exists a blow-up  $\Bl_{Y'} X \to X$ which
factors through $\tilde X$ and such that $Y'\subseteq Y$. Then by Claim~\ref{pr.clblre} the
composition $\gamma$ of
\begin{equation}\label{pr.compeq}
\prolim{n} K_i(X,Y_n) \xrightarrow{\alpha} \prolim{n} K_i(\tilde X, E_n) \xrightarrow{\beta} \prolim{n} K_i(\Bl_{Y'}X, Y_n
\times_X \Bl_{Y'}X ) 
\end{equation}
is an isomorphism for each $i\in \Z$, so $\alpha$ is a monomorphism.
\end{proof}

\begin{proof}[Proof of (ii)]
Arguing as in (i) we consider the morphisms~\eqref{pr.compeq}. Again the composition $\gamma$  is
an isomorphism and by (i) we know that  $\beta$ is a monomorphism, so $\gamma^{-1}\circ
\beta$ is an inverse to $\alpha$.
\end{proof}

Clearly, (ii) is the same as  Theorem~\ref{intro.main.thm}. So in view of Lemma~\ref{pr.lemredaff} we have
shown: 
\begin{prop}\label{pr.redtoblow}
If the square~\eqref{pr.blowkth} from assumption $(\dagger)$ is homotopy cartesian for all
noetherian affine schemes $X$, then  Theorem~\ref{intro.main.thm} is true.
\end{prop}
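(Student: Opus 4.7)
The plan is to recognize that all the essential work has in fact been carried out in the immediately preceding paragraphs, so the proof of Proposition~\ref{pr.redtoblow} should be a brief bookkeeping argument rather than a new computation. Two things remain to be assembled: first, that the version of $(\dagger)$ for noetherian affine $X$ hypothesized in the proposition implies $(\dagger)$ for all noetherian $X$; second, that the two consequences (i) and (ii) of $(\dagger)$ extracted above together yield Theorem~\ref{intro.main.thm}.

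For the first point, I would invoke Lemma~\ref{pr.lemredaff} applied to the bi-relative $K$-theory presheaf of spectra $V \mapsto K(V, Y_n\times_X V, \tilde X\times_X V, E_n\times_X V)$ attached to the classical blow-up square in $(\dagger)$. By that lemma, Zariski-locality on $X$ together with a finite affine cover and the \v{C}ech spectral sequence (Theorem~\ref{thm:Zariski-descent}) is enough to pass from the affine to the general noetherian case; the argument is the same as in the proof of Lemma~\ref{pr.lemredaff} itself, since our $\mathcal F_n$ there depended only on the square and not on its form. Granted $(\dagger)$ in full generality, the two-step strategy already displayed applies verbatim: the Raynaud--Gruson platification (after first reducing to schematically dense open complements via Proposition~\ref{pr.findesc}) produces a closed subscheme $Y'\subseteq Y$ so that $\Bl_{Y'}X$ factors through $\tilde X$, Claim~\ref{pr.clblre} identifies the composite
\[
\prolim{n}K_i(X,Y_n) \xrightarrow{\alpha} \prolim{n}K_i(\tilde X, E_n) \xrightarrow{\beta} \prolim{n}K_i(\Bl_{Y'}X,\; Y_n\times_X\Bl_{Y'}X)
\]
as an isomorphism, step (i) gives injectivity of $\alpha$, step (ii) upgrades this (by applying (i) a second time to produce injectivity of $\beta$) to bijectivity of $\alpha$, and the equivalent formulations collected under Theorem~\ref{intro.main.thm} translate this into the desired homotopy cartesianness.

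The expected difficulty in this specific proposition is essentially nil --- it is a packaging statement. The genuine obstacles are upstream and have already been negotiated: the flattening of Raynaud--Gruson to produce $Y'$, and the simultaneous management of the two independent thickening systems $(Y_n)$ and $(Y'_m)$ via Lemma~\ref{ex.lem.sephoiso} inside the proof of Claim~\ref{pr.clblre}, combined with the finite case (Proposition~\ref{pr.findesc}) to handle the auxiliary square $\boxed{1}$ in that proof. Once these are available, Proposition~\ref{pr.redtoblow} follows formally in a few lines, and the entire proof of Theorem~\ref{intro.main.thm} is then reduced to verifying $(\dagger)$ for noetherian affine $X$ --- which is the task that will occupy the derived-blow-up and pro-excision machinery developed in Sections~\ref{sec:DerivedBlowUp} and~\ref{sec:ProExcision}.
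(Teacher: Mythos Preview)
Your proposal is correct and matches the paper's approach exactly: the paper does not even give a separate proof of Proposition~\ref{pr.redtoblow}, noting only that (ii) is the same as Theorem~\ref{intro.main.thm} and that Lemma~\ref{pr.lemredaff} reduces $(\dagger)$ to the affine case. Your observation that this is a packaging statement with all the real work (Raynaud--Gruson, Claim~\ref{pr.clblre}, steps (i) and (ii)) already done upstream is precisely the intended reading.
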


\subsection{Reduction to derived blow-ups}\label{subsec.redblowde}
As a preliminary step towards the proof of Theorem~\ref{intro.main.thm} we show in this subsection that we can reduce the proof  to the case
of derived blow-ups. In the next subsection we use a refinement  of this observation involving
a tower of derived blow-ups which will complete proof of Theorem~\ref{intro.main.thm}.

Let $A$ be a discrete noetherian ring, $X=\Spec A$, and fix a sequence $\mathbf a=(a_0,\ldots, a_r)\in A^{r+1}$. Let $\mathbf a(n)$
denote the sequence $(a_0^n, \ldots , a_r^n)$. For a definition of the following derived
schemes see Subsection~\ref{subsec:derived-blow-up-squares}.
Let   $\tilde \dX= \Bld_{\bf a} X$ be the  derived blow-up  in the sequence ${\bf a}$, let $\dY(n)=\dV(\mathbf a(n) ) $ be the
Koszul derived scheme, let $\dD(n)=\tilde \dX \times_X^{h} \dY(n) $ be the thickened derived
exceptional divisor,  and let
$\dE(n)$ be the thickened semi-derived exceptional divisor.
To simplify notation we 
denote by $Y(n)=\Spec A/(\mathbf a (n) )$, $E(n)=D(n)$ and $\tilde X$ the corresponding
underlying schemes. 

\begin{table}[b]
\centering
\renewcommand{\arraystretch}{1.3}

\begin{tabular}{| >{\centering}m{1in} |  >{\centering}m{1.4in} |
    >{\centering\arraybackslash}m{2in} |}
\hline  Symbol & Definition & Explanation \\
\hline \hline  $\tilde \dX$ & $\Bld_{\mathbf a } X $ & derived blow-up \\ 
\hline  $\dY(n)$ & $\dV (\mathbf a(n)) $ &  Koszul scheme \\ 
\hline  $\dE(n)$ & $ (Y'(n) \times_{X'} \tilde X') \times^h_{X'} X  $ & (thickened)
semi-derived 
  exceptional divisor\\ 
\hline  $\dD(n)$ & $ \dY(n) \times_X^h \tilde \dX  $ & (thickened) derived
  exceptional divisor\\ 
\hline  primed version & $-$ &  regular sequence, only
                                               $\dD'(n)$ is derived \\
\hline
\end{tabular}\smallskip

\caption{Summary of notation in Subsections \ref{subsec.redblowde} and \ref{subsec.tower}}
\label{pr.table1}
\end{table}

Recall that except for $\dD(n)$ all these derived schemes result from
a derived base change of ordinary schemes along a ring homomorphism $A'\to A$, where $A'$ is a ring endowed with a
regular sequence $\mathbf a'$ mapping to $\mathbf a$. These corresponding derived schemes over
$A'$ will be denoted with a prime; they will be used only in the proof of
Lemma~\ref{pr.lem.coclde} below. See Table~\ref{pr.table1} for a summary.
It might help the reader to revisit the diagram~\eqref{diag:helpfulsquare}.

Note that we get a commutative diagram of derived schemes 
\[
\xymatrix{
E(n) \ar[r] \ar[d] &  \dE (n) \ar[r]  \ar[d] &  \dD(n) \ar[r]  \ar[d] & \tilde \dX\phantom{.}  \ar[d]\\
Y(n) \ar[r] &  \dY(n) \ar@{=}[r]  & \dY(n)\ar[r]  & X  .
}
\]
By Remark~\ref{rmk:ordinary-and-derived-blow-up} 
there is a canonical closed immersion $\Bl_{(\mathbf a )
} X\to \tilde X$ which is an isomorphism over $X\setminus Y$. In particular from
Proposition~\ref{pr.findesc} we deduce the homotopy cartesian square
\begin{equation}\label{pr.blclde}
\begin{split}
\xymatrix{
K(\tilde X )  \ar[r]  \ar[d]& \prolim{n} K(E(n)) ) \ar[d] \\
 K(\Bl_{(\mathbf a )} X) \ar[r]  &    \prolim{n}  K(   Y(n)  \times_X  \Bl_{(\mathbf a )} X ). 
}
\end{split}
\end{equation}

In order to relate assumption $(\dagger)$ from Subsection~\ref{ex.sub.redcl} to derived schemes we consider the square
\begin{equation}\label{pr.homcarder}
\begin{split}
\xymatrix{
K(X) \ar[r] \ar[d] & \prolim{n} K(\dY(n)) \ar[d]\phantom{.} \\
K(\tilde \dX) \ar[r] & \prolim{n} K(\dE(n)).
}
\end{split}
\end{equation}

\begin{lem}\label{pr.lem.coclde}
The morphisms
\begin{align}
\prolim{n} K(\dY(n)) &\xrightarrow{\simeq} \prolim{n} K(Y(n)), \tag{i}\\
 \prolim{n} K(\tilde \dX ,\dD(n)) &\xrightarrow{\simeq} \prolim{n} K(\tilde X , D(n)),
                                    \tag{ii} \\
\prolim{n} K(\dD(n)) & \xrightarrow{\simeq} \prolim{n} K(\dE (n) ) \tag{iii}
\end{align}
are equivalences.
\end{lem}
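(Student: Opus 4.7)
My approach to the three parts rests on two pillars: Proposition~\ref{ex.prop.isokth}, which says an equivalence of pro-simplicial rings induces an equivalence of $K$-theory pro-spectra, and Lemma~\ref{ex.lemmodid}, which identifies the homotopy groups of a Koszul derived ring with honest module quotients in the pro-system. For (ii) the additional ingredient will be the pro-excision Corollary~\ref{ex.corex2}.

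For (i), the canonical truncation $\Ko(A;\mathbf a(n)) \to A/(\mathbf a(n))$ is a map of simplicial rings that is the identity on $\pi_0$. For $i > 0$, Lemma~\ref{ex.lemmodid} identifies the pro-group $\prolim{n}\pi_i\Ko(A;\mathbf a(n))$ with $\prolim{n}\pi_iA/\mathbf a(n)\pi_iA$, which vanishes since $A$ is discrete. Hence this truncation is a pro-equivalence of simplicial rings and Proposition~\ref{ex.prop.isokth} yields (i).

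For (ii), I use Zariski descent (Theorem~\ref{thm:Zariski-descent}) to reduce to affine opens $\dU = \Spec C$ of $\tilde\dX$ obtained as derived pullbacks $\dU = U' \times_{X'}^\dho X$ of the standard affine charts $U' \subset \tilde X'$ on which some $a'_j$ is distinguished. Since $\tilde\dX \setminus \dD \simeq X \setminus Y$ (the derived blow-up is an equivalence over the complement of the center, as $B'[1/a'_j] = A'[1/a'_j]$), its open subscheme $\dU \setminus (\dD \cap \dU)$ is ordinary. Corollary~\ref{ex.corex2} applied to $\dU$ with sequence $\mathbf a_C$ then gives the local pro-equivalence
\[
\prolim{n} K(\dU, \dD(n) \cap \dU) \xrightarrow{\simeq} \prolim{n} K(U, D(n) \cap U),
\]
and these local equivalences glue to the global statement via the finite \v{C}ech spectral sequence argument of Lemma~\ref{pr.lemredaff}.

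For (iii), I again reduce by Zariski descent to affine opens $\dU$ of the above kind. Using the regularity of $(a'_j)^n$ in $B' = \mathcal{O}(U')$ and a rearrangement of derived tensor products,
\[
\dE(n) \cap \dU \simeq \Spec(B'/(a'_j)^n \otimes_{A'}^L A) \simeq \Spec \Ko(C; a_j^n)
\]
is a single-element Koszul scheme, whereas $\dD(n) \cap \dU = \Spec \Ko(C; \mathbf a_C(n))$ is the full Koszul. Because $a_i = a_j s_i$ in $C$ for $i \neq j$, one has the ideal equality $(\mathbf a_C(n)) = (a_j^n)$ in $C$; so Lemma~\ref{ex.lemmodid} identifies both $\prolim{n}\pi_i\Ko(C;\mathbf a_C(n))$ and $\prolim{n}\pi_i\Ko(C;a_j^n)$ with the common pro-group $\prolim{n}\pi_i(C)/a_j^n\pi_i(C)$. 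The natural $C$-algebra map $\phi\colon\Ko(C;\mathbf a_C(n)) \to \Ko(C;a_j^n)$ induced by $\dE \to \dD$ is compatible with these identifications (both come from the structure map $\pi_iC \to \pi_i\Ko(C;-)$), so $\phi$ is a pro-equivalence of pro-simplicial rings, and Proposition~\ref{ex.prop.isokth} gives (iii). The main obstacle is precisely this local identification of $\dE(n) \cap \dU$ as a single-element Koszul scheme together with the verification that $\phi$ is compatible with the pro-isomorphisms of Lemma~\ref{ex.lemmodid}; once that is secured the remaining arguments are formal.
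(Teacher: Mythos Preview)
Your arguments for (i) and (ii) match the paper's proof. For (iii) you take a genuinely different, though correct, route. The paper argues on the regular side: on each affine chart $U'_L$ of $\tilde X'$ the map $E'(n)\cap U'_L \to \dD'(n)\cap U'_L$ corresponds to the truncation $\Ko(B'_L;\mathbf a'(n)) \to B'_L/(\mathbf a'(n))$ of a Koszul ring over the \emph{discrete} noetherian ring $B'_L$, which is a pro-equivalence by Lemma~\ref{ex.lemmodid}; then it invokes Lemma~\ref{ex.lem.basechiso} to transport this pro-equivalence along the derived base change $-\otimes^L_{A'} A$, and concludes via Proposition~\ref{ex.prop.isokth}. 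You instead work directly over the derived ring $C = B'\otimes^L_{A'} A$, identifying both $\dD(n)\cap\dU$ and $\dE(n)\cap\dU$ as Koszul schemes over $C$ (for the full sequence $\mathbf a$ and the single element $a_j$, respectively) and applying Lemma~\ref{ex.lemmodid} twice to $C$. Your approach avoids Lemma~\ref{ex.lem.basechiso} entirely, at the price of needing $C$ to be a noetherian simplicial ring; this holds (since $B'$ admits a finite free resolution over $A'[S]$, each $\pi_i C$ is a finitely generated $\pi_0 C$-module), but you should say so. The paper's approach is slightly cleaner in that it keeps the Koszul--vanishing computation over an ordinary ring and isolates the derived input in a single base-change lemma.

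Two small points to tighten. First, the \v{C}ech reduction in (ii) and (iii) requires the local statement on all finite intersections $\dU_L$, not just the basic charts $\dU_j$; your argument goes through verbatim for any $j\in L$, but you should state this. Second, in (iii) the relation $a_i = a_j s_i$ holds in $\pi_0 C$, and what you actually use is that $\mathbf a(n)\,\pi_i C = a_j^n\,\pi_i C$ as submodules of the $\pi_0 C$-module $\pi_i C$; this follows, but it is worth being explicit that the ideal equality is applied at the level of homotopy groups.
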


\begin{proof}
  (i) is a direct consequence of Proposition~\ref{ex.prop.isokth} and Lemma~\ref{ex.lemmodid}.
Let us denote by $\dU_l\subseteq \tilde \dX$ and $U'_l=D_+(a'_l)\subseteq \tilde X' $ the
standard affine open subsets for $l\in \{0,\ldots , r\}$. For $L\subseteq  \{0,\ldots , r\}$
write $\dU_L = \cap_{l\in L} \dU_l$ and $U'_L = \cap_{l\in L} U'_l$.

 In order to show (ii) use the \v{C}ech
 spectral sequence (Theorem~\ref{thm:Zariski-descent}) for the covering $(\dU_l)_l$ of $\tilde \dX$. This reduces us to show that
\[
 \prolim{n} K(\dU_L, \dD(n)\cap \dU_L) \xrightarrow{\simeq} \prolim{n} K(U_L, D(n)\cap U_L)
\] 
is an equivalence for all $L\subseteq \{0,\ldots , r\}$, which is a consequence of
Corollary~\ref{ex.corex2}.

Using the \v{C}ech
 spectral sequence again, we see that (iii) follows if we can show that
\begin{equation}\label{pr.compde}
\prolim{n} K(\dD(n)\cap \dU_L)  \xrightarrow{\simeq} \prolim{n} K(\dE (n)\cap \dU_L )
\end{equation}
is an equivalence for all $L\subseteq \{0,\ldots , r\}$. The map 
\[
\dE (n)\cap \dU_L \to
\dD(n)\cap \dU_L 
\]
is the base change along $X\to X'$ of $E'(n) \cap U'_L \to \dD' (n)\cap U'_L$. For varying
$n$ the latter corresponds to a morphism of pro-system of simplicial rings which is an
equivalence by Lemma~\ref{ex.lemmodid}. As derived base change preserves equivalences
of pro-simplicial rings by Lemma~\ref{ex.lem.basechiso}, we deduce from Proposition~\ref{ex.prop.isokth}
that~\eqref{pr.compde} is an equivalence. 
\end{proof}

In view of Lemma~\ref{pr.lem.coclde} and the cartesian square~\eqref{pr.blclde} it would be sufficient to know that the square~\eqref{pr.homcarder} is
homotopy cartesian in order to deduce assumption $(\dagger )$ for affine $X$. Unfortunately, the Thomason type descent Theorem~\ref{thm:descent-for-derived-blow-up} only says that, if
we replace in~\eqref{pr.homcarder} the pro-systems over $n$ by their values at $n=1$,   the resulting square is
homotopy cartesian.   In the next subsection we
explain a trick which allows us to reduce the problem to this case. 

\subsection{A tower of derived blow-ups}
\label{subsec.tower}

Let the notation be as in the previous subsection. 
Observe that the ideal $(\mathbf a' (2) ) $ of $A'$ is a reduction of the ideal $ (\mathbf a'
)^2$ by \cite[Prop.~8.1.5]{HS}. So by Thm.~8.2.1 there we get a finite  morphism 
\begin{equation}\label{pr.blco1}
\Bl_{(\mathbf a' )^2} X' \to \Bl_{(\mathbf a' (2)
  )} X' 
\end{equation}
 over $X'=\Spec A'$. Furthermore, there is a unique isomorphism  over $X'$
\begin{equation}\label{pr.blco2}
 \Bl_{(\mathbf a' )} X' \xrightarrow{\cong}  \Bl_{(\mathbf a'
  )^2} X'.
\end{equation}  
By derived base change of the composition of~\eqref{pr.blco2} and~\eqref{pr.blco1} along the morphism $X\to X'$ 
 we obtain an affine morphism
\begin{equation}\label{pr.findermap} 
\Bld_{\mathbf a } X \to \Bld_{\mathbf a (2)} X 
\end{equation} 
of derived schemes over $X$ whose underlying scheme morphism is finite.

Set $\tilde \dX(m)= \Bld_{\mathbf a (m)} X $, $\dD(m,n)= \dY(n) \times^h_{X} \tilde \dX(m)
$ and \[ \dE(m,n)= (\Bl_{(\mathbf a'(m))} X' \times_{X'} V(\mathbf a'(n)))\times_{X'}^h
  X\] for $n,m>0$. Iterating the construction of~\eqref{pr.findermap} we get
a tower of derived schemes over $X$ whose underlying scheme morphisms are finite
\[
\tilde \dX \to \tilde \dX(2) \to\tilde \dX(2^2) \to \tilde \dX(2^3) \to\cdots \to
\tilde\dX(2^{n}) \to \cdots.
\]
As before by an italic letter we mean the underlying scheme of a given derived scheme,
which is denoted by the corresponding calligraphic letter.

\begin{lem}\label{ex.lem.coclde2}
The morphisms
\begin{align}
 \prolim{n} K( \dY(2^n)) & \xrightarrow{\simeq} \prolim{n} K( Y(2^n) ), \tag{i} \\
  \prolim{n} K( \tilde \dX (2^n), \dD(2^n,2^n)) & \xrightarrow{\simeq}  \prolim{n} K( \tilde
                                                  X (2^n), D(2^n,2^n)), \tag{ii}\\
\prolim{n} K(\dD(2^n,2^n))  & \xrightarrow{\simeq} \prolim{n} K( \dE(2^n, 2^n)) \tag{iii}
\end{align}
are equivalences.
\end{lem}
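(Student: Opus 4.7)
The plan is to deduce this lemma from Lemma~\ref{pr.lem.coclde}, applied to each of the power sequences $\mathbf a(2^m)$, combined with Lemma~\ref{ex.lem.sephoiso} and a diagonal cofinality argument. The new subtlety compared with Lemma~\ref{pr.lem.coclde} is that the single index $n$ now simultaneously controls both the derived blow-up $\tilde\dX(2^n)$ and the Koszul thickening $\dY(2^n)$. Part (i) is immediate: the subsystem $\{\dY(2^n)\}_n$ is cofinal in $\{\dY(n)\}_n$, so the claim is a special case of Lemma~\ref{pr.lem.coclde}(i).

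For parts (ii) and (iii), I first refine the single-indexed diagonal pro-systems to double-indexed pro-systems in $(m,n) \in \N \times \N$, namely $K(\tilde\dX(2^m), \dD(2^m, 2^n))$ versus $K(\tilde X(2^m), D(2^m, 2^n))$ for (ii), and $K(\dD(2^m, 2^n))$ versus $K(\dE(2^m, 2^n))$ for (iii). For each fixed $m$, the derived blow-up $\tilde\dX(2^m) = \Bld_{\mathbf a(2^m)} X$ is the derived base change along $X \to X'$ of the ordinary blow-up at the sequence $\mathbf a'(2^m)$; this sequence is regular because powers of elements of a regular sequence in a noetherian ring again form a regular sequence, and Lemma~\ref{lem:independence} ensures that the construction is independent of this choice of auxiliary data. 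Applying Lemma~\ref{pr.lem.coclde}(ii) and (iii) with $\mathbf a$ replaced by $\mathbf a(2^m)$ then yields, for each fixed $m$, equivalences of pro-systems in a thickening index $k$ whose Koszul schemes are $\dY(2^m \cdot k)$. Since both $\{2^m \cdot k\}_{k \in \N}$ and $\{2^n\}_{n \in \N}$ are cofinal in $\N$, these upgrade to equivalences of pro-systems in $n$.

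To combine the statements over varying $m$, I invoke Lemma~\ref{ex.lem.sephoiso}, which promotes the fixed-$m$ equivalences to equivalences of the associated double-indexed pro-systems in $(m, n)$. The diagonal embedding $n \mapsto (n, n)$ is cofinal in $\N \times \N$, so the diagonal pro-systems agree with the double pro-systems, completing the proof of (ii) and (iii). The main technical obstacle will be tracking definitions carefully enough to see that the thickening indices and derived blow-up indices of Lemma~\ref{pr.lem.coclde}, when that lemma is applied to the sequence $\mathbf a(2^m)$, match via cofinality with the pro-systems appearing in the present lemma; this is straightforward but notationally delicate.
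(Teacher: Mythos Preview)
Your proposal is correct and follows essentially the same approach as the paper: deduce (i) directly from Lemma~\ref{pr.lem.coclde}(i) by cofinality, and for (ii) and (iii) apply Lemma~\ref{pr.lem.coclde} to each sequence $\mathbf a(2^m)$ separately, then use Lemma~\ref{ex.lem.sephoiso} to pass to the double pro-system in $(m,n)$, and finally restrict to the diagonal. The paper's proof is more terse---it simply asserts the fixed-$m$ equivalences (ii') and (iii') ``by Lemma~\ref{pr.lem.coclde}'' without spelling out the reindexing from $\{2^m\cdot k\}_k$ to $\{2^n\}_n$, and it phrases the last step as ``take the limit over $m$ in the category of pro-spectra''---but the content is identical to what you wrote.
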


\begin{proof}
(i) follows directly from Lemma~\ref{pr.lem.coclde}(i).
For every $m\ge 0$ the canonical morphisms
\begin{align}
  \prolim{n} K( \tilde \dX (2^m), \dD(2^m,2^n)) & \xrightarrow{\simeq}  \prolim{n} K( \tilde
                                                  X (2^m), D(2^m,2^n)), \tag{ii'} \\
 \prolim{n} K(\dD(2^m,2^n))  & \xrightarrow{\simeq} \prolim{n} K( \dE(2^m, 2^n)) \tag{iii'}
\end{align}
are equivalences by Lemma~\ref{pr.lem.coclde}.
If we take the limit over $m$ in (ii') and (iii') in the category of pro-spectra we obtain
the equivalences (ii) and (iii) in view of Lemma~\ref{ex.lem.sephoiso}.
\end{proof}

Now we are ready to prove Theorem~\ref{intro.main.thm}:

\begin{proof}[Proof of Theorem~\ref{intro.main.thm}]
By Theorem~\ref{thm:descent-for-derived-blow-up} the square
\[
\xymatrix{
K(X) \ar[r] \ar[d] &  K(\dY(2^n)) \ar[d] \\
K(\tilde \dX(2^n)) \ar[r] &  K(\dE(2^n, 2^n))
}
\]
is homotopy cartesian for all $n\ge 0$. Considering pro-systems in $n$ and using
Lemma~\ref{ex.lem.coclde2} we get  a homotopy cartesian square
\begin{equation}\label{ex.eq.sq44}
\begin{split}
\xymatrix{
K(X) \ar[r] \ar[d] & \prolim{n} K(Y(2^n)) \ar[d] \\
\prolim{n} K(\tilde X(2^n)) \ar[r] &   \prolim{n} K(E(2^n, 2^n)).
}
\end{split}
\end{equation}
By Proposition~\ref{pr.findesc} we obtain for each $m\ge 0$ an equivalence
\begin{equation}\label{ex.eq.sq45}
\prolim{n} K(\tilde X(2^m), E(2^m, 2^n)) \xrightarrow{\simeq} \prolim{n} K(\tilde X, E( 2^n)).
\end{equation}
Combining the homotopy cartesian square~\eqref{ex.eq.sq44} and the inverse limit
of the equivalence~\eqref{ex.eq.sq45} over $m$, we get the
homotopy cartesian square
\[
\xymatrix{
K(X) \ar[r] \ar[d] & \prolim{n} K(Y(2^n)) \ar[d]\phantom{.} \\
K(\tilde X) \ar[r] &   \prolim{n} K(E( 2^n)).
}
\]
Here we have also used  Lemma~\ref{ex.lem.hocopro}  and Lemma~\ref{ex.lem.sephoiso}.
This homotopy cartesian square together with the homotopy cartesian square~\eqref{pr.blclde} yields the homotopy cartesian square
\[
\xymatrix{
K(X) \ar[r] \ar[d] & \prolim{n} K(Y(2^n)) \ar[d] \\
K(\Bl_{(\mathbf a )} X) \ar[r]  & \prolim{n} K( Y(2^n)\times_X \Bl_{(\mathbf a )} X ).
}
\]
Summarizing, we have shown that assumption $(\dagger)$ from Subsection~\ref{ex.sub.redcl} holds over
any affine
base scheme $X$. So by Proposition~\ref{pr.redtoblow} we obtain Theorem~\ref{intro.main.thm}. 
\end{proof}

\section{Applications}\label{sec:application}

\subsection{Weibel's conjecture}\label{subsec:weibel}

In this subsection, we give a proof of Weibel's conjecture, i.e.\ Theorem~\ref{intro.thm.wc}.
In \cite{KerzStrunk16}, the first and the second author showed the analogous result for
homotopy $K$-theory, using Raynaud--Gruson's \emph{platification par \'eclatement} and
Cisinski's result on cdh-descent for $KH$-theory, for which we give a new proof in the
next subsection.
The proof of Theorem~\ref{intro.thm.wc} follows essentially the same argument but with Theorem~\ref{intro.main.thm} in place of cdh-descent for $KH$-theory.

We first need a reduction. With a proof copied verbatim from \cite[Prop.~3]{KerzStrunk16} we obtain:

\begin{prop}\label{prop:reduceweibeltolocal}
Let $E$ be presheaf of spectra on the category of noetherian schemes which satisfies Zariski descent.
Let $X$ be a noetherian scheme of finite Krull dimension $d$. If for every $x\in X$ the stalk $\pi_i(E)_x$ vanishes for all $i<-\dim(\sO_{X,x})$, then $E_i(X)=\pi_i(E(X))$ vanishes for all $i<-d$. 
\end{prop}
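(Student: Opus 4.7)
The plan is to run the Zariski descent spectral sequence for $E$ on $X$ and estimate the relevant cohomology using the stalk hypothesis. Since $E$ satisfies Zariski descent and $X$ is noetherian of dim $d$, one obtains a strongly convergent spectral sequence
\[
E_2^{p,q} = H^p_{\mathrm{Zar}}(X, \underline{\pi}_{-q}E) \;\Longrightarrow\; \pi_{-p-q}(E(X)),
\]
where $\underline{\pi}_{-q}E$ denotes the Zariski sheafification of $U\mapsto \pi_{-q}(E(U))$; Grothendieck's vanishing theorem cuts off the $p$-direction at $d$, ensuring convergence. To prove $\pi_i(E(X))=0$ for $i<-d$ it therefore suffices to establish $E_2^{p,q}=0$ whenever $p+q>d$.

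The stalk of $\underline{\pi}_{-q}E$ at a point $x\in X$ agrees with $\pi_{-q}(E)_x$, since sheafification preserves stalks and $\pi_{-q}$ commutes with filtered colimits. The hypothesis then forces the vanishing of this stalk as soon as $q>\dim\sO_{X,x}$, so $\underline{\pi}_{-q}E$ is supported on the locus of points of codim $\geq q$ in $X$. The crucial cohomological input is the following standard fact: an abelian sheaf on a noetherian scheme of dim $d$ whose stalks vanish outside the codim $\geq q$ locus satisfies $H^p(X,-)=0$ for $p>d-q$. One can derive this by induction on $d$ from Mayer-Vietoris for a finite affine cover combined with the vanishing of local cohomology $H^p_x(-)$ for $p$ below the codim of $x$, or equivalently from the coniveau spectral sequence. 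Feeding this bound into $E_2^{p,q}$ gives $E_2^{p,q}=0$ for $p+q>d$.

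The main obstacle is the cohomological estimate just described, which is delicate because the locus of codim $\geq q$ points is only specialization-closed; its Zariski closure need not have dim $\leq d-q$ (for instance, the closed points of $\Spec\Z$ are dense). One cannot therefore apply Grothendieck vanishing naively to the closure of the support. The argument must exploit that, although defined on all of $X$, the sheaf $\underline{\pi}_{-q}E$ has its stalks concentrated in high codim, so that a Cousin-type or coniveau filtration is needed to bound its cohomology. This is precisely how \cite[Prop.~3]{KerzStrunk16} handles the analogous statement, and the authors indicate that the proof there transports verbatim to the present setting.
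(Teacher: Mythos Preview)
Your approach via the descent spectral sequence together with the cohomological bound for sheaves supported in high codimension is correct and is the argument of \cite[Prop.~3]{KerzStrunk16}, to which the paper simply defers verbatim.

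One caveat on your justification of that bound: the vanishing ``$H^p_x(-)=0$ for $p<\codim(x)$'' fails for arbitrary abelian sheaves (a skyscraper at a closed point of $\A^1$ already has $H^0_x\neq 0$), so neither your Mayer--Vietoris/local-cohomology sketch nor a raw coniveau spectral sequence delivers the bound without further input. The clean argument is rather: every local section of such an $\mathcal F$ has closed support whose irreducible components have generic points of codimension $\geq q$ in $X$, hence dimension $\leq d-q$ by the inequality $\dim \sO_{X,x}+\dim\overline{\{x\}}\leq d$; thus $\mathcal F=\colim_Z\Gamma_Z\mathcal F$ over closed subsets $Z$ with $\dim Z\leq d-q$, and one concludes by Grothendieck vanishing on each $Z$ together with commutation of cohomology with filtered colimits on noetherian spaces.
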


We first show that for a noetherian scheme $X$ the algebraic $K$-theory vanishes below the negative of its  dimension:

\begin{proof}[Proof of Theorem~\ref{intro.thm.wc}(i)]
We argue inductively on the dimension $d$ of the noetherian scheme $X$.
In each step, we can restrict to affine noetherian schemes $X$ by the previous Proposition~\ref{prop:reduceweibeltolocal} applied to the presheaf of spectra $K$.
Since negative algebraic $K$-theory of affines is nil-invariant, we  may also assume that $X$ is reduced.

The case $d=0$ is well-known.
Let $d>0$ and assume that the statement is true for all noetherian schemes of dimension less than $d$.

Let $i<-d$ and consider an element $\xi\in K_i(X)$. By applying \cite[Prop.~5]{KerzStrunk16} to the identity $f=\id_{X}$ we find a projective birational morphism $p\colon X'\to X$ such that $p^*(\xi)=0\in K_i(X')$.
Now we choose a nowhere dense closed subscheme $Y\hookrightarrow X$ such that $p$ is an isomorphism outside $Y$ and obtain an abstract blow-up square
\[
\xymatrix{
X' \ar[d] & Y' \ar[l] \ar[d]\phantom{.} \\
X & \ar[l] Y.
}
\]
From our main Theorem \ref{intro.main.thm}, we get a long exact sequence
\[
\cdots\to \prolim{n} K_{i+1}(Y'_n) \to K_i(X)\to  \prolim{n} K_i(Y_n)\oplus K_i(X') \to \cdots 
\]
of pro-groups.
Since $\operatorname{dim}(Y')<d$ and $\operatorname{dim}(Y)<d$, the pro-groups from the sequence involving $K_{i+1}(Y'_n)$ and $K_i(Y_n)$ vanish by the induction hypothesis on $d$.
Hence $p^ *\colon K_i(X)\to K_i(X')$ is injective and consequently $\xi=0$.
\end{proof}

We next show  that a noetherian scheme of dimension $d$ is $K_{-d}$-regular:

\begin{proof}[Proof of Theorem~\ref{intro.thm.wc}(ii)]
Again, we argue inductively on the dimension $d$ of the noetherian scheme $X$.
In every step of the induction, we deal with each $r\geq 0$ separately and show that $f^*\colon K_i(X)\to K_i(\mathbb{A}^r_X)$ is an isomorphism for $i\leq d$.
Since the projection $f\colon \mathbb{A}^r_X\to X$ has a section, the induced morphism $f^*$ is always a monomorphism.
In fact, we even have a decomposition $K(\mathbb{A}^r_X)\simeq K(X)\oplus N^{(r)}K(X)$ of spectra.
By applying Proposition~\ref{prop:reduceweibeltolocal} to the presheaf $N^{(r)}K[1]$ of spectra we may assume in each step that $X$ is noetherian and affine.
As non-positive algebraic $K$-theory of affines is nil-invariant, we can also take $X$ to be reduced.

It remains to show surjectivity of $f^*$.
The case $d=0$ is well-known \cite[Thm.~II.7.8]{Weib13}.
Let $d>0$ and assume the statement for all noetherian schemes of dimension less than $d$.

Let $i\leq -d$, $r\geq 0$ and consider an element $\xi\in K_i(\mathbb{A}^r_X)$.
We show that $\xi$ lies in the image of $f^*$. 
By applying \cite[Prop.~5]{KerzStrunk16} to $f$, we find a projective birational morphism $p\colon X'\to X$ such that $\tilde p^*(\xi)=0\in K_i(\mathbb{A}^r_{X'})$.
We choose again a nowhere dense closed subscheme $Y\hookrightarrow X$ such that $p$ is an isomorphism outside $Y$.
Theorem \ref{intro.main.thm} shows that the horizontal sequences of the diagram of pro-groups
\[
\xymatrix@C=4ex{
 \prolim{n} K_{i+1}(Y'_n)\ar[r]\ar[d]^{f^{*}_{Y'}} & K_i(X) \ar[r]\ar[d]^{f^*}&  \prolim{n} K_i(Y_n)\oplus K_i(X')  \ar[d]^{f^{*}_{Y}\oplus f^{*}_{X'}}\\
 \prolim{n} K_{i+1}(\mathbb{A}^r_{Y'_n})\ar[r] & K_i(\mathbb{A}^r_{X}) \ar[r]&  \prolim{n} K_i(\mathbb{A}^r_{Y_n})\oplus K_i(\mathbb{A}^r_{X'})&
}
\]
are exact.
Since $\operatorname{dim}(Y')<d$ and $\operatorname{dim}(Y)<d$, the vertical maps $f^{*}_{Y'}$ and $f^{*}_{Y}$ are isomorphisms by the induction hypothesis.
The pro-group in the upper horizontal sequence involving $K_i(Y_n)$ vanishes by part {\rm(i)} of Theorem~\ref{intro.thm.wc}.
Now a simple diagram chase shows that $\xi$ lies in the image of $f^*$.
\end{proof}

\subsection{Cdh-descent for homotopy \texorpdfstring{$K$}{K}-theory}\label{subsec:hok}

In this subsection we prove Theorem~\ref{intro.thm.kh}.
Let $X$ be a noetherian scheme of dimension $d<\infty$ and consider an abstract blow-up square~\eqref{intro.eq1}.
We study the bi\-relative spectra of $KH$-theory and $K$-theory of the square:
\begin{align*}
FH(n) &= KH(X,Y_n, \tilde X, E_n)\\
F(n)_\bullet &= K(X\times \Delta^\bullet,Y_n\times \Delta^\bullet , \tilde X \times \Delta^\bullet, E_n\times \Delta^\bullet).
\end{align*}
By definition \cite[IV.12]{Weib13} we have
\[
FH(n) = \colim_{\bullet\in\Delta^{op}}  F(n)_\bullet
\]
in the $\infty$-category of spectra. 

\begin{lem}\label{app.lemvan}\mbox{}
\begin{itemize}
\item[(i)]
 $FH(n+1)\xrightarrow{\simeq} FH(n)$ is an
equivalence for all $n>0$. 
\item[(ii)] 
For all $i<-d-2$, $n>0$ and $j\ge 0$ we have
$\pi_{i} (F(n)_j )= 0$.
\end{itemize}
\end{lem}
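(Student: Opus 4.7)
The plan is to handle the two parts separately. For (i), I would invoke nil-invariance of $KH$-theory, and for (ii) I would iterate the birelative fibre sequences and feed in the just-proved Weibel conjecture (Theorem~\ref{intro.thm.wc}).

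For part (i), the closed immersions $Y_n \hookrightarrow Y_{n+1}$ and $E_n \hookrightarrow E_{n+1}$ are defined by ideals of square zero and in particular are nilpotent thickenings. By nil-invariance of $KH$ for noetherian schemes (a standard consequence of $\mathbb{A}^1$-invariance, cf.\ \cite[IV.12]{Weib13}), the restriction maps $KH(Y_{n+1}) \to KH(Y_n)$ and $KH(E_{n+1}) \to KH(E_n)$ are equivalences. Since $FH(n)$ is the iterated fibre of the square assembled from $KH(X)$, $KH(Y_n)$, $KH(\tilde X)$, $KH(E_n)$, and the first and third of these are unchanged under $n \mapsto n+1$, the induced map $FH(n+1) \to FH(n)$ is an equivalence.

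For part (ii), $F(n)_j$ fits into the fibre sequence
\[
F(n)_j \to K(X \times \Delta^j, Y_n \times \Delta^j) \to K(\tilde X \times \Delta^j, E_n \times \Delta^j),
\]
and each relative spectrum on the right fits into a further fibre sequence of the form $K(W \times \Delta^j, Z_n \times \Delta^j) \to K(W \times \Delta^j) \to K(Z_n \times \Delta^j)$ for $(W,Z) \in \{(X, Y), (\tilde X, E)\}$. Using $\Delta^j \cong \mathbb{A}^j$, Theorem~\ref{intro.thm.wc}(ii) gives $K_i(W \times \Delta^j) \cong K_i(W)$ for $i \leq -\dim W$, and Theorem~\ref{intro.thm.wc}(i) gives $K_i(W) = 0$ for $i < -\dim W$. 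Combining, $K_i(W \times \Delta^j) = 0$ for $i < -\dim W$, uniformly in $j$. Since each of $X, Y_n, \tilde X, E_n$ has Krull dimension at most $d$, the two relative $K$-spectra vanish below $-d-1$, and a final long exact sequence chase shows $\pi_i F(n)_j = 0$ for $i < -d-2$.

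The only real point of friction I anticipate is the dimension bound $\dim \tilde X \leq d$. This is automatic once one has reduced to the case where $X \setminus Y$ is schematically dense in $X$ and in $\tilde X$ (the usual reduction made in Subsection~\ref{ex.sub.redcl}), so that $\tilde X$ is birational to $X$. In the complementary degenerate case $Y = X$, the four schemes in the abstract blow-up square collapse so that $F(n)_j$ and $FH(n)$ vanish identically, and both statements hold trivially.
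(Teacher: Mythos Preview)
Your proposal is correct and follows essentially the same approach as the paper. For (i) both invoke nil-invariance of $KH$; for (ii) both unfold the birelative fibre into absolute $K$-groups and apply Theorem~\ref{intro.thm.wc}. The only cosmetic difference is the choice of decomposition: the paper writes $F(n)_j$ as the fibre of $F'_j \to F''(n)_j$ with $F'_j = \hofib(K(X\times\Delta^j)\to K(\tilde X\times\Delta^j))$ and $F''(n)_j = \hofib(K(Y_n\times\Delta^j)\to K(E_n\times\Delta^j))$, whereas you slice the square the other way via $K(X\times\Delta^j,Y_n\times\Delta^j)\to K(\tilde X\times\Delta^j,E_n\times\Delta^j)$; these are the two standard presentations of a total fibre and give the same bounds. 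You are also more explicit than the paper in using part~(ii) of Theorem~\ref{intro.thm.wc} to kill the $\Delta^j$-dependence before applying part~(i), and in flagging the hypothesis $\dim\tilde X\le d$ (which the paper uses tacitly); for the application to Theorem~\ref{intro.thm.kh} one only needs \emph{some} uniform lower bound, and $\tilde X$ is in any case finite-dimensional as it is of finite type over $X$.
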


\begin{proof}
The assertion (i) follows directly from nil-invariance of $KH$-theory \cite[Cor.~IV.12.5]{Weib13}. For (ii) we use the fibration sequence
\begin{equation}\label{app.eq1}
F(n)_j \to  F'_j  \to F''(n)_j
\end{equation}
with 
\begin{align*}
F'_j &= \hofib( K(X\times \Delta^j) \to K(\tilde X\times \Delta^j)), \\
F''(n)_j &=  \hofib( K(Y_n\times \Delta^j) \to K(E_n\times \Delta^j) ).
\end{align*}
From the  exact homotopy sequences
\begin{align*}
&K_{i+1}(\tilde X\times \Delta^j) \to  \pi_i (F'_j) \to K_i(X\times \Delta^j), \\
&K_{i+2}(E_n\times \Delta^j) \to   \pi_{i+1} (F''(n)_j) \to K_{i+1}  K(Y_n\times \Delta^j) ,
\end{align*}
and Theorem~\ref{intro.thm.wc} we deduce that $\pi_i (F'_j)= \pi_{i+1} (F''(n)_j) =0$. The
long exact homotopy sequence corresponding to~\eqref{app.eq1} implies that  $\pi_{i} (F(n)_j )= 0$.
\end{proof}

\begin{proof}[Proof of Theorem \ref{intro.thm.kh}]
In order to prove Theorem~\ref{intro.thm.kh} we have to show that
\[
KH(X,Y, \tilde X, E)=FH(1)
\]
is contractible.
In view of Lemma~\ref{app.lemvan} the spectral sequence of the homotopy colimit 
\begin{equation}\label{app.eq22}
E^{2}_{pq}(n) = \pi_p ( \pi_q (F(n)_\bullet) ) \Rightarrow \pi_{p+q} KH(X,Y, \tilde X, E)
\end{equation}
 is convergent  (in fact uniformly convergent in $n$).

From Theorem~\ref{intro.main.thm} we know that 
\[
\prolim{n}  F(n)_j \simeq *
\]
for all $j\ge 0$. Considering the pro-system in $n$ of the spectral
sequence~\eqref{app.eq22} we deduce from
\[
\prolim{n} E^{2}_{pq}(n)  = 0 \quad \text{ for all } p,q\in \Z
\]
that the right-hand side of~\eqref{app.eq22} vanishes.
\end{proof}

\subsection{Identification with cdh-cohomology}\label{subsec.cdh}

In this subsection we prove Corollary~\ref{intro.cor}.
For a noetherian scheme $X$ of finite dimension let ${\rm Sch}_X$ be the category of schemes which are
separated and of finite type over $X$.
Let 
\[ \rL_\cdh\colon \mathrm{PSh}_\Sp({\rm Sch}_X) \to \mathrm{Sh}_\Sp({\rm Sch}_X^\cdh)\] be the
cdh-sheafification functor from the $\infty$-category of presheaves of spectra to the
$\infty$-category of sheaves of spectra. The functor $\rL_\cdh$ is left adjoint to the
inclusion functor of presheaves into sheaves, it preserves finite limits and small
colimits. Note that Theorem~\ref{intro.thm.kh} implies that $KH$ is an object of
$ {\rm Sh}_\Sp({\rm Sch}_X^\cdh)$. Let us denote by $\tilde K$
the connective  $K$-theory.

\begin{thm}\label{subsec.cdh.thm}
For a finite
dimensional noetherian scheme $X$ the canonical maps 
\[\rL_\cdh \tilde K \xrightarrow{\simeq} \rL_\cdh K \xrightarrow{\simeq} KH
\]
are equivalences of sheaves of spectra on ${\rm Sch}_X^\cdh$.
\end{thm}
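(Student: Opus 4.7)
The plan is to prove the two equivalences separately. I would first establish $\rL_\cdh K \xrightarrow{\simeq} KH$, and then deduce $\rL_\cdh \tilde K \xrightarrow{\simeq} \rL_\cdh K$ as a consequence of the Bass fundamental theorem.

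\emph{For the second equivalence.} By Theorem~\ref{intro.thm.kh}, $KH$ is a cdh-sheaf of spectra, so the canonical presheaf map $K \to KH$ factors uniquely as $K \to \rL_\cdh K \to KH$. Since $\rL_\cdh$ is a left adjoint and $KH(-) = \colim_{[n]\in\Delta^{op}} K(-\times\Delta^n)$, we obtain $KH \simeq \rL_\cdh KH \simeq \colim_n \rL_\cdh K(-\times \Delta^n)$, so it is enough to show that $\rL_\cdh K$ is $\A^1$-invariant. To prove this $\A^1$-invariance I would use that the cdh-topology on finite dimensional noetherian schemes has finite cohomological dimension (bounded by $\dim$), giving a convergent hyperdescent spectral sequence $E_2^{pq}(Y) = H^p_\cdh(Y, \ra_\cdh K_q) \Rightarrow \pi_{q-p}(\rL_\cdh K)(Y)$. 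Comparing this for $Y$ and $\A^1_Y$ reduces the question to the $\A^1$-invariance of the cdh-sheaves $\ra_\cdh K_q$, equivalently the vanishing $\rL_\cdh N^{(r)}K \simeq 0$ where $N^{(r)}K$ is the fibre of $K \to K(\A^r\times -)$. This vanishing would follow by essentially the same induction on dimension as the proof of Theorem~\ref{intro.thm.wc}(ii), with Theorem~\ref{intro.main.thm} again supplying the abstract blow-up Mayer--Vietoris input in place of resolution of singularities.

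\emph{For the first equivalence.} The cofibre $C = K/\tilde K$ of presheaves of spectra has homotopy presheaves $\pi_i C = K_i$ for $i \leq -1$ and $\pi_i C = 0$ otherwise; by Theorem~\ref{intro.thm.wc}(i) these are additionally zero for $i < -\dim$, so $C$ has only finitely many nonzero homotopy presheaves, concentrated in a bounded range of negative degrees. It suffices to show $\rL_\cdh C \simeq 0$. The Bass fundamental theorem (Theorem~\ref{thm:Bass-fundamental}) together with the Zariski Mayer--Vietoris square for the standard cover of $\P^1$ exhibits each negative $K$-group $K_{-n}(X)$ as an iterated Bass delooping of connective $K$-theory applied to $\P^1$-covers of $X$. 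Since the Zariski topology is coarser than the cdh-topology, these Mayer--Vietoris squares are already $\rL_\cdh$-cartesian, so the negative homotopy sheaves of $\rL_\cdh K$ are recoverable from the connective information of $\rL_\cdh \tilde K$ via the same delooping, forcing $\rL_\cdh C \simeq 0$. Equivalently, one compares the two hyperdescent spectral sequences, noting that the contributions $\ra_\cdh K_q$ for $q < 0$ to the $E_2$-page of $\rL_\cdh K$ are already encoded as higher cdh-cohomology of $\ra_\cdh K_0$ in the $E_2$-page of $\rL_\cdh \tilde K$ (this is the mechanism underlying Corollary~\ref{intro.cor}).

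\emph{Main obstacle.} The hard part I anticipate is establishing the $\A^1$-invariance of $\rL_\cdh K$ without any recourse to resolution of singularities. This is precisely the step that makes Theorem~\ref{intro.main.thm} indispensable to the argument: it replaces resolution-based descent techniques (as used in \cite{Haes}, \cite{CHSW}) by the pro-descent statement for abstract blow-up squares, and it is the conceptual heart of the proof.
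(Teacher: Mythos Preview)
Your reduction of the second equivalence to the vanishing $\ra_\cdh NK_i = 0$ is correct, but your proposed proof of that vanishing has a genuine gap. You claim it follows by ``essentially the same induction on dimension as the proof of Theorem~\ref{intro.thm.wc}(ii)''. That argument, however, only kills classes in $K_i(\A^r_X)$ for $i \leq -\dim X$: the key input there is \cite[Prop.~5]{KerzStrunk16}, whose proof relies on the vanishing of sufficiently negative $K$-groups on a Zariski--Riemann type limit. For $i \geq 0$ (and in particular for $NK_i$ with $i$ large) no such vanishing is available, so you cannot produce the required blow-up killing a given $\xi \in NK_i(Y)$ this way. The paper fills this gap with a separate, substantial result (Proposition~\ref{subsec.cdh.prop2}): for \emph{any} $i \in \Z$ and any $\xi \in NK_i(Y)$ there is a projective birational $p\colon X' \to X$ with $\tilde p^*\xi = 0$. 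Its proof uses d\'evissage and the resolution theorem for the $K$-theory of nilpotent-endomorphism categories over a Zariski--Riemann space, together with platification par \'eclatement. This is not a technicality you can absorb into the Weibel-conjecture induction; it is the genuine new ingredient.

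Your argument for the first equivalence via Bass delooping is also unclear as stated. The Bass sequence presents $K_{i-1}$ as a cokernel built from $K_i$ of related schemes, but applying $\ra_\cdh$ to such a presentation does not automatically force the result to vanish, and it is not obvious that $\rL_\cdh\tilde K$ inherits enough structure to be its own Bass delooping. The paper's route is more direct and is in fact the place where the Weibel-conjecture-style induction \emph{does} apply: one shows $\ra_\cdh K_i = 0$ for $i<0$ by induction on dimension, using \cite[Prop.~5]{KerzStrunk16} to kill a given $\xi \in K_i(Y)$ on a blow-up and the induction hypothesis on the center. In short, you have swapped the roles of the two equivalences: the ``Theorem~\ref{intro.thm.wc}-style'' argument handles $\rL_\cdh\tilde K \simeq \rL_\cdh K$, while $\rL_\cdh K \simeq KH$ needs the additional Zariski--Riemann input you are missing.
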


In~\cite{Haes} Haesemeyer shows Theorem~\ref{subsec.cdh.thm} for varieties in
characteristic zero.
In particular, this theorem implies that there is a convergent spectral
sequence
\eq{subsec.cdh.eqspec}{
E^{p q}_2 = H^p_\cdh (X, \ra_\cdh \tilde K_{-q} ) \Rightarrow KH_{-p-q}(X)
}
where $\ra_\cdh$ denotes the sheafification functor on abelian sheaves.
Corollary~\ref{intro.cor} is a consequence of the spectral
sequence~\eqref{subsec.cdh.eqspec} in view of the following facts:
\begin{itemize}
\item[(i)]
$E^{p q}_2 = 0 $ for $p>d=\dim(X)$ by~\cite[Sec.~12]{SV},
\item[(ii)]  $\ra_\cdh \tilde K_0\cong\Z$ as this is already true for the Zariski topology,
\item[(iii)] $K_{-d}(X)\cong KH_{-d}(X) $ by Theorem~\ref{intro.thm.wc} and the spectral
  sequence relating $K$ and $KH$~\cite[Thm.~IV.12.3]{Weib13}.
\end{itemize}

In the proof of Theorem~\ref{subsec.cdh.thm} we need the following proposition, which is
well-known in case $X$ can be desingularized, see~\cite[Thm.~V.6.3]{Weib13}. The analog of
Proposition~\ref{subsec.cdh.prop2} for negative $K$-theory was shown
in~\cite[Prop.~5]{KerzStrunk16}. Our approach is to use devissage for $K$-theory of a
certain {\em Zariski--Riemann space} and {\em platification par \'eclatement} instead of
resolution of singularities.
Recall that for a scheme $X$ and an integer $i$ the group $NK_{i}(X)$ is defined to be the cokernel of the split injection $K_{i}(X) \to K_{i}(\A^{1}_{X})$.
\begin{prop}\label{subsec.cdh.prop2}
Let $X$ be a reduced scheme which is quasi-projective over a noetherian ring. Let $f\colon Y\to
X$ be a smooth and quasi-projective morphism. For any $\xi\in NK_i(Y)$, $i\in \Z$, there exists a projective birational morphism $p\colon X'\to X$ such that $\tilde
p^*(\xi)=0 \in NK_i(Y')$, where $ \tilde p\colon Y'\to Y$ is the base change of $p$.
\end{prop}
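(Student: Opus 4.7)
The plan is to mimic the argument of \cite[Prop.~5]{KerzStrunk16}, where the analogous statement is established for negative $K$-theory in place of $NK$-theory: pass to a Zariski--Riemann-type cofiltered limit over all projective birational modifications of $X$, prove the vanishing of $NK_i$ there, and then exploit continuity of algebraic $K$-theory to descend the vanishing to some finite modification.

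Concretely, I would introduce the limit
\[
\mathfrak{X} = \lim_{X' \to X} X',
\]
taken over reduced projective birational morphisms $X' \to X$ (a cofiltered diagram), together with $\mathfrak{Y} = Y \times_X \mathfrak{X} = \lim_{X' \to X} Y'$. Both towers have affine transition maps, so Thomason--Trobaugh continuity of algebraic $K$-theory, applied both to $K$ and to $K(\A^1_{-})$, yields
\[
NK_i(\mathfrak{Y}) \;\cong\; \colim_{X' \to X} NK_i(Y').
\]
Consequently, if $\xi$ has vanishing image in $NK_i(\mathfrak{Y})$, then there exists a projective birational $p\colon X' \to X$ with $\tilde p^*(\xi) = 0$ in $NK_i(Y')$. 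This reduces the problem to showing $NK_i(\mathfrak{Y}) = 0$.

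For this vanishing, I would use Zariski descent to localize and then analyze the stalks. By Temkin's theorem on relative Riemann--Zariski spaces, the local rings of $\mathfrak{X}$ are valuation rings dominating local rings of $X$; since $Y \to X$ is smooth, the stalks of $\mathfrak{Y}$ are smooth algebras of finite presentation over valuation rings. A classical approach would invoke resolution of singularities at this point; the strategy indicated in the excerpt is to replace this by platification par \'eclatement, which after a further admissible blow-up of the valuation-ring base allows a reduction to a noetherian regular situation, where Quillen's fundamental theorem supplies $NK_i = 0$. A devissage argument then reassembles these local vanishings into the global vanishing $NK_i(\mathfrak{Y}) = 0$.

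The main obstacle is this last step: controlling $NK_i$ for smooth algebras over non-noetherian valuation rings, and arranging the blow-ups produced by platification so that they lie within, or can be absorbed by, the cofiltered diagram defining $\mathfrak{X}$. This is precisely the point at which platification par \'eclatement serves as a substitute for resolution of singularities in the Kerz--Strunk framework.
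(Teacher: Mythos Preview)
Your overall architecture---pass to the Zariski--Riemann limit over projective birational modifications of $X$, establish continuity, and prove vanishing of $NK$ at the limit---matches the paper's. But there is a factual error and a genuine gap in the execution.

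First, the transition maps in the tower of blow-ups are projective, not affine; your appeal to Thomason--Trobaugh continuity for inverse limits with affine bonding maps does not apply. The paper circumvents this by never treating $\mathfrak{Y}$ as a scheme: it works with the locally ringed space $\mathfrak{Y}=\lim Y'$ and uses that $K$-theory commutes with filtered colimits of \emph{exact categories}, applied to $\Vect(\mathfrak{Y})\simeq\colim\Vect(Y')$ and $\Nil(\mathfrak{Y})\simeq\colim\Nil(Y')$. Combined with the identification $NK_{i+1}(Y')\cong\coker\bigl(K_i(\Vect(Y'))\to K_i(\Nil(Y'))\bigr)$, this gives the continuity statement you want.

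Second, and more seriously, your plan for the vanishing at the limit---localize to stalks, invoke Temkin to get valuation rings, then somehow reduce to a noetherian regular situation---is not how the paper proceeds, and you correctly flag it as incomplete. The paper never reduces to a regular situation and never works with valuation rings. Instead it shows directly that $K_i(\Vect(\mathfrak{Y}))\to K_i(\Nil(\mathfrak{Y}))$ is an isomorphism by a resolution/devissage argument: platification par \'eclatement (packaged as Lemma~\ref{subsec.cdh.lemplat}) is used to prove that $\sO_{\mathfrak{Y}}$ is coherent and that every object of $\Nil^+(\mathfrak{Y})$ admits a finite resolution by objects of $\Nil(\mathfrak{Y})$. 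Quillen's resolution theorem then gives $K_i(\Vect(\mathfrak{Y}))\cong K_i(\Coh(\mathfrak{Y}))$ and $K_i(\Nil(\mathfrak{Y}))\cong K_i(\Nil^+(\mathfrak{Y}))$, and devissage (filtering a nilpotent endomorphism by powers) gives $K_i(\Coh(\mathfrak{Y}))\cong K_i(\Nil^+(\mathfrak{Y}))$. The role of platification is thus quite specific: it furnishes the finite locally-free resolutions needed for the resolution theorem, not a passage to regular rings.
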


Before we begin the proof we introduce some notation. Let $\frY$ be a locally
ringed space with coherent structure sheaf, see~\cite[Sec.\ 0.5.3]{EGA} and~\cite[Sec.~0.4.1]{FK}.
We denote by $\Coh(\frY)$ the abelian category of $\sO_\frY$-modules which are of finite
presentation. By $\Vect(\frY)$ we denote the full exact subcategory of $\Coh(\frY)$ with objects
the locally free sheaves.

  We denote by $\Nil^+(\frY)$ the abelian category with objects
$(\sV,\nu)$ where
\begin{itemize}
\item $\sV$ is an $\sO_\frY$-module  of finite presentation, 
\item $\nu\colon\sV\to
\sV$ is an $\sO_\frY$-linear morphism such that $\nu^k=0$ for some $k>0$. 
\end{itemize}
The morphisms in $\Nil^+(\frY)$
are the $\sO_\frY$-linear maps compatible with the $\nu$-action.
By $\Nil(\frY)$ we denote the full exact subcategory of $\Nil^+(\frY)$ of objects
$(\sV,\nu)$ such that $\sV$ is locally free. 
\begin{lem}\label{subsec.cdh.lemplat}
Consider $f\colon Y\to X$ as in Proposition~\ref{subsec.cdh.prop2} and $\sU,\sV\in \Coh(Y) $ respectively
$\mathbb V=(\sV,\nu)\in \Nil^+(Y) $.  Then with the notation used there the following assertions are true.
\begin{itemize}
\item[(i)] If $\sV$ has Tor-dimension $\le 1$ over $X$ and $p\colon X'\to X$ is projective and
  birational with $X'$ reduced then $\tilde p^*(\sV)\cong L\tilde p^*(\sV)$. In particular on
  the full subcategory of $\Coh(Y)$ of sheaves of Tor-dimension $\le 1$ over $X$ the
  pullback $\tilde p^*$ is exact.
\item[(ii)]
Let $\phi\colon \sU\to \sV$ be a morphism and assume that $\sU$, $\sV$ and $\coker
(\phi)$ have Tor-dimension $\le 1 $ over $X$. Let $p\colon X'\to X$ be a projective and
  birational morphism with $X'$ reduced. Then the canonical maps 
\[
\tilde  p^*(\ker(\phi))\xrightarrow{\cong} \ker (\tilde p^*(\phi)) \quad\text{and}\quad
\tilde p^*(\im(\phi))\xrightarrow{\cong} \im (\tilde p^*(\phi))
\]
are isomorphisms.
\item[(iii)]
 There exists a projective birational morphism $p\colon X'\to X$ with $X'$ reduced such that the base change
$\tilde p^*(\sV)$ has Tor-dimension $\le 1$ over $X'$ and such that
there exists  a finite resolution
\[
0\to \mathbb V_l \to \cdots \to \mathbb V_0 \to \tilde p^* \mathbb  V \to 0 
\]
with $\mathbb V_i\in \Nil(Y')$ for all $0\le i\le l$.
\item[(iv)] Let $\phi\colon \sU\to \sV$ be a morphism in $\Coh(Y)$. There exists a projective
  birational morphism $p\colon X'\to X$ with $X'$ reduced such that  $\tilde p^* \sU $, $\tilde p^* \sV $,  $ \ker(\tilde p^*(\phi))$,
  $ \im(\tilde p^*(\phi))$ and  $ \coker(\tilde p^*(\phi))$ have Tor-dimension $\le 1$
  over $X'$.
\end{itemize}
\end{lem}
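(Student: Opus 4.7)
The lemma has four parts which split into two groups: (iii) and (iv) are existence statements for $p$ which rest on Raynaud--Gruson's \emph{platification par \'eclatement}, while (i) and (ii) are essentially formal consequences of the Tor-dimension hypothesis combined with the flatness of the smooth morphism $f$. I plan to prove them in the order (iv), (i), (ii), (iii).

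For (iv), I would apply Raynaud--Gruson's platification theorem \cite[Sec.~5]{GR} to each of the five coherent sheaves $\sU$, $\sV$, $\ker\phi$, $\im\phi$, $\coker\phi$ on $Y$, viewed as sheaves over $X$ via $f$; each yields a projective birational blow-up of $X$ making the corresponding sheaf flat over its base after pullback. A common dominating refinement, obtained by taking the closure of the diagonal in the fibre product of the individual $X_i'$, produces a single $p\colon X'\to X$, still projective and birational, over which all five sheaves simultaneously become $\sO_{X'}$-flat after pulling back to $Y'$. Replacing $X'$ by $X'_{\rm red}$ preserves both projectivity and birationality, and flat sheaves have Tor-dimension $0\le 1$.

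For (i), I would work locally on $Y$. Choose a local flat resolution $0\to F_1\to F_0\to \sV\to 0$ of $\sV$ by $\sO_X$-flat sheaves (available by Tor-dim $\le 1$); since $f$ is flat, this remains a flat resolution when regarded over $\sO_Y$, and the complex $F_\bullet\otimes_{\sO_X}\sO_{X'}$ computes $L\tilde p^*\sV$. The higher Tor's vanish by the Tor-dim hypothesis, so the only obstruction is $\Tor_1^{\sO_X}(\sV, \sO_{X'})$, which sits as a subsheaf of $F_1\otimes\sO_{X'}$ supported on the non-isomorphism locus of $p$; a torsion-freeness argument using birationality of $p$ together with reducedness of $X'$ forces its vanishing. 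The exactness of $\tilde p^*$ on short exact sequences whose three terms are all of Tor-dim $\le 1$ is then immediate. Part (ii) follows formally: the long exact Tor-sequences associated to $0\to \ker\phi\to \sU\to \im\phi\to 0$ and $0\to \im\phi\to \sV\to \coker\phi\to 0$ upgrade Tor-dim $\le 1$ from $\{\sU,\sV,\coker\phi\}$ to $\{\ker\phi,\im\phi\}$, after which (i) applied to these two sequences yields the claimed isomorphisms.

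For (iii), first apply (iv) to the sheaves $\sV$, $\ker\nu$, $\im\nu$ and $\coker\nu$ to arrange that all their pullbacks to $Y'$ have Tor-dim $\le 1$ over $X'$. Then construct a finite resolution of $\tilde p^*\mathbb V$ in $\Nil(Y')$ inductively: at each stage, choose a locally free pair $\mathbb V_j=(\sE_j,\mu_j)$ with a surjection onto the current syzygy (for an object $(\sW,\mu)$ with $\mu^k=0$ one can always produce such a pair by taking a locally free surjection $\sE\twoheadrightarrow \sW$ and lifting to $\sE\otimes\sO_{Y'}[t]/(t^k)$ with multiplication by $t$), then compute the next syzygy, using (ii) to control its pullback behaviour. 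The main obstacle is termination at finite length, which follows from the Tor-dim $\le 1$ assumption combined with the fact that $\tilde f$ is smooth, so the syzygies are eventually locally free by the standard finite-projective-dimension argument.
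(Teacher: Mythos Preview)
Your treatment of (i) and (ii) is essentially the same as the paper's and is fine.

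The real problem is in (iv). Raynaud--Gruson's \emph{platification par \'eclatement} does not assert that after a suitable blow-up $p\colon X'\to X$ the \emph{pullback} $\tilde p^{*}\sF$ becomes flat; it asserts that the \emph{strict transform} of $\sF$ becomes flat. These differ in general, and the conclusion of (iv) concerns the pullbacks $\tilde p^{*}\sU,\tilde p^{*}\sV$ and the kernel, image and cokernel of the \emph{pulled-back} map $\tilde p^{*}\phi$ (which are not $\tilde p^{*}(\ker\phi)$, $\tilde p^{*}(\im\phi)$ a priori). So applying platification directly to the five sheaves on $Y$ does not give what you claim. The paper's mechanism is different: choose a presentation $\sF_{1}\xrightarrow{\psi}\sF_{0}\to \sF\to 0$ by locally free sheaves and observe that, since $\sF_{0}$ is locally free, $\im(\tilde p^{*}\psi)$ is the strict transform of $\im\psi$. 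Platification then makes $\im(\tilde p^{*}\psi)$ flat over $X'$, whence the two-step resolution $0\to \im(\tilde p^{*}\psi)\to \tilde p^{*}\sF_{0}\to \tilde p^{*}\sF\to 0$ shows $\tilde p^{*}\sF$ has Tor-dimension $\le 1$. One applies this separately (and takes a common refinement) to obtain (iv); the statements about $\ker(\tilde p^{*}\phi)$ and $\im(\tilde p^{*}\phi)$ then follow from the Tor long exact sequence as in your (ii), since $\coker(\tilde p^{*}\phi)=\tilde p^{*}(\coker\phi)$ by right exactness.

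Your argument for (iii) inherits this gap through its appeal to (iv). Once that is repaired, your direct construction of a resolution in $\Nil(Y')$ by iterated surjections from objects of the form $(\sE\otimes\sO_{Y'}[t]/(t^{k}),t)$ is a legitimate alternative to the paper's induction on the nilpotence degree $k$. Indeed, since $\tilde p^{*}\sV$ has Tor-dimension $\le 1$ over $X'$, every syzygy in your process is automatically $X'$-flat, so no further blow-ups are required; termination is exactly the finite Tor-dimension over $Y'$ that the paper also invokes via \cite[Lem.~6]{KerzStrunk16}. The paper instead filters $\tilde p^{*}\mathbb V$ by $\ker(\nu')^{k-1}$ and $\im(\nu')^{k-1}$, handles the first piece by induction on $k$, resolves the second piece (on which $\nu'$ acts trivially) as a sheaf, and splices. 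Your route avoids the inductive bookkeeping at the cost of larger free objects; both are valid once (iv) is established correctly.
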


\begin{proof} 
For (i) choose an exact sequence of $\sO_Y$-modules
\[
0\to \sV_1\xrightarrow{\phi} \sV_0 \to \sV \to 0
\]
with $\sV_1$ and $\sV_0$ flat over $X$. Then 
\[
0\to\tilde p^* \sV_1\xrightarrow{\phi}\tilde p^*  \sV_0 \to\tilde p^*  \sV \to 0
\]
is exact, since it is so over the maximal points of $X'$.
(ii) is a direct consequence of (i).

For (iii) we argue by induction on $k>0$ with $\nu^k=0$ for all $Y\to X$ at once.
Choose a locally free presentation of $\sO_Y$-modules
\[
\sV_1\xrightarrow{\phi} \sV_0 \to \sV \to 0.
\]
For any projective birational morphism $p\colon X'\to X$ with $X'$ reduced the image
$\im(\tilde p^*(\phi))$ is the strict transform of the image $\im (\phi)$. So
by~\cite[Thm.~5.2.2]{GR} we can find a $p$ such that the former image is flat over $X'$.
Write $\mathbb V'=(\sV',\nu')$ for $\tilde p^*\mathbb V$. By the same technique we can
assume that additionally $\coker (\nu')^{k-1}$ has Tor-dimension $\le 1$ over $X'$ in case
$k>1$. This implies that $\ker (\nu')^{k-1}$ and $\im (\nu')^{k-1}$ have Tor-dimension
$\le 1$ over $X'$. So by (ii) the
formation of the image and the kernel of $(\nu')^{k-1}$ is
compatible with pullback along a further modification of $X'$.

Let $\mathbb W=(\ker(\nu')^{k-1} , \nu'|_{\ker(\nu')^{k-1}}) $. By our induction assumption in case
$k>1$  we can replace $X'$ by a further modification such  that afterwards there exists a finite resolution $\mathbb W_*\to
\mathbb W$ with $\mathbb W_i \in \Nil(Y')$ for all $i\ge 0$. By the analog
of~\cite[Lem.~6]{KerzStrunk16} we observe that  $\im (\nu')^{k-1}$ has finite
Tor-dimension over $Y'$. So standard techniques allow us to construct a finite, locally
free resolution of  $\im (\nu')^{k-1}$ which can be spliced to $\mathbb W_*$ to provide the
requested resolution $\mathbb V_*$ of $\mathbb V'$.

The proof of (iv) also relies on~\cite[Thm.~5.2.2]{GR} and is similar to  the proof of (iii).
\end{proof}

\begin{proof}[Proof of Proposition~\ref{subsec.cdh.prop2}]
Let $\mathfrak I$ be the cofiltered category of reduced schemes which are projective and
birational over $X$. We consider the Zariski--Rie\-mann type locally ringed space
\[
 \frY= \lim_{X'\in \mathfrak I} Y\times_X X'.
\]
The limit is taken in the category of locally ringed topological spaces. We use standard 
properties of sheaves of $\sO_\frY$-modules as presented in~\cite[Sec.~0.4.2]{FK} without
further explanation. Beside those we also use the following properties:

\begin{itemize}
\item[(i)]  $\sO_\frY$ is coherent.
\item[(ii)] Any $\mathbb V\in \Nil^+(\frY)$ has a finite resolution $\mathbb V_*\to \mathbb
  V$ with
  $\mathbb V_i\in \Nil(\frY)$ for all $i\ge 0$.
\end{itemize}

In order to prove (i) consider an $\sO_\frU$-linear map $\mathfrak f \colon \sO_\frU^n \to\sO_\frU$
for some open subspace $\frU$ of $\frY$. We have to show that $\ker(\mathfrak f)$ is of finite
type. Without loss of generality $\frU=\frY$ and $\mathfrak f$ is induced by a morphism
$\phi\colon\sO_{Y}^n\to \sO_Y$. By Lemma~\ref{subsec.cdh.lemplat}(iv) we find a projective
birational morphism $p\colon X'\to X$ with $X'$ reduced such that $\coker(\tilde p^*(\phi))$ has Tor-dimension $\leq 1$
 over $X'$. After shrinking $Y'$ around a given point there exists a surjection $\psi\colon \sO_{Y'}^m\to \ker(\tilde p^*(\phi))$. Now
Lemma~\ref{subsec.cdh.lemplat}(ii) implies that the pullback of $\psi$ to $\frY$ induces a
surjection $\sO_{\frY}^m\to \ker(\mathfrak f) $. 

Using a similar argument (ii) can be deduced from parts (i) and (iii) of Lemma~\ref{subsec.cdh.lemplat}.

We now study $K$-theory of $\frY$.
For any $Y'=Y\times_X X'$ with $X'\in \mathfrak I$ 
\[ 
NK_{i+1}(Y')\cong \coker( K_i(\Vect(Y'))\to  K_i(\Nil(Y'))    )
\]
by \cite[Thm.~V.8.1]{Weib13}. As $K$-theory commutes with filtered colimits of exact
categories we get
\eq{subsec.cdh.eq61}{
K_i(\Vect(\frY))\cong \colim_{Y'} K_i(\Vect(Y'))  \text{ and } K_i(\Nil(\frY))\cong \colim_{Y'}  K_i(\Nil(Y')). 
}
In order to prove Proposition~\ref{subsec.cdh.prop2} we have to show that the left group
surjects onto the right group in~\eqref{subsec.cdh.eq61}.
From (ii) and the resolution theorem~\cite[Thm.~V.3.1]{Weib13}
we deduce that 
\[
K_i(\Vect(\frY)) \cong K_i(\Coh(\frY))  \quad\text{and}\quad K_i(\Nil(\frY)) \cong K_i(\Nil^+(\frY)).
\]
Devissage~\cite[Thm.~V.4.1]{Weib13} tells us that 
\[
K_i(\Coh(\frY)) \cong  K_i(\Nil^+(\frY)).
\]
These results together finish the proof of Proposition~\ref{subsec.cdh.prop2}.
\end{proof}

\begin{proof}[Proof of Theorem~\ref{subsec.cdh.thm}]
The ordinary sheafification functor $\ra_\cdh$ from pre\-sheaves of sets to sheaves of sets on
${\rm Sch}_X^\cdh$ preserves finite limits and small colimits. As the $\infty$-topos ${\rm Sh}({\rm Sch}_X^\cdh)$ is
hypercomplete we have to show that 
\begin{align}\label{subsec.cdh.eq01}
\ra_\cdh K_i &= 0  &\text{for } i<0,  \\ \label{subsec.cdh.eq02}
\ra_\cdh K_i &\xrightarrow{\cong} \ra_\cdh KH_i  &\text{for } i\in \Z.  
\end{align}
In order to verify~\eqref{subsec.cdh.eq01} we show by induction on $\dim(Y)$ that the map
\begin{equation}\label{subsec.cdh.eq03}
K_i(Y) \to \ra_\cdh K_i (Y)
\end{equation}
vanishes for any affine $Y\in {\rm Sch}_X$ and $i<0$. Without loss of generality $Y$ is reduced. Consider
$\xi \in K_i(Y)$. By~\cite[Prop.~5]{KerzStrunk16} there is a projective birational
morphism $\phi\colon Y'\to Y$ such that $\phi^*(\xi)=0$. Let $Z\to Y$ be a closed subscheme such that
$\phi$ is an isomorphism over $Y\setminus Z$ and such that $\dim(Z)<\dim(Y)$.
Then
the image of $\xi$ in $\ra_\cdh K_i(Z)$ vanishes by our induction assumption. As $Y'\coprod Z \to Y$ is a cdh-covering we deduce
that $\xi$ lies in the kernel of~\eqref{subsec.cdh.eq03}.

In order to verify~\eqref{subsec.cdh.eq02} we apply $\ra_\cdh$ to the spectral
sequence~\cite[Thm.~IV.12.3]{Weib13} relating $K$ and $KH$. The weak convergence of this
spectral sequence involves only kernels, cokernels and small filtered colimits, so we get a weakly convergent spectral sequence
\[
E^1_{p q} = \ra_\cdh N^p K_q  \Rightarrow \ra_\cdh KH_{p+q}.
\]
in the category of abelian sheaves on ${\rm Sch}_X^\cdh$.

In analogy with the proof of~\eqref{subsec.cdh.eq01} we deduce from Proposition~\ref{subsec.cdh.prop2}  that $E^1_{p q}=0$ for all $p>0$, $q\in \Z$.
\end{proof}

\appendix

\section{Homology of monoids}\label{ex.sec.hom}

In this appendix we present some background material on the homology of simplicial
monoids, which plays an important role in the proof of
Proposition~\ref{ex.prop.trivac}.
  For a discrete  monoid $G$ and a set $X$ on which $G$ acts let
$\mathbf E_X G$ be the action category of $G$ on $X$, i.e.\ the objects are the elements
of $X$ and 
\[\Hom_{\mathbf E_X G}(x,y)= \{g\in G \, |\, g(x)=y  \}.  
\]
Let the simplicial set $\rE_X G$ be the nerve of $\mathbf E_X G$. For a
simplicial monoid $G$ acting on the simplicial set $X$ we can perform this construction
degreewise and take the diagonal to get a simplicial set $\rE_X G$. We write $\rE G= \rE_G
G$. Note that $\rE_{\{*\}}G = \rB G$. Since in the discrete case $\mathbf E_G G$ has an initial object, $\rE
G$ is contractible for any simplicial monoid $G$.

Concretely, $\rE G$ is given by
\[
[n] \mapsto G^{n+1} , \quad
d_i(g_0 , \ldots ,g_n ) =  \begin{cases} 
   ( g_0,  \ldots , g_{n-i-1}g_{n-i}, \dots, g_{n} )  & \text{if } i < n \\
   (g_1, \ldots ,   g_n)      & \text{if } i=n.
  \end{cases} 
\]

 For a simplicial $G$-module $M$ we write $C(G;M)$ for
the simplicial abelian group $\Z \rE G\otimes_{\Z G} M $  and $H_i(G;M)$ for $\pi_i
C(G;M)$. If $M=\Z$ we write $C(G)$ for $C(G;\Z )$ and $H_i(G)$ for $H_i(G;\Z)$.

Note that if $G$ and $M$ are discrete
\[
H_i(G; M) = {\rm Tor}^{\Z G}_i ( \Z , M ),
\]
as $\Z \rE G$ is a $\Z G$-free resolution of $\Z$.
If $G_1$ and $G_2$ are simplicial monoids there is a canonical isomorphism
\begin{equation}\label{ex.eqdecp}
C(G_1 \times G_2) \cong C(G_1) \otimes_\Z C(G_2).
\end{equation}

 In the proof of Proposition~\ref{ex.prop.trivac} we
use some  facts about the homology of a semi-direct product $G \ltimes H$, where $G$ is a
simplicial monoid acting on the simplicial group $H$ from the left. The
multiplication on $G \ltimes H$ is given by 
\[
(g,h) , (g',h') \mapsto (g g',h g(h')).
\]
Note that the relative homology $H_i(G \ltimes H, G)$ is given by the cokernel of the (split) injection
\[
 H_i(G) \to H_i(G \ltimes H) .
\]

\begin{prop}\label{ex.prop.hosemdi}
Assume $G$ is a
group-like simplicial monoid acting on the simplicial group  $H$. 
Then we get a canonical
isomorphism 
\begin{equation}\label{ex.semproco}
 H_i(G; C(H))\xrightarrow{\cong}   H_i(G \ltimes H) .
\end{equation}
\end{prop}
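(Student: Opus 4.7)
The strategy is to realize $\rB(G \ltimes H)$ as the quotient of $\rE G \times \rE H$ by a twisted $G \ltimes H$-action, and then compute its chain complex by taking coinvariants in two stages corresponding to the extension $1 \to H \to G \ltimes H \to G \to 1$.

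First, I would equip $\rE G \times \rE H$ with the simplicial $G \ltimes H$-action
\[
(g, h) \cdot (x, y) = (gx,\ h \cdot g(y)),
\]
where $g(y)$ denotes the action of $g \in G$ on $y \in \rE H$ induced by the $G$-action on $H$. A direct computation verifies that this is a well-defined simplicial action. Since $\rE H$ is always weakly contractible and $\rE G$ is weakly contractible by the group-likeness of $G$ (see \cite[Prop.~1.5]{Segal74}), the product is weakly contractible. One checks that the action is free at each simplicial degree: freeness of the $G$-action on $\rE G$ forces $g = e$, and then freeness of the $H$-action on $\rE H$ forces $h = e$. Therefore $(\rE G \times \rE H)/(G \ltimes H)$ is a model for $\rB(G \ltimes H)$, so the simplicial abelian group $\Z(\rE G \times \rE H) \otimes_{\Z(G \ltimes H)} \Z$ has the same homotopy type as $C(G \ltimes H)$.

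Next, I would compute the coinvariants in two steps. Because $H$ acts only on the $\rE H$ factor, taking $H$-coinvariants gives
\[
\Z(\rE G \times \rE H) \otimes_{\Z H} \Z \cong \Z \rE G \otimes_{\Z} C(H),
\]
and the residual $G$-action is diagonal: translation on $\rE G$ and the induced action on $C(H)$ coming from the $G$-action on $H$. Applying the standard identification of diagonal coinvariants with a balanced tensor product then gives
\[
\bigl( \Z \rE G \otimes_{\Z} C(H) \bigr) \otimes_{\Z G} \Z \cong \Z \rE G \otimes_{\Z G} C(H) = C(G; C(H)).
\]
Passing to homotopy groups yields the desired canonical isomorphism $H_i(G; C(H)) \xrightarrow{\cong} H_i(G \ltimes H)$.

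The main technical point is the verification of the twisted $G \ltimes H$-action in the first step: that it is genuinely simplicial (compatible with all face and degeneracy maps) and is levelwise free on a contractible simplicial set. The group-likeness of $G$ enters essentially here, via the contractibility of $\rE G$. Once this is in place, the remainder of the argument is a formal manipulation with coinvariants using the normal subgroup $H \triangleleft G \ltimes H$ with quotient $G$.
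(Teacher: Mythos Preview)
Your approach is a different one from the paper's, and it contains a genuine gap that stems precisely from the fact that $G$ is only a group-like simplicial \emph{monoid}, not a simplicial group.

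The paper does not pass through a free $G\ltimes H$-action on a contractible space. Instead it observes that $H_i(G;C(H))=\pi_i\,\Z\,\rE_{\rB H}G$, constructs an explicit morphism of bisimplicial sets $\phi\colon \rE_{\rB H}G\to \rB(G\ltimes H)$, and shows that in each degree $n$ the map $\phi_n$ is a weak equivalence of simplicial sets. The crucial input is Lemma~\ref{ex.eqlemsimp}: the shearing map $G\ltimes H\to G\ltimes H$, $(g,h)\mapsto(g,g(h))$, is a weak equivalence when $G$ is group-like. This is where group-likeness enters, and it replaces the need for any freeness statement.

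In your argument the difficulties are these. First, you assert that $\rE G$ is contractible \emph{because} $G$ is group-like; in fact, as noted in the appendix, $\rE G$ is contractible for any simplicial monoid, so your identification of where the hypothesis is used is off. Second, and more seriously, the two algebraic steps you rely on both implicitly invert elements of $G$. A natural \emph{left} $G$-action on $\rE G$ compatible with the paper's face maps is not available without inverses, and the ``freeness'' of the $G$-action on $\rE G$ levelwise fails for monoids in general; so the principle ``contractible with free action has quotient $\rB$'' does not apply. Likewise, the ``standard identification of diagonal coinvariants with a balanced tensor product'',
\[
\bigl(\Z\,\rE G\otimes_\Z C(H)\bigr)\otimes_{\Z G}\Z\;\cong\;\Z\,\rE G\otimes_{\Z G} C(H),
\]
requires turning a left $G$-action into a right one via $g\mapsto g^{-1}$, which is unavailable here. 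If $G$ were a simplicial group your proof would go through essentially as written; for a group-like monoid the missing ingredient is exactly the shearing equivalence the paper supplies in Lemma~\ref{ex.eqlemsimp}.
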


\begin{proof}
In this proof we treat $\rE_{\rB H} G $ and $\rB (G\ltimes H)$ as bisimplicial sets.
We write $(\rE_{\rB H} G)_n$ for the simplicial set obtained by applying the $n$-th simplicial degree of the $\rE$-construction to the simplicial monoid $G$ and the simplicial $G$-set $\rB H$, and similarly for $\rB( G \ltimes H)$.
We consider the  morphism of bisimplicial sets
\begin{equation}
\phi\colon \rE_{\rB H} G  \xrightarrow{\simeq} \rB (G\ltimes H)
\end{equation}
defined as follows.
Consider $w=(g_0,\ldots , g_{n-1}, h_0, \ldots , h_{n-1} )\in (\rE_{\rB H} G)_n$ and let the
$i$-th component of $\phi(w)$ be $(g_i,g_i  \cdots g_{n-1}(h_i))$.
Note that
\[
 H_i(G; C(H)) = \pi_i \Z  \rE_{\rB H} G \quad \text{ and } \quad H_i(G \ltimes H)=\pi_i\Z \rB (G\ltimes H).
\]

We claim that for each $n\ge 0 $ the map $\phi_n \colon (\rE_{\rB H} G)_n \to \rB(G\ltimes
H)_n$ is an equivalence of simplicial sets, so that the diagonal of $\phi$ is also an
equivalence  \cite[Prop.~IV.1.9]{GJ}.
As $\phi_n$ is built out of the map $G\ltimes H \to G\ltimes H$ with
$(g,h)\mapsto (g,g(h))$, which is an equivalence by Lemma~\ref{ex.eqlemsimp} below, the claim follows.
\end{proof}

\begin{lem}\label{ex.eqlemsimp}
For a group-like simplicial monoid $G$  acting on the simplicial group  $H$ from the left
the map
\[
\psi\colon  G\ltimes H \to G\ltimes H, \quad (g,h)\mapsto (g,g(h))
\]
is an equivalence.
\end{lem}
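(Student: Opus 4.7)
The plan is to forget the monoid structures and regard $\psi$ as a map of simplicial sets $G \times H \to G \times H$ lying over the first-factor projection $G \times H \to G$. Since $H$ is a simplicial group, it is Kan, so this projection is a Kan fibration with fiber $H$, and $\psi$ becomes a morphism of such Kan fibrations over the common base $G$. By the standard fiberwise criterion (the five-lemma applied to the long exact sequences of homotopy groups of the geometric realizations of the two fibrations), it will be enough to check that $\psi$ restricts to a weak equivalence on the fiber over one vertex in each path component of $G$.

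Over a vertex $g \in G_0$, I will identify the fiber with $H$ via $h \mapsto (s^n g, h)$ in degree $n$, and under this identification $\psi$ restricts to the self-map $L_g \colon H \to H$, $h \mapsto (s^n g)\cdot h$, i.e.\ the action of $g$ on $H$. It then remains to show that each $L_g$ is a weak equivalence. For this I will use that the adjoint of the action, a simplicial map $G \to \mathrm{Map}(H, H)$, sends any $1$-simplex $\alpha \in G_1$ to a simplicial homotopy between $L_{d_0\alpha}$ and $L_{d_1\alpha}$; consequently the homotopy class of $L_g$ depends only on $[g] \in \pi_0(G)$.

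Since $G$ is group-like, $\pi_0(G)$ is a group, and I can choose $\bar g \in G_0$ with $[\bar g] = [g]^{-1}$. Then $g\bar g$ and $\bar g g$ both lie in the component of the unit $e \in G_0$, and by multiplicativity of the action one has $L_g \circ L_{\bar g} = L_{g \bar g}$ and $L_{\bar g} \circ L_g = L_{\bar g g}$, each of which is therefore homotopic to $L_e = \mathrm{id}_H$. This exhibits $L_{\bar g}$ as a two-sided homotopy inverse of $L_g$, completing the argument. The only slightly delicate point will be the appeal to the fiberwise criterion, but once $L_g$ is known to be a homotopy equivalence for every vertex $g$, this follows from the standard comparison of long exact sequences of homotopy groups of the two fibrations on each component of $|G|$; everything else is bookkeeping around the group-like hypothesis.
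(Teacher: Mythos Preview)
Your proof is correct and follows essentially the same strategy as the paper's: view $\psi$ as a map of Kan fibrations over $G$, identify the fiber map over a vertex $g$ with the action $L_g$ of $g$ on $H$, and use the group-like hypothesis to show each $L_g$ is a homotopy equivalence (the paper phrases this last step as ``reducing to the identity component'', which amounts to your construction of a two-sided homotopy inverse $L_{\bar g}$). You spell out the adjoint-map argument and the homotopy inverse more explicitly than the paper does, but the underlying argument is identical.
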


\begin{proof} 
The projection $\pi\colon G\ltimes H \to G$ is a fibration \cite[Cor.~V.2.6]{GJ}, so by
\cite[Lem.~IV.5.2]{GJ}, it suffices to show that for each $0$-simplex
$\sigma\colon \Delta^0\to G$ the map $\psi$ induces an equivalence on the fibre $\psi_\sigma\colon   (G\ltimes
H)_\sigma \to (G\ltimes H)_\sigma$.  Note that $\psi_\sigma $ is isomorphic to the action
of $\sigma$ on $H$, which is clearly homotopic to the identity if $\sigma$ is in the
connected component of the identity of $G$. But as $\pi_0(G)$ is a group we can reduce to the
latter case.
\end{proof}

\begin{defn}\label{ex.def.homoto}
  Let $C_1$ and $C_2$ be simplicial abelian groups with an action of a simplicial monoid
  $G$, let $m\ge 0$. An \emph{$m$-homotopy} between two $G$-equivariant morphisms
  $\phi_0, \phi_1\colon C_1\to C_2$ is a $G$-equivariant morphism of simplicial abelian groups
  $h\colon C_1\otimes_\Z \Z \Delta^1 \to C_2$ with $\phi_0=h_{C_1\times \{0 \} }$ and
  $\phi_1=h_{C_1\times \{ 1 \} }$ in simplicial degrees $\le m$ .

We say that a $G$-equivariant morphism of simplicial groups  $\phi\colon H_1\to H_2$ is \emph{homologically $m$-constant} if \[ C(\phi)\colon C(H_1)\to C(H_2) \] is  $G$-equivariantly
$m$-homotopic
in the above sense to $C(e)$, where $e\colon H_1\to H_2$ is the constant morphism.
\end{defn}

As an immediate consequence of Proposition~\ref{ex.prop.hosemdi} we obtain:

\begin{lem}\label{lem.vanreho}
Let $G$ be a group-like simplicial monoid.
If $\phi\colon H_1\to H_2$  is a $G$-equivariant morphism of simplicial 
  groups which  is homologically $m$-constant, then the map
\[
H_i(G\ltimes H_1 , G) \xrightarrow{\phi_*} H_i(G\ltimes H_2 , G)
\]
vanishes for $i\le m$.
\end{lem}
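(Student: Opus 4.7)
My plan is to transport the whole statement across the isomorphism of Proposition~\ref{ex.prop.hosemdi}, so that the lemma becomes a statement about maps induced by $C(\phi)$ and $C(e)$ on $H_i(G;-)$, and then to exploit the fact that $C(e)$ factors through $\Z$ on a reduced version of the coefficient complex.

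First I would verify that under the natural isomorphism~\eqref{ex.semproco} the splitting given by the inclusion $G \hookrightarrow G\ltimes H_k$, $g\mapsto (g,1)$, and its retraction $G\ltimes H_k \to G$, corresponds to the splitting of $H_i(G;C(H_k))$ induced by the unit $\Z = C(\{1\}) \to C(H_k)$ and the augmentation $C(H_k) \to C(\{1\}) = \Z$. This is immediate from the shape of the comparison map constructed in the proof of Proposition~\ref{ex.prop.hosemdi} (both are induced by the inclusion $\{1\} \hookrightarrow H_k$ of the basepoint and its terminal map). Writing $\tilde C(H_k) := \coker(\Z \to C(H_k))$ for the reduced complex, this yields a natural splitting
\[
H_i(G\ltimes H_k) \;\cong\; H_i(G) \,\oplus\, H_i(G; \tilde C(H_k)),
\]
so the relative homology $H_i(G\ltimes H_k, G)$ is naturally identified with $H_i(G; \tilde C(H_k))$, and $\phi_*$ on relative homology is the map induced by $C(\phi)$ after projection to these reduced coefficients.

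Second, the constant morphism $e\colon H_1\to H_2$ factors through the trivial group $\{1\}$, so $C(e)$ factors as $C(H_1) \to \Z \to C(H_2)$; in particular, after projection to $\tilde C(H_2)$, the map induced by $e$ is zero. By hypothesis, there is a $G$-equivariant $m$-homotopy $h\colon C(H_1)\otimes_\Z \Z\Delta^1 \to C(H_2)$ between $C(\phi)$ and $C(e)$. Applying $\Z\rE G \otimes_{\Z G}(-)$ and then the quotient projection to the $\tilde C$-summand produces a (still $G$-equivariant) $m$-homotopy between the chain maps on $C(G;\tilde C(H_k))$ induced by $\phi$ and $e$. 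Taking $\pi_i$ for $i\le m$ — where an $m$-homotopy already suffices to enforce equality of induced maps — gives $\phi_* = e_* = 0$ on $H_i(G; \tilde C(-))$, which is precisely the claim. The only point to be careful about is the compatibility of the splittings of the two short exact sequences, which is routine bookkeeping; no real obstacle arises, since everything reduces to Proposition~\ref{ex.prop.hosemdi}.
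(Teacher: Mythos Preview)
Your argument is correct and is exactly the unpacking of what the paper means when it says the lemma is ``an immediate consequence of Proposition~\ref{ex.prop.hosemdi}''. One small slip: after applying $\Z\rE G\otimes_{\Z G}(-)$ the resulting $m$-homotopy is no longer $G$-equivariant (the $G$-action has been quotiented out), but this is irrelevant to the argument; all you need at that stage is an $m$-homotopy of simplicial abelian groups.
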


The following observations about group homology are classical, but as we need functorial
homotopies of simplicial abelian groups,  we  recall them briefly.

\begin{const}[Eilenberg--Zilber]\label{const.eilzil} \mbox{}\\
Suppose we have given the following data.
\begin{itemize}
\item
 $T$ and $H$  discrete groups.
\item
 $\phi,\phi'\colon T\to H$ group
homomorphisms with commuting images. 
\item
 An $m$-homotopy between $C(\phi)$ and $C(e)$ and an $m'$-homotopy between $C(\phi')$ and
 $C(e)$, where $e$ denotes the constant homomorphism and  $m,m'\ge 0$.
\end{itemize}

\noindent
Then there exists an $(m+m'+1)$-homotopy between
\[
C(\phi\cdot \phi') \quad\text{and}\quad C(\phi) +C( \phi')\colon  C(T) \to C(H)  . 
 \]
\end{const}

Construction~\ref{const.eilzil} is canonical and functorial in the given data.
In order to show the existence of the desired homotopy, we assume for simplicity of
notation that $H$ is abelian. It is  sufficient to construct an $(m+m'+1)$-homotopy between 
\[
C(\phi\times \phi') \quad \text{and}\quad C(\phi\times e) +C(e\times \phi')\colon 
C(T\times T) \to C(H\times H)
 \]
by composition with  multiplication and diagonal. 
In view of~\eqref{ex.eqdecp} and as $C(\phi)$ respectively $C(\phi')$ are canonically homotopic
(depending on the input data) to  maps which are equal to $C(e)$ in degrees $\le m$ respectively 
$\le m'$ it is sufficient to observe the following fact.

Writing $C=C(T)$, $D=C( H)$, $\varphi=C(\phi)$ and $\varphi'=C(\phi')$ the induced maps of
normalized complexes $N \varphi\colon NC\to ND$ and $N \varphi'\colon NC\to ND$ vanish
in degrees $[1,m]$ respectively $[1,m']$. We claim that the morphisms
\[
\varphi \otimes \varphi' \quad\text{and}\quad \varphi\otimes e +e\otimes \varphi'\colon
C\otimes_\Z C \to D\otimes_\Z D  
 \]
are canonically $(m+m'+1)$-homotopic. The requested $(m+m'+1)$-homotopy for the associated
normalized complexes is defined as the canonical homotopy
\begin{align*}
N (\varphi \otimes \varphi')& \simeq  \nabla \circ ( N \varphi \otimes N\varphi') \circ
{\rm AW}  \\
& = \nabla \circ (   N \varphi\otimes e +e\otimes N\varphi' )\circ {\rm AW}  \quad
  \text{(in  ${\rm deg}\le m+m'+1$)}
 \\
& \simeq   N (\varphi\otimes e) +N( e \otimes \varphi'). 
\end{align*}
Here $\nabla$ is the shuffle map and $\rm AW$ is the Alexander--Whitney map,
see~\cite[2.3]{SchwedeShipley03} for an overview and \cite[Sec.~2]{EM} for a detailed
construction of these canonical homotopies.

\begin{const}[Conjugation invariance]\label{ex.const.conj} \mbox{}\\
Suppose we have given the following data.
\begin{itemize}
\item A discrete group $H$.
\item An element $u\in H$ with an associated conjugation automorphism $\phi_u\colon H \to H$, $h\mapsto u^{-1} h  u$.
\end{itemize}

\noindent
Then there exists a homotopy between
\[
C({\rm id}_H) \quad\text{and}\quad C( \phi_u )\colon C(H) \to C(H).
\]
\end{const} 

Construction~\ref{ex.const.conj} is canonical and functorial in the pair $(H,u)$. It is
sufficient to observe that $u^{-1}$ represents a natural
transformation between the endo-functors ${\rm id}_H$ and $\phi_u$  of the groupoid
$\mathbf E_{\{ * \}}H$. This natural transformation gives rise to a homotopy of self-maps of the
classifying space $\rB H$. The induced homotopy on $C(H)=\Z \rB H$ is the requested one.

\bibliographystyle{amsalpha}
\providecommand{\bysame}{\leavevmode\hbox to3em{\hrulefill}\thinspace}
\providecommand{\href}[2]{#2}

\end{document}